\providecommand{\abs}[1]{\left|#1\right|}
\providecommand{\norm}[1]{\left \| #1\right \|}
\newtheorem{proposition}{Proposition}[section]
\newtheorem{theorem}[proposition]{Theorem}
\newtheorem{lemma}[proposition]{Lemma}
\newtheorem{remark}[proposition]{Remark}
\numberwithin{equation}{section}
\newcommand{\R}{\mathbb{R}}
\newcommand{\Rcal}{\mathcal R}
\newcommand{\eps}{\varepsilon}
\newcommand{\dys}{\displaystyle}
\title[Symmetric Positive solutions to nonlinear Choquard Equations with potentials]{Symmetric Positive solutions to nonlinear  Choquard Equations 
with potentials }
\date{}
\author{Liliane Maia}
\address[L. Maia]{Departamento de Matem\'atica, 
Universidade de Bras\'\i lia, 70910-900 Bras\'\i lia, Brazil.}
\email[L. Maia]{lilimaia@unb.br}
\author{Benedetta Pellacci}
\address[B. Pellacci]{Dipartimento di Matematica e Fisica,
Universit\`a della Campania  ``Luigi Vanvitelli'',  via A. Lincoln 5, 81100
Caserta, Italy.}
\email[B. Pellacci]{benedetta.pellacci@unicampania.it}
\author{Delia Schiera}
\address[D. Schiera]{Dipartimento di Matematica e Fisica,
Universit\`a della Campania  ``Luigi Vanvitelli'',  via A. Lincoln 5, 81100
Caserta, Italy.} 
\email[D. Schiera]{delia.schiera@unicampania.it}
\subjclass[2010]{ 45K05, 35J60, 35J91, 35J20. }
\keywords{
Choquard equations with potentials, Nonlocal nonlinearities, Positive symmetric solutions, Nehari manifold.}
\thanks{Research partially supported by:   PRIN-2017-JPCAPN Grant: ``Equazioni 
differenziali alle derivate parziali non lineari'',
by project Vain-Hopes within the program VALERE: VAnviteLli pEr la RicErca and by the INdAM-GNAMPA group. 
L.\ Maia was partially supported by FAPDF, CAPES, and CNPq grant 309866/2020-0.}
\begin{document}

\begin{abstract}Existence results for a class of Choquard equations with 
potentials  are established.  
The potential has  a limit at infinity and it is taken invariant under the action of a closed subgroup of linear isometries of $\mathbb{R}^N$. As a consequence,
the positive solution found  will be invariant under the same action. 
Power nonlinearities with exponent  greater or equal than two or less than two will be handled. Our results  include the physical case. 
\end{abstract}

\maketitle

\section{Introduction}
This paper is devoted to the study of existence results for the  following Choquard equation
\begin{equation}\label{Choquardeq}\tag{$P_V$}
\begin{cases}
-\Delta u + V(x) u =  (I_{\alpha} \ast \left| u \right| ^{p}) \left| u \right| ^{p-2} u ,
& x\in \mathbb{R}^N, 
\\
\hskip2.1cm u\in H^{1}(\R^{N}), &
\end{cases}\end{equation}
with  $N\geq 2 $, $ \alpha<N$,   and $I_{\alpha}$ represents the Riesz potential of order $\alpha$, defined for every point $x \in \mathbb{R}^N \setminus \{ 0 \}$ by
\[ 
I_{\alpha}(x)=\frac{A_{\alpha}}{\abs{x}^{N-\alpha}}, \quad \text{where } \quad A_{\alpha}=\frac{\Gamma(\frac{N-\alpha}{2})}{\Gamma(\alpha/2) 2^{\alpha} \pi^{N/2}},
\]
where $\Gamma$ denotes the Gamma function (see \cite{Riesz}).
The exponent $p$ lies in the range
\begin{equation}\label{range}
\frac{N-2}{N+\alpha} < \frac1p < \frac{N}{N+\alpha},
 \end{equation}
and  the potential $V$ satisfies
\begin{equation}\label{V1} 
V \in C^0(\R^{N}), \quad \inf_{x \in\mathbb{R}^N} V(x) >0, \quad\text{ and  } \lim_{\abs{x} \to \infty} V(x) =V_{\infty}>0.
\end{equation}
 This partial differential equation arises in several physical models;   it has been introduced 
in \cite{Pekar} (see also \cite{Lieb}) in the context of quantum mechanics,  and it also 
corresponds to the stationary case associated to the nonlinear Hartree equation  (see e.g. \cite{Lions} and for further references see \cite{MorozVanSchaftJFPTA}).

The condition on the exponent $p$  and the  Hardy-Littlewood-Sobolev inequality
implies that the right hand side on \eqref{Choquardeq} is well defined  for every $u\in H^{1}(\R^{N})$,  so that under \eqref{V1} any solution turns out to be a critical point of the action functional
\begin{equation}\label{eq:defI}
\mathcal{I}_V(u)=\frac{1}{2} \int_{\mathbb{R}^N}(\abs{\nabla u}^2 + V(x)u^2) - \frac{1}{2p} \int_{\mathbb{R}^N} (I_{\alpha} \ast \abs{u}^p) \abs{u}^p . 
\end{equation}
When $V(x)\leq V_{\infty},\, V\not\equiv V_{\infty}$, the existence of a least action 
solution, corresponding to a  minimum point of  $\mathcal{I}_V$ on 
\[
{\mathcal N}_{V}:\left\{u\in H^{1}(\R^{N})\setminus \{0\} : \langle \mathcal{I}'_V(u),u\rangle=0\right\}
\]
has been first proved in \cite{Lions} by means of the well-known concentration compactness method (see also \cite{MorozVanSchaftJFPTA, MorozVanSchaftJDE, VanShaftXia}).

But, when $V(x)$ approaches $V_{\infty}$ from above, or oscillating, 
the search of a minimum point on ${\mathcal N}_{V}$ is useless and   higher action level
solutions have to be sought;  this immediately requires a deep study of the  possibile
lack of compactness of a bounded Palais-Smale sequence due to the unboundedness of the 
domain.
 In this path,  versions of the well-known Splitting Lemma, firstly introduced by
\cite{struwe},  are of great help as compactness results, as
this  tool furnishes compactness at quantizied energy intervals 
whenever the so-called problem at infinity 
\begin{equation}\label{Choqlimit}\tag{$P_{\infty}$}
\begin{cases}
-\Delta u + V_{\infty} u =  (I_{\alpha} \ast \left| u \right| ^{p}) \left| u \right| ^{p-2} u & \text{in } \mathbb{R}^N, 
\\
\hskip1.8cm u\in H^{1}(\R^{N}), &
\end{cases}
\end{equation}
 has a unique positive solution. As a consequence, a good knowledge 
concerning existence and uniqueness properties of \eqref{Choqlimit}
turns out to be a cornerstone of the research  of bound states of \eqref{Choquardeq}.
The  existence of  positive solutions to this autonomous problem
goes back to \cite{Lieb} and \cite{Lions}, and  these results have been extended in  \cite{MorozVanSchaftJFA}, where it is shown 
that the problem \eqref{Choqlimit} has a positive radially symmetric least action solution $\omega \in C^2(\mathbb{R}^N)$ for any $p$ satisfying \eqref{range}.

Besides, the uniqueness property  is known for \eqref{Choqlimit} if $p=2$, $\alpha=2$ and $N=3, 4, 5$, (see \cite{Lieb, MaZhao, wangyi} and  \cite{Xiang} for a generalization). Consequently,  in this range of the parameters one can exploit the above  approach to get solutions for \eqref{Choquardeq} and we have followed this strategy in  \cite{MaPeScProc} (see also \cite{WangQuXiao} where a non-autonomous case has been studied for $N=3$ and $\alpha=p=2$).

On the other hand, one can exploit symmetric properties of the potentials $V$ in order to
look for critical points of ${\mathcal I}_{V}$ enjoying the same kind of symmetry,
by minimizing ${\mathcal I}_{V}$ on a symmetric ${\mathcal N}_{V}$.

The introduction  of symmetry into play naturally increases the least action level
and, at the same time, allows to construct a Palais-Smale sequence which is also a 
minimizing one.
Then, the key point becomes again proving that a ``symmetric''  minimizing level lies 
in a range where compactness properties hold. In order to do this, one can build a 
suitable  competitor making use of $\omega$, a least action solution of  
\eqref{Choqlimit}.
More precisely,  consider  the action of  $G$ a closed subgroup of linear isometries of $\R^N$ and set
\[ 
\ell(G)=\min \left\{ \# Gx : \, x \in \mathbb{R}^N \setminus \{ 0 \} \right\}
 \]
where $\# Gx$ is the cardinality of the $G$-orbit of $x$, 
(see   \eqref{def ell} in Section \ref{setting}). Then, 
a good competitor is made of  a sum of suitable translation of 
$\omega$  centred in points that are  far away from each others. 
So that, in order to estimate the functional, decay properties of $\omega$ are required.

When studying the nonlinear Schr\"odinger equation following this strategy, the exponential decay of the least action solution of the corresponding problem at 
infinity plays a crucial role (see e.g. \cite{BahriLi});
since, due to this fast  decay one can  split  the action level of this competitor 
into a suitable  multiple of the least action level of  \eqref{Choqlimit}.

In the case of \eqref{Choqlimit} the decay of  $\omega$ is  of exponential type 
when $p\geq 2$, and existence results
of  symmetric solutions have been proved in \cite{clasal} under suitable hypotheses on the group of  symmetries $G$.

But, when $p<2$ one sees the real non-locality feature of \eqref{Choqlimit} as the 
nonlinearity in this case is not, roughly speaking,  ``focusing enough'' and the decay
of $\omega$ is not exponential any more but it is actually of polynomial type (see \cite{MorozVanSchaftJFPTA,  MorozVanSchaftJDE} for a comprehensive discussion on this topic).

Our  contributions in this paper are twofold. 
First of all, we succeed in proving existence  results in the case $p<2$ by 
performing the above mentioned asymptotic analysis  even when $\omega$ 
decays polynomially and under assumptions on the group of symmetries $G$ weaker than the one in \cite{clasal}.

In particular, our existence result for $p<2$ is the following one.
\begin{theorem}\label{mainthm1}
Let $G$ be a closed subgroup of the linear isometries of $\R^{N}$, with  $\ell(G) \ge 2$, and finite.
Assume that the exponent $p$ is such that
\begin{equation}\label{pless2}
\frac{N+\alpha}N<p<2
\end{equation}
and that the potential $V(x)$ satisfies  \eqref{V1} and 
\begin{equation}\label{Vless2}
V(x) \le V_{\infty} + A_0 (1+\abs{x})^{-\beta}, \quad \forall  x \in \R^N, \text{ with } A_0 >0, \text{ and } \beta > \frac{N-\alpha}{2-p}. 
\end{equation}
Then, if  $V$ is $G-$invariant,  Problem \eqref{Choquardeq} has a 
positive $G-$invariant solution.
\end{theorem}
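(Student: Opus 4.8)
The plan is to realize the solution as a minimizer of $\mathcal{I}_V$ on the symmetric Nehari manifold and to recover compactness via a Struwe-type splitting lemma, the symmetry raising the compactness threshold to a multiple of the energy of the problem at infinity. I work throughout in the subspace $H^1_G(\R^N)$ of $G$-invariant functions; by the principle of symmetric criticality, a critical point of the restriction of $\mathcal{I}_V$ to $H^1_G(\R^N)$ is a genuine weak solution of \eqref{Choquardeq}. Accordingly, set
\[
\mathcal{N}_V^G=\{u\in H^1_G(\R^N)\setminus\{0\}:\langle \mathcal{I}'_V(u),u\rangle=0\},\qquad c_V^G=\inf_{\mathcal{N}_V^G}\mathcal{I}_V,
\]
and denote by $\mathcal{I}_{V_\infty}$, $\mathcal{N}_{V_\infty}$ and $m_\infty=\inf_{\mathcal{N}_{V_\infty}}\mathcal{I}_{V_\infty}$ the corresponding objects for \eqref{Choqlimit}. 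Since \eqref{range} forces $2p>2$, the standard Nehari analysis applies: $\mathcal{N}_V^G$ is a natural constraint, $\mathcal{I}_V$ is bounded below and positive on it, each $u\neq 0$ has a unique Nehari projection $t_u u$, and Ekeland's variational principle produces a minimizing sequence for $c_V^G$ that is a Palais--Smale sequence for $\mathcal{I}_V$ on $H^1_G(\R^N)$, hence on $H^1(\R^N)$.

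The compactness engine is a symmetric splitting lemma: a bounded $G$-invariant Palais--Smale sequence $(u_n)$ at level $c$ decomposes, along a subsequence, as $u_n=u_0+\sum_{j=1}^k w^j(\cdot-y^j_n)+o(1)$ in $H^1(\R^N)$, where $u_0$ solves \eqref{Choquardeq}, each $w^j$ is a nontrivial solution of \eqref{Choqlimit} with $\mathcal{I}_{V_\infty}(w^j)\geq m_\infty$, and $c=\mathcal{I}_V(u_0)+\sum_{j=1}^k\mathcal{I}_{V_\infty}(w^j)$. If a bubble concentrates along centres $y^j_n$ diverging to infinity, then $G$-invariance forces the whole orbit of translates to appear, and as $|y^j_n|\to\infty$ the orbit points separate; since $\# Gx\geq \ell(G)$ for every $x\neq 0$, any nontrivial concentration yields at least $\ell(G)$ bubbles. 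As $\mathcal{I}_V(u_0)\geq 0$, non-compactness therefore entails $c\geq \ell(G)\,m_\infty$. Consequently, once we establish the strict inequality $c_V^G<\ell(G)\,m_\infty$, the minimizing sequence converges strongly to some $u_0\in\mathcal{N}_V^G$ with $\mathcal{I}_V(u_0)=c_V^G$ and $\mathcal{I}_V'(u_0)=0$; replacing $u_0$ by $\abs{u_0}$ (which lowers the energy and preserves membership in $\mathcal{N}_V^G$, the nonlocal term depending only on $\abs{u_0}$) and applying the strong maximum principle yields a positive $G$-invariant solution.

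It thus remains to prove $c_V^G<\ell(G)\,m_\infty$, which is the crux of the argument and the point where \eqref{Vless2} enters. Choose $\xi\in\R^N$ with $\# G\xi=\ell(G)$ and $\abs{\xi}=\rho$ large, let $y_1,\dots,y_{\ell(G)}$ be the orbit $G\xi$ (whose mutual distances are comparable to $\rho$, the isometries being linear), and take as competitor the $G$-invariant function $u_\rho=\sum_{j}\omega(\cdot-y_j)$, estimating $\max_{t>0}\mathcal{I}_V(tu_\rho)$. Writing $\mathcal{I}_V=\mathcal{I}_{V_\infty}+\tfrac12\int_{\R^N}(V-V_\infty)u_\rho^2$ and expanding, the leading contribution is $\ell(G)\,m_\infty$, corrected by: (i) the attractive nonlocal interaction between distinct bubbles, which enlarges $\int_{\R^N}(I_\alpha\ast\abs{u_\rho}^p)\abs{u_\rho}^p$ and so lowers the energy by a quantity of order $\rho^{-(N-\alpha)}$; and (ii) the potential term, bounded above by $\tfrac{A_0}{2}\int_{\R^N}(1+\abs{x})^{-\beta}u_\rho^2$ thanks to \eqref{Vless2}.

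The main obstacle is that for $p<2$ the ground state decays only polynomially, $\omega(x)\sim\abs{x}^{-(N-\alpha)/(2-p)}$, so every correction is of polynomial size and must be compared rate against rate, in contrast with the exponentially small terms available when $p\geq2$. A careful convolution analysis, using that $\omega,\omega^p,\omega^2\in L^1(\R^N)$ (each integrability following from \eqref{range}), shows that the quadratic cross terms are of order $\rho^{-(N-\alpha)/(2-p)}$ while the potential correction decays like $\rho^{-\min\{\beta,\,2(N-\alpha)/(2-p)\}}$. Since $p>1$ gives $2-p<1$ and hence $(N-\alpha)/(2-p)>N-\alpha$, and since the hypothesis $\beta>(N-\alpha)/(2-p)$ makes $\beta$ exceed $N-\alpha$ as well, both of these rates are strictly faster than the interaction rate $N-\alpha$. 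Therefore, for $\rho$ large the negative interaction term dominates, so $\mathcal{I}_V(t_{u_\rho}u_\rho)<\ell(G)\,m_\infty$, whence $c_V^G<\ell(G)\,m_\infty$ and the proof is complete. The delicate step is precisely this matching of polynomial decay rates, the condition on $\beta$ being exactly what guarantees that the potential perturbation is negligible relative to the bubble--bubble interaction.
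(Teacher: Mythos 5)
Your proposal is correct and follows the paper's overall skeleton (minimization on the symmetric Nehari set, compactness below $\ell(G)\,c^G_\infty$ via Proposition 3.1 of Cingolani--Clapp--Secchi, a competitor made of $\ell(G)$ translates of $\omega$ along an orbit, Nehari-quotient expansion), but the mechanism by which you obtain the strict inequality $c_V^G<\ell(G)c^G_\infty$ is genuinely different from the paper's --- and it is both simpler and quantitatively stronger. The paper expands the nonlocal term via Bernoulli-type inequalities, retaining only interactions of the type $\eps_R^{ij}=\int(I_\alpha\ast\omega_{i,R}^p)\omega_{i,R}^{p-1}\omega_{j,R}\sim R^{-\frac{N-\alpha}{2-p}}$ (Lemma \ref{est epsilon}); since the cross terms in $\|\chi_{R,z}\|_V^2$ are of exactly this same order, a delicate bookkeeping with cancellations (Lemma \ref{stime epsilon precise}, Propositions \ref{prop:somma} and \ref{estimatesIV}) is needed to show that a net gain $\tfrac12\sum\eps_{ii}^{ik}$ survives, and the potential must then satisfy $\mathcal{A}_V=o(\eps_R)$, which is precisely where $\beta>\frac{N-\alpha}{2-p}$ enters. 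You instead use superadditivity $(\sum_i a_i)^p\ge\sum_i a_i^p$ (valid as $p>1$) to extract from the nonlinear term the Coulomb-like bubble--bubble interaction
\[
\sum_{i\neq j}\int_{\R^N}(I_\alpha\ast\omega_{i,R}^p)\,\omega_{j,R}^p\;\ge\; c\,R^{-(N-\alpha)},
\]
an elementary lower bound obtained by restricting to unit balls about the two centres, using $\omega^p\in L^1(\R^N)$ (which needs exactly $p>\frac{N+\alpha}{N}$). This term dominates everything else: since $p>1$ gives $\frac{N-\alpha}{2-p}>N-\alpha$, both the norm cross terms and the potential term (of order $R^{-\min\{\beta,\,2\frac{N-\alpha}{2-p}\}}$, by Lemma \ref{lemma cm}) are $o(R^{-(N-\alpha)})$, so the quotient expansion immediately yields $\mathcal{I}_V(T_R\chi_{R,z})\le\ell(G)c^G_\infty-\tfrac{c}{2p}R^{-(N-\alpha)}+o(R^{-(N-\alpha)})$, with no cancellation analysis. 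Notably, the paper's proof of Proposition \ref{prop:somma} discards exactly this term ``by positivity'' on the regions $\Rcal_{kl}$, $k\neq l$, where it is in fact the leading contribution. Your route therefore buys simplicity and a better level estimate, and it even shows that hypothesis \eqref{Vless2} could be relaxed to $\beta>N-\alpha$; the sharpness discussed in Remark \ref{rmk:condizioniottimali} is thus relative to the paper's method, not to the result. Two points you gloss over, neither fatal: (i) for $p<2$ the paper stresses that $\mathcal{I}_V$ is not $C^2$ and invokes the Szulkin--Weth framework to produce a minimizing Palais--Smale sequence, whereas your justification ``$2p>2$'' addresses superquadraticity rather than this smoothness issue (for the homogeneous Choquard nonlinearity the constraint $\langle\mathcal{I}_V'(u),u\rangle=\|u\|_V^2-\int(I_\alpha\ast\abs{u}^p)\abs{u}^p$ is $C^1$ as soon as $\mathcal{I}_V$ is, so your claim stands, but it deserves a sentence); (ii) $\abs{u_0}$ need not lie on $\mathcal{N}_V^G$ a priori, so one should compare $\max_{t>0}\mathcal{I}_V(t\abs{u_0})\le\max_{t>0}\mathcal{I}_V(tu_0)$ and project, as in Lemma \ref{Nehari homeo}.
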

In proving this result  different phenomena, compared to the Schr\"odinger 
equation, appear due to the fact that $p<2$. 
First of all, ${\mathcal N}_{V}$ is not of class $C^{1}$; however, the approach 
introduced in \cite{SzulkinWeth}  can be exploited to deal with this situation.
More relevant are the difficulties arising when studying the term involving the 
nonlinearity. Indeed, classical algebraic inequalities 
as in \cite{BahriLi, clasal} do not apply and the 
 effects of the nonlinearity are spread in a less concentrated region,  so that interactions  between 
two different translations of $\omega$ become relevant even if the centres of translation are
far away from each other (see Lemma \ref{stime epsilon precise}). 
This phenomenon, due to the polynomial decay of $\omega$,  resembles to what 
occur  in other context involving nonlocal operators. 

Once one has dealt with the nonlinearity,
then the integral term involving the potentials has to be compared with
the analogous term in \eqref{Choqlimit}; in this comparison, decay estimates such
as \eqref{Vless2} are useful and, coherently with the decay of $\omega$, we
assume here a polynomial type  decay on the potential $V$; this appears to be
another novelty compared with the nonlinear Schr\"odinger equation and it is 
closer to what happen in the zero mass case (see \cite{ClappMaia2,ClappMaiaPellacci}).

Existence results of symmetric solutions  for \eqref{Choquardeq},
also in the presence of a magnetic potential, have been obtained in \cite{CingolaniClappSecchi}   for  $p$ satisfying \eqref{range} when 
$\ell(G)=\infty$,  or for $p\geq 2$ and satisfying \eqref{range} and for 
potentials  $V(x)$ going to $V_{\infty}$ from below.
While, the case of $\ell(G)$ finite, and $V(x)$ approaching to $V_{\infty}$ from above or oscillating has been tackled in \cite{clasal} for $p\geq 2$ and $\ell(G)\geq 3$. 

So this is, to our knowledge, the first existence result of a positive solution 
enjoying a finite  number of symmetries when $p<2$.

In addition, we succeed in obtaining a refined asymptotic  analysis also for
$p\geq 2$. This enables us to weaken the request on the decay of $V$ and on
the group of symmetry  allowing $\ell(G)\geq 2$.

Our results depending on the range where the exponent $\alpha$ lies
are the following.

\begin{theorem}\label{mainthm2}
Let $G$ be a closed subgroup of the linear isometries of $\R^{N}$, with  
$\ell(G) \ge 2$ and finite.
Assume \eqref{V1} and 
\begin{equation}\label{p>2}
\frac{N-2}{N+\alpha}< \frac1p<\frac12, \quad \text{ or } \quad p=2, \; \alpha\le N-1.
\end{equation}
Let $\mu_{G}$ be defined in \eqref{defmug} and  suppose that, for every $x\in\R^{N}$  it holds
\begin{equation}\label{VgeqN-1}
V(x) \le V_{\infty} + A_0 (1+\abs{x})^{\sigma}e^{-\beta\abs{x}} , \quad \text{with } A_0 >0,\; \beta\geq \mu_{G}\sqrt{V_{\infty}}
\end{equation}
with the exponent $\sigma$ satisfying  the following condition depending on the 
constant  $\mu_G$ 
\begin{equation}\label{eq:sigma1}
\begin{cases}
\sigma\in \R & \text{if }  \beta > \mu_G \sqrt{ V_{\infty}}
\\
\sigma < \min \left\{- 1, -\frac{N-1}{2} +2\tau_1 \right\} &\text{if } %\mu_G \text{ is attained and }
\beta = \mu_G \sqrt{ V_{\infty}}, 
\end{cases}\end{equation}
and
\[
 \tau_1= \begin{cases}
0 & \text{ if } p>2 \text{ or } p=2, \, \alpha < N-1\\
{\nu}\frac{\sqrt{V_{\infty}}}2 & \text{ if } p=2, \, \alpha = N-1,
\end{cases}\]
with $\nu>0$  given in \eqref{Q}.

Then, if  $V$ is $G$-invariant,  Problem \eqref{Choquardeq} has a 
positive $G$-invariant solution. 
\end{theorem}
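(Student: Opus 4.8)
The plan is to produce the $G$-invariant solution as a minimizer of $\mathcal I_V$ on the symmetric Nehari set and to push the corresponding energy level below the first threshold at which compactness fails. First I would work in the closed subspace $H^{1}_{G}(\R^{N})$ of $G$-invariant functions; since $V$ is $G$-invariant, $\mathcal I_V$ is invariant under the $G$-action, and by the principle of symmetric criticality any critical point of the restriction $\mathcal I_V|_{H^1_G}$ is a genuine solution of \eqref{Choquardeq}. Because $p\ge 2$ under \eqref{p>2}, the constraint $\langle\mathcal{I}'_V(u),u\rangle=0$ defines a $C^1$ manifold on which the fibering $t\mapsto\mathcal I_V(tu)$ has a unique positive maximum, so the symmetric level
\[
c_V^G=\inf\left\{\mathcal I_V(u): u\in H^1_G(\R^N)\setminus\{0\},\ \langle\mathcal{I}'_V(u),u\rangle=0\right\}
\]
is well defined and carries a bounded $G$-invariant Palais--Smale minimizing sequence.

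Next I would establish a $G$-symmetric Splitting Lemma: a bounded $G$-invariant Palais--Smale sequence either converges strongly, giving a nontrivial $G$-invariant solution, or loses mass through profiles solving the problem at infinity \eqref{Choqlimit}. Invariance forces each escaping profile to appear together with its full $G$-orbit, which by definition of $\ell(G)$ consists of at least $\ell(G)$ points; hence any loss of compactness costs at least $\ell(G)\,c_\infty$, where $c_\infty=\mathcal I_{V_\infty}(\omega)$ is the least action level of \eqref{Choqlimit}. Therefore $c_V^G$ is attained, and the proof is complete, as soon as one shows the strict inequality $c_V^G<\ell(G)\,c_\infty$.

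The core of the argument is then the construction of a competitor witnessing this inequality. I would pick $\xi$ realizing the minimum in the definition of $\ell(G)$, write its orbit as $\{g_1\xi,\dots,g_\ell\xi\}$ with $\ell=\ell(G)$, and set
\[
w_R=\sum_{i=1}^{\ell}\omega(\cdot-Rg_i\xi),
\]
which lies in $H^1_G(\R^N)$ because $\omega$ is radial and the centres form a $G$-orbit; set $\omega_i=\omega(\cdot-Rg_i\xi)$. Evaluating $\mathcal I_V$ along the Nehari rescaling of $w_R$ produces $\ell c_\infty$ plus three corrections as $R\to\infty$: the interaction of the translates inside the nonlocal term, governed by $\int(I_\alpha\ast\omega_i^p)\omega_j^p$ and its mixed analogues; the quadratic cross terms $\int(I_\alpha\ast\omega_i^p)\omega_i^{p-1}\omega_j$ coming from the norm; and the potential contribution $\int_{\R^N}(V-V_\infty)w_R^2$. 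The delicate and decisive step, which is exactly the content of Lemma \ref{stime epsilon precise}, is the sharp asymptotics of all these quantities: here one exploits the exponential decay $\omega(x)\sim\abs{x}^{-(N-1)/2}e^{-\sqrt{V_\infty}\abs{x}}$ available for $p\ge 2$ (with the correction at $p=2$, $\alpha=N-1$ encoded by $\nu$ and $\tau_1$ through \eqref{Q} and \eqref{eq:sigma1}), together with the long-range tail $I_\alpha\ast\omega_i^p\sim A_\alpha\norm{\omega^p}_{L^1}\abs{\cdot-Rg_i\xi}^{-(N-\alpha)}$ of the Riesz convolution, a genuinely nonlocal feature with no counterpart for the Schr\"odinger equation.

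The interaction lowers the energy while the potential term raises it, and the whole estimate reduces to showing the gain dominates the loss. This is where the hypotheses enter: the potential is weighed against the interaction gain on the exponential scale set by the separation $R\abs{g_i\xi-g_j\xi}$ and by the distance $R\abs{\xi}$ to the origin, and the requirement $\beta\ge\mu_G\sqrt{V_\infty}$ — with $\mu_G$ the geometric quantity of \eqref{defmug} measuring the minimal separation inside a $G$-orbit against its radius — is precisely what forces the potential contribution, bounded above via \eqref{VgeqN-1}, to decay no slower than the interaction gain. In the strict case $\beta>\mu_G\sqrt{V_\infty}$ the gain wins for every admissible $\sigma$; in the borderline case $\beta=\mu_G\sqrt{V_\infty}$ the two exponentials coincide and the sign of the correction is decided by the polynomial prefactors, which is exactly why \eqref{eq:sigma1} imposes an upper bound on $\sigma$. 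I expect this borderline asymptotic balance — and the $p=2$, $\alpha=N-1$ subcase, where the decay of $\omega$ itself acquires an extra factor — to be the main technical obstacle. Once $c_V^G<\ell(G)c_\infty$ is proved, the symmetric Splitting Lemma yields a minimizer $u\in H^1_G(\R^N)$; replacing $u$ by $\abs{u}$ keeps it on the manifold without raising the energy, so $u\ge 0$, and elliptic regularity together with the strong maximum principle make $u$ a positive $G$-invariant solution of \eqref{Choquardeq}.
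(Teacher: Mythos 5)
Your overall strategy coincides with the paper's: minimize $\mathcal I_V$ on the symmetric Nehari set, obtain compactness below $\ell(G)c_\infty^G$, and beat that threshold with the multibump competitor $\sum_i\omega(\cdot-Rg_iz)$. However, there are two genuine gaps in your competitor estimate. First, the choice of the base point: you take $\xi$ ``realizing the minimum in the definition of $\ell(G)$'', i.e.\ only $\#G\xi=\ell(G)$. This is not sufficient. The interaction gain decays like $R^{-\frac{N-1}{2}+2\tau_1}e^{-\mu(G\xi)\sqrt{V_\infty}R}$ (Lemma \ref{est epsilon2}), while hypothesis \eqref{VgeqN-1} only controls the potential loss at the rate $e^{-\beta R}$ with $\beta\ge\mu_G\sqrt{V_\infty}$. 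The quantity $\mu_G$ is the infimum of $\mu(Gz)$ over the set $\Sigma$ of points with minimal orbit, and a point of $\Sigma$ need not realize this infimum: the two minimizations are distinct. If $\mu(G\xi)>\mu_G$ and $\beta=\mu_G\sqrt{V_\infty}$ (the borderline case explicitly allowed by \eqref{eq:sigma1}), then the potential loss, of order $e^{-\mu_G\sqrt{V_\infty}R}$ up to polynomial factors, is exponentially larger than your gain of order $e^{-\mu(G\xi)\sqrt{V_\infty}R}$, and the strict inequality $c_V^G<\ell(G)c_\infty^G$ cannot be deduced from your competitor. The paper resolves this by proving that $\mu_G$ is attained on $\Sigma$ (Lemma \ref{le:sigmamu}) and then fixing $z$ with $\mu(Gz)=\mu_G$ in \eqref{eq:z}; this selection step is missing from your argument and cannot be dispensed with.

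Second, you never verify that the net interaction gain is strictly positive after the Nehari rescaling, and at $p=2$ this is exactly where a naive estimate fails. The fibering expansion gives $\mathcal I_V(T_R\chi_{R,z})\le\ell(G)c_\infty^G-\frac{b_p-p}{2p}\,\eps_R+o(\eps_R)$, where $b_p$ is the constant in the lower bound $\int(I_\alpha\ast\chi_{R,z}^p)\chi_{R,z}^p\ge\sum_i\int(I_\alpha\ast\omega_{i,R}^p)\omega_{i,R}^p+b_p\eps_R$. The Bernoulli-type inequality valid for every $p\ge2$ (as in \cite{clasal}) yields $b_p=2(p-1)$, which equals $p$ precisely when $p=2$: the coefficient of $\eps_R$ vanishes and one only gets $c_V^G\le\ell(G)c_\infty^G$, not the strict inequality needed for compactness. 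Since $p=2$, $\alpha\le N-1$ is one of the two cases in \eqref{p>2}, this is not peripheral; the paper needs the sharper bound $b_2=4$ of Proposition \ref{prop:somma2}, obtained by expanding $\left(\sum_i\omega_{i,R}\right)^2$ exactly rather than via Bernoulli. A smaller slip in the same direction: the ``decisive'' lemma you invoke, Lemma \ref{stime epsilon precise}, belongs to the $p<2$ analysis of Section \ref{sec:p<2}; for $p\ge2$ the relevant tools are Lemma \ref{ACR}, Lemma \ref{est epsilon2} and Proposition \ref{prop:somma2}.
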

Theorem \ref{mainthm2} does not cover the case $p=2$ and $\alpha\in (N-1,N)$.
This is because for $\alpha $ lying in this range the decay of least action 
solutions of \eqref{Choqlimit} has a perturbation in the exponential term
(see \eqref{decaypertexp})  so that, hypothesis \eqref{VgeqN-1} is not suitable
any more, and we will prove the following result.
\begin{theorem}\label{mainthm4}
Let $G$ be a closed subgroup of linear isometries of $\mathbb{R}^N$, with 
$\ell(G) \ge 2$ and finite. Assume \eqref{V1} and 
\begin{equation}\label{palpha}
p=2,\quad   \alpha\in \left(N-1,N-\frac12\right],\quad
\text{and let }\;
\gamma=1-N+\alpha \in \left(0, \frac12\right].
\end{equation}
Let $\mu_{G}$ be defined in \eqref{defmug} and  suppose that, for every $x\in\R^{N}$  it holds
\begin{equation}\label{VgreatN-1} 
V(x) \le V_{\infty} + A_0(1+ \abs{x})^\sigma e^{-\beta \abs{x}+ c'\abs{x}^{\gamma'}}, \;  
\quad \text{with } A_0 >0,\; \beta\geq \mu_{G}\sqrt{V_{\infty}}
\end{equation}
where (recalling  \eqref{decaypertexp},  and \eqref{Q}) the constants 
 $\gamma' \in [0, 1)$, $c' \ge 0$, $\sigma \in \R$ are such that
\begin{itemize}
\item[(i)] If $\beta > \mu_G \sqrt{V_\infty}$, then $\gamma' \in [0, 1)$, $c' \ge 0$, $\sigma \in \R$. 
\item[(ii)] If $\beta= \mu_G \sqrt{V_\infty}$, % with $\mu_G$ attained, 
we assume $\gamma'\leq \gamma$ and  
\[
 c' \ge 0, \quad \sigma \in \R, \qquad \text{ if }  \gamma'<\gamma.
\]
\item[(iii)] If  $\beta= \mu_G \sqrt{V_\infty}$ and $\gamma'=\gamma$ we assume that 
$\mu_{G}<2$, $c' \leq 2^{1-\gamma} c_\gamma  \mu_G^\gamma$ and $\sigma $ is  such that
\begin{equation*}
\begin{cases}
\begin{split}
%\, c' \ge 0, \, \sigma \in \R, & \text{ if }  \gamma'<\gamma, \\
\, \sigma &\in \R,\hskip3.2cm  \text{ if } %\gamma'=\gamma, \, \mu_G<2, \text{ and }  
c' < 2^{1-\gamma} c_\gamma \mu_G^\gamma, \,  \\
\, \sigma &< -\frac{N-1}{2} +\frac \gamma 2 + 2\tau_2 \quad
\text{ if } 
%\gamma'=\gamma,  \, \mu_G<2,  \text{ and } 
c' = 2^{1-\gamma} c_\gamma \mu_G^\gamma, 
%\end{cases}
%\end{equation*}
\quad\text{with %and where $\tau_{2}$ is such that
}
%\[ 
\tau_2 =
\begin{cases} 0 &\text{ if } \alpha < N-\frac12 \\
\frac{\sqrt{V_\infty} \nu}{8} &\text{ if } \alpha = N-\frac12. \end{cases}\end{split} 
\end{cases}\]
\end{itemize}
Then if  $V$ is $G$-invariant,  Problem \eqref{Choquardeq} has a positive $G$-invariant solution. 
\end{theorem}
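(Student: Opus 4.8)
The plan is to follow the variational scheme already outlined in the introduction, but with the decay estimate tailored to the perturbed exponential behaviour of $\omega$ that arises precisely when $p=2$ and $\alpha\in(N-1,N)$. As in Theorems \ref{mainthm1} and \ref{mainthm2}, I would work in the subspace $H^1_G(\R^N)$ of $G$-invariant functions and minimize $\mathcal I_V$ on the symmetric Nehari manifold $\mathcal N_V\cap H^1_G(\R^N)$. Since $p=2\ge 2$, the Nehari manifold is of class $C^1$ here and the quadratic structure is friendly, so the standard argument produces a bounded Palais--Smale sequence that is simultaneously minimizing for the $G$-symmetric least action level, call it $c_{V,G}$. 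By the principle of symmetric criticality, any critical point obtained in $H^1_G$ is a genuine critical point of $\mathcal I_V$ on $H^1(\R^N)$; a positivity argument (replacing $u$ by $|u|$, which preserves $G$-invariance and the value of $\mathcal I_V$) then yields a positive $G$-invariant solution.

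The heart of the matter is the compactness threshold. The relevant Splitting Lemma guarantees compactness of $G$-invariant Palais--Smale sequences below the level $\ell(G)\, m_\infty$, where $m_\infty$ is the least action level of the problem at infinity \eqref{Choqlimit} attained by $\omega$; the factor $\ell(G)\ge 2$ enters because the smallest nontrivial $G$-orbit forces any ``escaping'' bubble to split into at least $\ell(G)$ copies of $\omega$. Hence the decisive strict inequality to establish is
\begin{equation*}
c_{V,G} < \ell(G)\, m_\infty .
\end{equation*}
To obtain it I would construct a test function of the form $w_R=\sum_{i} \omega(\cdot - g_i\xi_R)$, where $g_1,\dots,g_{\ell(G)}$ realize a minimal $G$-orbit of a point $\xi$ and $\xi_R=R\xi$ with $R\to\infty$, projected onto $H^1_G$, and then estimate $\mathcal I_V$ on the corresponding point of the Nehari manifold. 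The level splits into the sum of $\ell(G)$ copies of $m_\infty$, corrected by (a) the interaction terms coming from the nonlocal nonlinearity between distinct translates, which \emph{raise} the energy, and (b) the potential term $\tfrac12\int (V-V_\infty)w_R^2$, which by \eqref{VgreatN-1} \emph{lowers} it. The goal is to show the gain from (b) strictly dominates the loss from (a) for $R$ large.

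The main obstacle, and the reason for the delicate hypotheses \eqref{palpha}--(iii), is the competition between these two quantities once the decay of $\omega$ is only polynomially-corrected exponential, namely (recalling \eqref{decaypertexp}) $\omega(x)\sim |x|^{-(N-1)/2}e^{-\sqrt{V_\infty}|x|+c_\gamma|x|^{\gamma}}$ with $\gamma=1-N+\alpha$. The extra subexponential factor $e^{c_\gamma|x|^{\gamma}}$ is exactly what forces the tripartite case analysis on $(\beta,\gamma',c',\sigma)$: both the interaction term and the potential term inherit this perturbation, so one must track the precise rate at which each decays in $R$. The computation of the nonlocal interaction $\int (I_\alpha\ast |w_R|^p\!-\!\sum_i I_\alpha\ast|\omega_i|^p)\cdots$ will require a sharp asymptotic of the convolution of two far-apart bumps, presumably supplied by a lemma analogous to Lemma \ref{stime epsilon precise} adapted to the perturbed decay; I expect this to be where most of the technical work lies. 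Once both rates are pinned down, the thresholds on $c'$ (namely $c'\le 2^{1-\gamma}c_\gamma\mu_G^\gamma$) and on $\sigma$ in case (iii) are exactly the conditions ensuring the potential gain beats the interaction loss in the borderline regime $\beta=\mu_G\sqrt{V_\infty}$, $\gamma'=\gamma$, with the term $\tau_2$ absorbing the additional logarithmic-type correction that appears at the endpoint $\alpha=N-\tfrac12$. Verifying the strict inequality in this most degenerate case is the crux; the other cases (i) and (ii) follow by the same construction with strictly more favourable, hence easier, estimates.
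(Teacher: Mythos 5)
Your overall scaffolding (minimization on the symmetric Nehari manifold, symmetric criticality, passage to $\abs{u}$, compactness of $G$-invariant Palais--Smale sequences below $\ell(G)c^G_\infty$, and a multi-bump competitor $\chi_{R,z}=\sum_i\omega(\cdot-Rg_iz)$ along a minimal orbit) matches the paper. But there is a genuine gap: you have reversed the roles of the two correction terms, and with your sign convention the strict inequality $c^G_V<\ell(G)c^G_\infty$ cannot be proved. In the paper the \emph{nonlocal interaction between distinct translates is the gain, not the loss}: by Proposition \ref{prop:somma2}, for $p=2$ the cross terms contribute $4\eps_R$ to the nonlinear term while they contribute only $\eps_R$ to the quadratic form $\norm{\chi_{R,z}}_V^2$ (see \eqref{defeps}), and since the Nehari-projected level is a quotient of the form $(\norm{\cdot}_V^2)^{p/(p-1)}\big/\bigl(\int(I_\alpha\ast\chi^p)\chi^p\bigr)^{1/(p-1)}$, the excess $4>2$ yields $\mathcal I_V(T_R\chi_{R,z})\le \ell(G)c^G_\infty-\tfrac12\eps_R+o(\eps_R)$ (Proposition \ref{estimatesIV2}). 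The potential term, on the other hand, is a \emph{potential loss}: hypothesis \eqref{VgreatN-1} is only an upper bound, $V$ may exceed $V_\infty$, and there is no lower bound at all, so $\int(V-V_\infty)\chi_{R,z}^2$ cannot be relied upon to lower anything. The whole point of conditions (i)--(iii) is to force this term to be $o(\eps_R)$ (Lemma \ref{estimatesV2}), which requires the sharp asymptotics of $\eps_R$ under the perturbed decay \eqref{decaypertexp} (Lemma \ref{est epsilon2}) and the new convolution-type estimates of Lemmas \ref{lemma estensione1} and \ref{lemma estensione2}, not an adaptation of Lemma \ref{stime epsilon precise} (which belongs to the polynomial-decay regime $p<2$).

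To see concretely that your mechanism fails: $V\equiv V_\infty$ satisfies \eqref{V1}, \eqref{VgreatN-1} and $G$-invariance, yet gives identically zero "potential gain", so under your scheme ("show the gain from the potential dominates the loss from the interaction") nothing would ever push the level strictly below $\ell(G)c^G_\infty$, and the compactness argument could not be started. The correct competition is the opposite one: the attractive interaction decays like $R^{-\frac{N-1}{2}+\frac\gamma2+2\tau_2}e^{-\mu_G\sqrt{V_\infty}R+2^{1-\gamma}c_\gamma(\mu_G R)^\gamma}$ (distance between bumps $\mu_G R$), the potential term decays like the profile of $V-V_\infty$ evaluated at distance $R$ (bump to origin), and the thresholds $\beta\ge\mu_G\sqrt{V_\infty}$, $c'\le 2^{1-\gamma}c_\gamma\mu_G^\gamma$ and the bound on $\sigma$ in case (iii) are exactly what makes the potential term \emph{negligible} relative to, rather than dominant over, $\eps_R$. (Two minor further points: $\tau_2$ is a polynomial, not logarithmic, correction appearing in \eqref{decaypertexp} at $\alpha=N-\tfrac12$; and $\chi_{R,z}$ is already $G$-invariant since $\omega$ is radial and $G$ permutes the orbit, so no projection onto $H^1_G$ is needed.)
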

%%%%%%%%%%%%%%%%
Theorems \ref{mainthm2} and \ref{mainthm4} extend Theorem 1.3 in \cite{clasal} 
under various aspects. First of all, as already mentioned, we also include the case 
$\ell(G)=2$. 
Moreover, even in the case $\ell(G) \ge 3$, we improve the decay assumptions on 
$V$, and we also study the threshold case $\beta=\mu_G \sqrt{V_\infty}$.

As mentioned above, when $p>2$ the decay of $\omega$ is 
analogous to the one of the unique positive solution of the nonlinear 
autonomous  Schr\"odinger  equation (see \cite{BerLions}), so that,   we 
are naturally  lead to  assume that the potential $V(x)$ approach its limit at infinity 
decaying in an exponential way too. Moreover,  differently  to the case of Theorem 
\ref{mainthm1},  here we see the effect of the symmetry in the decay due to the 
presence of the constant $\mu_{G}$, which takes into account the least distance 
between two 
elements of the $G$-orbit of a point $x$ in the unit sphere.
This marks another relevant difference with the case $p<2$ as $\mu_{G}$ does not play any role in this latter case.

When the exponent $\beta$ in the decay  of $V(x)$ reaches the  threshold
$\mu_{G}\sqrt{V_{\infty}}$, the corrections become important. In particular,
 in Theorem \ref{mainthm2} this role is played by the polynomial part and
 we have to take into account that
when $p=2$  and $\alpha=N-1$ a polynomial perturbation with exponent
$\tau_{1} $ (see \eqref{decay2}) appears  in the decay of $\omega$, 
so that we are
 naturally lead to assume  \eqref{eq:sigma1}. 
 
In the setting of Theorem \ref{mainthm4}, we first have to 
consider the fact that when $\alpha$ overcome $N-1$, the decay of $\omega$ 
changes again and an exponential perturbation arises (see \eqref{decaypertexp});  
in this situation one has to face new difficulties, which can be overcome  by 
means of some new  technical lemma (see Lemma \ref{lemma estensione1} 
\ref{lemma estensione2}) which we believe that may 
be of  independent interest.
In addition, in this case the threshold is achieved when both $\beta=\mu_{G}\sqrt{V_{\infty}}$ and
 $\gamma'=\gamma$; at this point we need the condition $\mu_{G}<2$ and 
 we see that the constant $c'$ becomes relevant: if $c'<2^{1-\gamma} c_\gamma  \mu_G^\gamma$ ($c_{\gamma}$ given in \eqref{decaypertexp}) then the exponential term still guide the asymptotic, while
 if $c'=2^{1-\gamma} c_\gamma  \mu_G^\gamma$ again the polynomial part starts being the leading term and we arrive at the condition on $\sigma$.

When $p=2$, $\alpha > N- \frac{1}{2}$, new perturbations appear in the 
decay of $\omega$ (see \cite{MorozVanSchaftJFA}) and we
expect that  similar results could be obtained, by slightly modifying our arguments 
and at the price of some heavy technicalities. The first step would be to prove an 
extension of  Lemma \ref{lemma estensione1}, and \ref{lemma estensione2}  to the case of functions with more involved exponential corrections. 

Concluding, let us point out that our 
conditions on the potential $V$  are somewhat sharp, meaning  that, 
if they are not satisfied the decay of  $V$ may be not comparable with the 
asymptotic decay of the  solutions of the limit problem, see also Remark 
\ref{rmk:condizioniottimali}. 

 \medskip
 
This paper is organized as follows: in Section \ref{setting} we give the variational setting of the problem and some preliminary results, whereas in Section \ref{sec:p<2}  we study the case $p<2$, and prove Theorem \ref{mainthm1}. Theorem \ref{mainthm2} and Theorem \ref{mainthm4} will be proved in Section \ref{sec:pgeq2} through a unified approach. 
 
%%%%%%%%%%%%%%%%%%%%%%%%%%%%%%%%%%%%%%%%%%%%%
%%%%%%%%%%%%%%%%%%%%%%%%%%%%%%%%%%%%%%%%%%%%%
\section{Setting of the problem and preliminaries}\label{setting}
%%%%%%%%%%%%%%%%%%%%%%%%%%%%%%%%%%%%%%%%%%%%%
%%%%%%%%%%%%%%%%%%%%%%%%%%%%%%%%%%%%%%%%%%%%%

In this section we introduce the symmetric framework in which we settle
our problem. Let us observe that the use of symmetry turns out to be a useful
and largely exploited tool when looking for existence results to 
\eqref{Choquardeq} (see \cite{Ackermann, clasal1, DAveniaMederskiPomponio, GhimentiVanSchaft}).
 
%%%%%%%%
In what follows, $G$ will represent a closed subgroup of linear isometries of $\mathbb{R}^N$. Define the $G$-orbit of $x$ as $Gx=\{ gx: \, g \in G \}$, and 
$\#G x$ its cardinality. We set 
\begin{equation}\label{def ell} 
\ell(G)=\min \left\{ \# Gx : \, x \in \mathbb{R}^N \setminus \{ 0 \} \right\}. \end{equation}
As  mentioned in the Introduction, the case $\ell(G) = +\infty$
has been treated in \cite[Theorem 1.1]{CingolaniClappSecchi}.
Here, we will assume 
\[
\ell(G) < +\infty. 
\]
%so that $\ell(G)$ is attained.
\begin{remark}\label{re:lG}
In general, there may exist points such that $\#G x > \ell(G)$. For instance, 
take in $\mathbb{R}^4\cong \mathbb{C} \times \mathbb{C}$ the group $G=\mathbb{Z}_2 \times \mathbb{Z}_3 $, where $\mathbb{Z}_l$ is the cyclic group generated by the $l$-th roots of the unit. Then the point $x=(0,0,0,1)\cong(0,i)$ has $\#G x= 3$, and $\ell(G)=2$ as it possible to
see taking $y=(1,0,0,0)\cong(1,0)$.
\end{remark}
%\ell è un minimo perchè è un inf sui naturali 
In Section \ref{sec:p<2} we will just use the notion of $\ell(G)$, while 
in Section \ref{sec:pgeq2} the minimum distance between two different 
orbit points will play a role in the exponential decay estimates.

More precisely, let us consider the set $\Sigma$ given by
\begin{equation} \label{def:sigma}
\Sigma=\left\{ x \in S^{N-1}: \, \#G x=\ell(G) \right\}.
\end{equation}
Let us define 
\begin{equation} \label{eq:muGz}
\mu(Gz)=\begin{cases}
\inf \{ \abs{ gz -h z}: g, h \in G, gz \ne hz \}, & \text{ if } \#G z \ge 2 \\
2\abs{z} & \text{ if } \#G z=1,
\end{cases}
\end{equation}
for every $z\in \Sigma$, and  the extremum  
\begin{equation}\label{defmug}
\mu_G=\inf_{z \in \Sigma} \mu(Gz).
\end{equation}
The following properties of $\Sigma $ and $\mu_{G}$ will be 
useful in Section \ref{sec:pgeq2}.
\begin{lemma}\label{le:sigmamu}
The set $\Sigma\neq \emptyset$ is a  compact, G-invariant subset of $\R^{N}$ and
  $\mu_{G}$ is achieved.
\end{lemma}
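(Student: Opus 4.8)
The plan is to verify the four assertions---nonemptiness, $G$-invariance, compactness of $\Sigma$, and attainment of $\mu_G$---in increasing order of difficulty, reducing the last two to a single continuity property of the orbit structure.

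First I would dispose of the two elementary points. For nonemptiness, since $\ell(G)$ is by definition the minimum of the $\N$-valued quantity $\#Gx$ over $x \in \R^N\setminus\{0\}$, it is attained at some $x_0 \neq 0$; because every $g\in G$ is linear, the orbit of $x_0/\abs{x_0}$ is the rescaled orbit $Gx_0/\abs{x_0}$, so it has the same cardinality $\ell(G)$, whence $x_0/\abs{x_0}\in\Sigma$. For $G$-invariance, if $z\in\Sigma$ and $g\in G$ then $\abs{gz}=\abs{z}=1$ since $g$ is an isometry, while $G(gz)=Gz$ gives $\#G(gz)=\ell(G)$; hence $gz\in\Sigma$.

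For compactness it suffices to prove that $\Sigma$ is closed in the compact set $S^{N-1}$. The key observation I would establish is that $x\mapsto \#Gx$ is lower semicontinuous: fixing $k\le \#Gx$ distinct orbit points $g_1x,\dots,g_kx$ and using that the $g_i$ are isometries, for $y$ close to $x$ the points $g_1y,\dots,g_ky$ remain pairwise distinct (by the triangle inequality together with $\abs{g_iy-g_ix}=\abs{y-x}$), so $\#Gy\ge k$. Consequently, if $x_n\in\Sigma$ and $x_n\to x\in S^{N-1}$, then $\#Gx\le \liminf_n \#Gx_n=\ell(G)$; since $\ell(G)$ is the global minimum and $x\neq 0$ we also have $\#Gx\ge\ell(G)$, whence $x\in\Sigma$.

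Finally, to show $\mu_G$ is attained I would prove that $z\mapsto\mu(Gz)$ is continuous and strictly positive on the compact set $\Sigma$, so that it attains a positive minimum there. The crucial point---and the step I expect to be the main obstacle---is that on $\Sigma$ the orbit cardinality is the constant $\ell(G)$, which forces the orbit points to track continuously: if $z_n\to z$ in $\Sigma$ and $Gz=\{g_1z,\dots,g_\ell z\}$, then for large $n$ the points $g_1z_n,\dots,g_\ell z_n$ are again $\ell$ distinct points and therefore exhaust $Gz_n$, so $\mu(Gz_n)=\min_{i\neq j}\abs{g_iz_n-g_jz_n}\to\min_{i\neq j}\abs{g_iz-g_jz}=\mu(Gz)$ (with the analogous, trivial statement $\mu(Gz)\equiv 2$ in case $\ell(G)=1$). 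Continuity and positivity on compact $\Sigma$ then yield $\mu_G=\min_{z\in\Sigma}\mu(Gz)>0$, attained at some $z^*$. The delicate part is precisely ruling out degeneration of the infimum defining $\mu(Gz)$ over the open condition $gz\neq hz$; the constancy of $\#Gz$ on $\Sigma$ is exactly what prevents two distinct orbit points from colliding in the limit, and hence what guarantees both the continuity and the strict positivity.
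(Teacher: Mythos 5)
Your proposal is correct and follows essentially the same route as the paper: both arguments rest on the single observation that finitely many distinct orbit points $g_1x,\dots,g_kx$ remain pairwise distinct under small perturbations of $x$ (you package this as lower semicontinuity of $x\mapsto \#Gx$, the paper as a proof by contradiction), and both then use the constancy $\#Gz\equiv\ell(G)$ on $\Sigma$ to show that $g_1z_n,\dots,g_\ell z_n$ exhaust $Gz_n$, giving continuity of $z\mapsto\mu(Gz)$ on the compact set $\Sigma$ and hence attainment of $\mu_G$.
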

\begin{proof}
The set $\Sigma $ is nonempty, because $\ell(G)$ is attained, and 
for every $x \in \mathbb{R}^N \setminus \{ 0 \} $ such that $ \#G x=\ell(G)$, then
$x/\abs{x} \in S^{N-1}$ and  $\#G x=\#G (x/\abs{x})$, since elements in $G$ are linear transformations, so that $x/\abs{x} \in \Sigma$.  
In addition, the G-invariance property is a direct consequence of the definition.

In order to prove that $\Sigma$ is closed,  let $(x_n)\in\Sigma$ be such that 
$x_n\to x$. Arguing by contradiction, assume that $x\notin\Sigma$. 
Then the orbit of $x$ contains a number of  points greater than $\ell(G)$, so that
there exist $g_1,\ldots,g_{\ell+1}\in G$ with $g_ix\neq g_jx$ for every $i\neq j$, 
$i,j=1,\ldots,\ell+1$. As $g_ix_n\to g_ix$ for every $i$, we get that $g_i x_n\neq 
g_j x_n$ if $i\neq j$, for $n$ large enough, i.e., $\#Gx_n\geq \ell+1$, 
which cannot be as  $x_n\in\Sigma$. 
This shows that $\Sigma$ is closed, and as it is contained in $S^{N-1}$, it turns 
out  to be a compact set.

Let us now define the function $f:\Sigma \mapsto \R$  as
\[
f(x):=\mu(Gx), \qquad \text{where $\mu_{G}$ is defined in \eqref{eq:muGz}},
\]
and prove that $f$ is continuous. 
Let $x_n\to x$ in $\Sigma$. Choose $g_1,\ldots,g_\ell\in G$ such that $Gx=\{g_1x,\ldots,g_\ell x\}$. 
 Arguing as before, one obtains that $g_{i}x_{n}\neq g_{j}x_{n}$ for $i,\,j=1,\dots \ell(G)$ and for $n$ sufficiently large, so that 
 $G(x_{n})=\{g_1x_{n},\ldots,g_\ell x_{n}\}$, because $x_{n}\in \Sigma$.
Then, the continuity immediately follows if $\# Gx=1$, otherwise, it
results
$$
f(x_n)=\min_{i\neq j}|g_ix_n-g_jx_n|\to \min_{i\neq j}|g_ix-g_jx|=f(x)\qquad\text{as \ }n\to\infty.
$$
So $f$ is continuous, as claimed.
As a consequence, $\mu_{G}$  is achieved.
\end{proof}
%%%
The influence of symmetries will appear in the decay estimates 
for $p\geq 2$ through the constant $\mu_{G}$.
In the  following  remarks we  give some information  on $\mu_{G}$ that 
 will be useful  in Section \ref{sec:pgeq2} and that illustrate some hypotheses
 of Theorem \ref{mainthm4}.
\begin{remark}\label{re:muG}
Notice that $0<\mu_{G}\leq  2.$ Moreover, if  $\mu_{G}=2$ then
$\ell(G)=1$ or $\ell(G)=2$.

Indeed, the first inequality is a direct consequence
of the fact that $\mu_{G}$ is attained.
On the other hand, the second inequality follows by observing that the distance between  two distinct points on the unit  sphere is less or equal than two.

Furthermore, suppose by contradiction that  $\mu_{G}=2$ and $\ell(G)\geq 3$.
Then, there exists $x\in \Sigma$ such that $\mu(Gx)=2$ and there exist
$g_{1},\,g_{2},\,g_{3}\in G$ such that $g_{i}x\neq g_{j}x$.
Without loss of generality, we can assume that 
$|g_{1}x-g_{2}x|=\mu(Gx)=\mu_{G}=2$, but then $|g_{1}x-g_{3}x|<2$ as $|g_{i}x|=1$ for 
every $i=1,\,2,\,3$, which contradicts the fact that the minimum  is $\mu_{G}=2$.

 \end{remark}
\begin{remark}
In conclusion (iii) of Theorem \ref{mainthm4} we assume $\mu_{G}<2$.
Notice that one can find groups such that $\mu_G < 2$ and $\ell(G)=2$. For instance, let $g$ the linear isometry in $\R^3$ which corresponds to a clockwise rotation of angle $\pi/2$ around the $y$ axis, followed by a clockwise rotation of angle $\pi$ around the $z$ axis. Take the closed group acting on $\R^3$ generated by $g$. Then $\ell(G)\ge2$, as every point on the sphere is mapped by $g$ in a point different from itself. Moreover, consider the north pole $N=(0,0, 1)$. This point is mapped into $(1, 0, 0)$ by $g$ and $g^{-1}$, and $g^2(N)=N = (g^{-1})^2(N)$, thus $\#G N=2$, $\ell(G)=2$ and $N \in \Sigma$. However, the distance between $N$ and $(1, 0, 0)$ is less than 2, thus $\mu_G <2$.
\end{remark}

As observed in the introduction, our results cover the case 
$\ell(G)=2$. Note that there are 
many groups satisfying $\ell(G) \ge 3$ when $N=2n$ is even, but this is not the case if $N$ is odd. For further remarks concerning $\ell(G)$ see 
\cite[pg.4]{clasal}.

We will work in the functional space 
\[
H^1_G= \left\{ u \in H^1(\R^{N}): \, u(gx)=u(x) \, \text{ for any } g \in G, \, x \in \mathbb{R}^N \right\}
\]
 endowed, thanks to \eqref{V1}, with the scalar product and norm
\[ 
( u, v )_V = \int_{\mathbb{R}^N} (\nabla u \nabla v + V(x) uv), \quad \quad \norm{u}^2_V=\int_{\mathbb{R}^N}(\abs{\nabla u}^2 + V(x) u^2 ). 
\]
Every symmetric solution to \eqref{Choquardeq} is a critical point of the action
functional $\mathcal{I}_V: H^{1}_{G}\mapsto \R$ defined in \eqref{eq:defI}.
Indeed, $\mathcal{I}_V(u)$ is G-invariant as $V$ is, so that 
the  principle of symmetric criticality (\cite{pal}) applies.

Hypothesis \eqref{range} and Hardy-Littlewood-Sobolev inequality
imply that $\mathcal{I}_V$ is a $C^{1}$ functional on $H_G^1$, (see \cite[Proposition 3.1]{MorozVanSchaftJFPTA}), so that we can define
\[
\langle \mathcal{I}'_V(u), u \rangle=\norm{u}_V^2 - \int_{\mathbb{R}^N} (I_{\alpha} \ast \abs{u}^p) \abs{u}^{p} \]
and
\begin{equation}\label{defnehari}
\mathcal{N}_V^G=\left\{ u \in H^1_G({\mathbb{R}^N}) \setminus \{ 0 \} : 
\langle \mathcal{I}'_V(u), u \rangle=0\right\},\qquad 
 c_V^G=\inf_{u \in \mathcal{N}_V^G} \mathcal{I}_V(u).
 \end{equation}
Notice that twice differentiability of $\mathcal{I}_V$ holds only for $p \ge 2$, 
(see for instance \cite{MorozVanSchaftJFA}). 
As a consequence, $\mathcal{N}_V^G$ is not, in general, a differentiable 
manifold.
In order to overcome these difficulties we will use the approach in \cite{SzulkinWeth} (see Section \ref{proofs1}).

In an analogous way, let us define $\mathcal{I}_{\infty}: H^{1}(\R^{N})\mapsto \R$ by
\[
\mathcal{I}_{\infty}(u)=\frac{1}{2} \int_{\mathbb{R}^N}(\abs{\nabla u}^2 + V_{\infty}u^2) - \frac{1}{2p} \int_{\mathbb{R}^N} (I_{\alpha} \ast \abs{u}^p) \abs{u}^p ,
\]
where  $H^{1}(\R^{N})$ is endowed with the scalar product and the norm
\begin{equation}\label{normainfty}
( u, v ) = \int_{\mathbb{R}^N} (\nabla u \nabla v + V_{\infty} uv), \quad \quad \norm{u}^2=\int_{\mathbb{R}^N}(\abs{\nabla u}^2 +V_{\infty}  u^2 )
\end{equation}
and accordingly $\mathcal{N}_\infty^G$ and 
$c_{\infty}^G$ are defined for \eqref{Choqlimit}. 

The existence of a least action solution to \eqref{Choqlimit} is proved, under 
assumption \eqref{range}, in  Theorem 3.2 in \cite{MorozVanSchaftJFPTA}. 
Moreover, weak solutions are classical, and, up to translation and inversion of the 
sign, positive and radially symmetric, see \cite{MorozVanSchaftJFA, Lieb}. 
Precise decay asymptotic  for solutions to \eqref{Choqlimit} are 
given in Propositions 6.3, 6.5 and Remark 6.1 in \cite{MorozVanSchaftJFA}, 
(see  also \cite{MorozVanSchaftJDE}),  depending on the value of $p$. 

Summarizing the following result holds.
\begin{theorem}[Theorem 4 pg.157 in \cite{MorozVanSchaftJFA}]
Assume $\alpha\in (0,N)$ and that $p$ satisfies \eqref{range}.
Let $\omega$ be a least action solution to \eqref{Choqlimit}.
Then the following asymptotic estimates hold.
\begin{enumerate}
\item
If $p<2$, there exists a positive constant $c$ such that  
\begin{equation}\label{decay}
\omega(x)=(c+o(1))\abs{x}^{-\frac{N-\alpha}{2-p}}
  \quad 
\text{ as } \abs{x} \to \infty. \end{equation}
\item
Under \eqref{p>2}, it results
\begin{equation}\label{decay2} 
\omega(x)= (c+o(1))\abs{x}^{-\frac{N-1}{2}+\tau_{1}}
e^{-\sqrt{ V_{\infty}}\abs{x}} 
\quad \text{ as } \abs{x} \to \infty. 
\end{equation} 
where $\tau_{1}=0$ if $p>2$ or $p=2$ and $\alpha<N-1$; while $\tau_{1}= 
\frac{\sqrt{V_{\infty}} \nu}{2}$ when $p=2$ and $\alpha=N-1$ and 
where $\nu$ is a positive constant depending on the $L^{2}(\R^{N})$ norm of $\omega$ (see \eqref{Q} below).
\item
If $p=2$ and $N-1<\alpha\leq N-\frac12$,
then $\omega$ decays as follows
\begin{equation}\label{decaypertexp}  
\omega(x) = (c+o(1)) \abs{x}^{-\frac{N-1}{2}+\tau_{2}}e^{-\sqrt{V_{\infty}}\abs{x}+c_\gamma \abs{x}^{\gamma}}, \qquad \text{with $\gamma=1-N+\alpha$, }
\end{equation} 
and where $c_{\gamma}=\frac 1 \gamma \nu^{1-\gamma} \sqrt{V_\infty}$; 
$\tau_{2}=0$ if $\alpha<N-\frac12$ and $\tau_{2}= \frac{\sqrt{V_\infty} \nu}{8}$
when $\alpha=N-1/2$, and $\nu$ is as in \eqref{Q} below.
\end{enumerate}
\end{theorem}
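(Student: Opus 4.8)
Since the statement is quoted verbatim from \cite{MorozVanSchaftJFA}, I only outline the route one would follow to reconstruct it. The starting point is to read \eqref{Choqlimit} as a \emph{linear} Schr\"odinger equation with a decaying coefficient: writing $\Phi:=I_\alpha\ast\omega^p$, the solution $\omega$ solves
\[
-\Delta\omega+\bigl(V_\infty-\Phi\,\omega^{p-2}\bigr)\omega=0,
\]
so the whole asymptotic behaviour is dictated by how fast the effective potential $W(x)=V_\infty-\Phi(x)\omega^{p-2}(x)$ returns to $V_\infty$. I would first record what is already available from the existence and regularity theory: $\omega$ is positive, radial, of class $C^2$, and $\omega(x)\to 0$ as $\abs{x}\to\infty$; a crude upper bound (polynomial or exponential, obtained by a first comparison argument) is then enough to feed the convolution estimate.

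The core step is the decay of the nonlocal coefficient $\Phi$. I would prove a convolution lemma showing that, whenever $\omega^p\in L^1(\R^N)$ and decays fast enough that the mass near the origin dominates (which, in the range \eqref{range}, follows from $p>1$), splitting $\int \omega^p(y)\abs{x-y}^{-(N-\alpha)}\,dy$ into a region about the origin and a region about $y\approx x$ yields
\[
\Phi(x)=A_\alpha\Bigl(\int_{\R^N}\omega^p\Bigr)\abs{x}^{-(N-\alpha)}\bigl(1+o(1)\bigr),\qquad \abs{x}\to\infty.
\]
Thus $\Phi$ \emph{always} decays polynomially like $\abs{x}^{-(N-\alpha)}$, irrespective of whether $\omega$ decays polynomially or exponentially; the sharp prefactor is what will eventually fix the constant $\nu$ of \eqref{Q}. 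In the polynomial regime one must also check self-consistency, i.e.\ that the near-$x$ contribution is genuinely lower order than the origin term.

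I would then split into the three regimes according to how $\Phi\,\omega^{p-2}$ compares with $V_\infty$. If $p<2$, inserting the ansatz $\omega\sim c\abs{x}^{-\theta}$ and using that $-\Delta\abs{x}^{-\theta}$ is lower order than $V_\infty\abs{x}^{-\theta}$, the leading balance $V_\infty\omega\sim\Phi\,\omega^{p-1}$ forces $\theta(2-p)=N-\alpha$, i.e.\ $\theta=\frac{N-\alpha}{2-p}$, which is \eqref{decay}; this is made rigorous by sub/supersolutions $c_\pm\abs{x}^{-\theta}$ and the comparison principle, bootstrapping the crude bound to the sharp rate. If $p\geq2$ with $\alpha\le N-1$, then $\Phi\,\omega^{p-2}\to0$ and $\omega$ decays like a solution of $-\Delta u+V_\infty u=0$, namely $\abs{x}^{-(N-1)/2}e^{-\sqrt{V_\infty}\abs{x}}$; reducing to the radial ODE $-\omega''-\frac{N-1}{r}\omega'+V_\infty\omega=g(r)\omega$ with $g=\Phi\,\omega^{p-2}$ and applying a Liouville/asymptotic-integration argument, an integrable $g$ leaves the prefactor unchanged ($\tau_1=0$), whereas at the threshold $p=2$, $\alpha=N-1$ one has $g(r)\sim c\,r^{-1}$, whose divergent logarithmic primitive produces exactly the polynomial correction $r^{\tau_1}$ of \eqref{decay2}. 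Finally, for $p=2$, $N-1<\alpha\le N-\tfrac12$, the coefficient $g(r)\sim c\,r^{-(N-\alpha)}$ with $N-\alpha\in[\tfrac12,1)$ is neither integrable nor absorbable into the prefactor; the integrating factor of the first-order asymptotic ODE then generates a subexponential factor, and since the primitive of $r^{-(N-\alpha)}$ grows like $r^{\gamma}$ with $\gamma=1-N+\alpha$, one recovers $e^{c_\gamma r^{\gamma}}$ together with the residual power $r^{\tau_2}$, as in \eqref{decaypertexp}.

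The main obstacle is the borderline analysis. Pinning down the sharp constant $\nu$ in the convolution expansion, and then tracking the exact polynomial and subexponential corrections in the critical cases $\alpha=N-1$ and $\alpha=N-\tfrac12$, is delicate precisely because there the nonlocal term sits at the same order as the linear part: the plain comparison principle no longer determines the prefactor, so a careful WKB/asymptotic-integration treatment of the radial ODE, with quantitative control of the remainder in the expansion of $\Phi$, is required. Everything else reduces to comparison arguments and standard elliptic estimates.
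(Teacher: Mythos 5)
This statement is not proved in the paper at all: it is quoted verbatim from \cite{MorozVanSchaftJFA} (Theorem 4 there, together with Propositions 6.3, 6.5 and Remark 6.1), the paper's only added content being the remark that all three regimes follow from the general decay formula \eqref{decay3} by Taylor-expanding the function $Q$ of \eqref{Q}. Your outline is a correct reconstruction of exactly that route --- the convolution asymptotics $I_\alpha\ast\omega^p\sim A_\alpha\|\omega\|_p^p\,\abs{x}^{\alpha-N}$ (the paper's Lemma \ref{le:convo}, i.e.\ Lemma 6.2 of \cite{MorozVanSchaftJFA}), sub/supersolution comparison forcing the balance $\theta=\frac{N-\alpha}{2-p}$ when $p<2$, and WKB/asymptotic integration of the radial equation with effective potential $V_\infty-(I_\alpha\ast\omega^p)\,\omega^{p-2}$ when $p\ge 2$, whose borderline cases $\alpha=N-1$ and $\alpha=N-\frac12$ generate the corrections $\tau_1$, $c_\gamma\abs{x}^\gamma$ and $\tau_2$ --- so it takes essentially the same approach as the cited proof and as the paper's own sketch.
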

The above result shows that the decay of the least action solutions strongly 
depends on the interaction of  the Riesz potential and on the nonlinearity.
First of all, when $p>2$ we see the same decay behavior as in the local
case (see \cite{BerLions}); while for $p<2$ the presence of the convolution term
forces the decay to be of polynomial type, more resembling the case of nonlocal
fractional operators (see \cite{FQT} and \cite{MorozVanSchaftJFA} p. 157-158). 
 
The threshold is $p=2$. As observed in \cite{MaPeScProc}, in this range we see different perturbations on the decay of $\omega$ depending on $\alpha$. 
In general, it holds
\begin{equation}\label{decay3} 
\omega(x)= (c+o(1)) \frac{e^{- \sqrt{V_{\infty}}Q(\abs{x})}}{\abs{x}^{\frac{N-1}{2}}} 
\quad \text{ as } \abs{x} \to \infty, 
\end{equation}
where 
\begin{equation}\label{Q} Q(t)= \int_{\nu}^{t} \sqrt{1 - \frac{\nu^{N-\alpha}}{s^{N-\alpha}}}  \, ds, 
\qquad  \nu^{N-\alpha} = \frac{1}{V_{\infty}} \frac{\Gamma(\frac{N-\alpha}{2})}{\Gamma(\frac{\alpha}{2}) \pi^{N/2} 2^{\alpha} } \int_{\mathbb{R}^N} \abs{\omega}^2, 
\end{equation}
and $\nu$ only depends on $\norm{\omega}_2^2$  (see \cite{MorozVanSchaftJFA}).
Then, taking into account the Taylor expansion of the square root, one can see 
that  \eqref{decay2} still holds when $\alpha < N-1$; while a 
perturbation  in the polynomial part occurs  if $\alpha=N-1$ (which includes the physical case $N=3$, $\alpha=2$, $p=2$),  and more and more 
perturbations appear  as $\alpha $ increases. 
   In particular, if $N-1 < \alpha \le N- \frac{1}{2}$, the decay becomes as stated
in \eqref{decaypertexp}.
As a last information, when $\alpha >N-\frac12$ the decay will include more and more terms in the Taylor expansion of the function $Q$
 (for more details see also Remark 6.1 in \cite{MorozVanSchaftJDE}).

In order to obtain analogous decay estimates on the convolution term
the following lemma will be crucial
\begin{lemma}\label{le:convo}
 Let $h \ge 0$, $h\in L^{\infty}$ such that 
\begin{equation}\label{bound}
\sup_{\mathbb{R}^N} h(x) (1+\abs{x})^{s} < + \infty, \qquad \text{for some $s >N$.} \end{equation}
 Then
\begin{equation}\label{eq:conv1} 
I_{\alpha} \ast h(x) = I_{\alpha}(x) \norm{h}_1 (1+ o(1)). 
\end{equation}
Moreover, let $f \in L^p_{\text{loc}}(\R^{N})$, $f \ge 0$, be such that 
\[
\dys \sup_{\R^{N}} f(x)(1+\abs{x})^{\eta} <+ \infty,\quad \text{with $p \eta >N$.}
\]
For every  $z_1,\,z_2\in \mathbb{R}^N$, it results
\begin{equation}\label{eq:conv2}
\limsup_{\abs{x} \to \infty} \abs{x-z_1}^{(N-\alpha)\frac{p-1}{p}} \abs{x-z_2}^{(N-\alpha)\frac{1}{p}} 
\int_{\mathbb{R}^N} \frac{f(y-z_1)^{p-1} f(y-z_2)}{\abs{y-x}^{N-\alpha }} \, dy < + \infty. \end{equation}
\end{lemma}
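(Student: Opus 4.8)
The plan is to establish both asymptotic estimates by exploiting the polynomial decay hypotheses on $h$ and $f$ together with the explicit form of the Riesz kernel, splitting the integration domain into a region near $x$ and a region far from $x$ in each case. The essential heuristic is that when $x$ is large, the kernel $|y-x|^{-(N-\alpha)}$ is, on the bulk of the mass of the integrand, nearly equal to $|x|^{-(N-\alpha)}$, while the contributions from near the singularity and from the tails of $h$ (resp.\ $f$) are negligible by comparison.

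\emph{Proof of \eqref{eq:conv1}.} First I would write
\[
I_\alpha \ast h(x) = A_\alpha \int_{\R^N} \frac{h(y)}{|x-y|^{N-\alpha}}\, dy
\]
and decompose $\R^N = B_{|x|/2}(0) \cup \left(B_{|x|/2}(0)\right)^c$. On $B_{|x|/2}(0)$ one has $|x-y| \ge |x|/2$, so $|x-y|^{-(N-\alpha)}$ is comparable to $|x|^{-(N-\alpha)}$; more precisely, for $y$ in this ball $|x-y|/|x| \to 1$ uniformly as $|x|\to\infty$ on the support where the mass concentrates, and dominated convergence (using $h \in L^1$, which follows from \eqref{bound} since $s>N$) gives that this piece equals $I_\alpha(x)\|h\|_1(1+o(1))$. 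On the complement $\left(B_{|x|/2}(0)\right)^c$ I would use the decay $h(y) \le C(1+|y|)^{-s}$ with $s>N$ to show the integral is $o(|x|^{-(N-\alpha)})$: splitting further into $\{|x-y|\le |x|/2\}$ (where $|y|\gtrsim |x|$, so $h$ is small and the singularity is integrable over a bounded-measure neighborhood) and $\{|x-y|>|x|/2, |y|>|x|/2\}$ (where both kernel and $h$ are controlled) yields decay faster than $|x|^{-(N-\alpha)}$. Assembling the pieces gives \eqref{eq:conv1}.

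\emph{Proof of \eqref{eq:conv2}.} Here I would set $g(y) = f(y-z_1)^{p-1} f(y-z_2)$, which by the decay hypothesis on $f$ satisfies $g(y) \le C(1+|y-z_1|)^{-\eta(p-1)}(1+|y-z_2|)^{-\eta}$, and I want to bound $|x-z_1|^{(N-\alpha)\frac{p-1}{p}}|x-z_2|^{(N-\alpha)\frac1p}\int g(y)|y-x|^{-(N-\alpha)}\,dy$. The natural strategy is to split $\R^N$ into three regions according to which of $z_1$, $z_2$, or $x$ the point $y$ is closest to. Near $y\approx x$ the factor $f(x-z_i)$ supplies decay matching the two prefactors (via $|x-z_i|^{-\eta} $ against the $|x-z_i|^{(N-\alpha)/p'}$ weights, using $p\eta > N > N-\alpha$); near $y \approx z_1$ or $y\approx z_2$ the kernel $|y-x|^{-(N-\alpha)} \approx |x-z_i|^{-(N-\alpha)}$ is the source of the prefactor cancellation and the remaining $f$-integral is finite by the local $L^p$ and tail bounds; in the far region all three decays combine. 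Tracking the exponents, the weighted integral stays bounded as $|x|\to\infty$, which is exactly \eqref{eq:conv2}.

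\textbf{The main obstacle} I anticipate is the bookkeeping of exponents in \eqref{eq:conv2}: one must verify in each subregion that the prefactor powers $(N-\alpha)\frac{p-1}{p}$ and $(N-\alpha)\frac1p$ are precisely absorbed and no region produces a divergent contribution. The delicate balance is the asymmetric pairing of $f^{p-1}$ (weighted at $z_1$) against $f$ (weighted at $z_2$), which is dictated exactly by the split $(N-\alpha)\frac{p-1}{p} + (N-\alpha)\frac1p = N-\alpha$; getting the Hölder-type estimates to respect this asymmetry, and checking that the condition $p\eta>N$ (rather than merely $\eta > N-\alpha$) is what guarantees integrability near $y=x$, is where the care is needed. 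The near-$x$ region is the most sensitive, since there the singularity of the kernel meets the slowest-decaying factor, and it is precisely the hypothesis $p\eta>N$ that closes the estimate there.
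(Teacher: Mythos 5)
Your proposal is correct, but it takes a genuinely different route from the paper's. For \eqref{eq:conv1} the paper simply cites Lemma 6.2 in \cite{MorozVanSchaftJFA}, whereas you reprove it; your decomposition and dominated-convergence argument is the standard proof of that cited result and closes correctly because $s>N$. For \eqref{eq:conv2} the paper does not split regions at all: it factors the kernel as $\abs{y-x}^{-(N-\alpha)\frac{p-1}{p}}\abs{y-x}^{-(N-\alpha)\frac1p}$, applies H\"older's inequality with exponents $\frac{p}{p-1}$ and $p$ to get, up to a multiplicative constant,
\[
\int_{\R^N}\frac{f(y-z_1)^{p-1}f(y-z_2)}{\abs{y-x}^{N-\alpha}}\,dy\le \left[\left(I_\alpha\ast f^p\right)(x-z_1)\right]^{\frac{p-1}{p}}\left[\left(I_\alpha\ast f^p\right)(x-z_2)\right]^{\frac1p},
\]
and then applies \eqref{eq:conv1} with $h=f^p$, $s=p\eta>N$. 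This buys two things your splitting does not: the asymmetric exponents $(N-\alpha)\frac{p-1}{p}$ and $(N-\alpha)\frac1p$ come out automatically rather than by exponent bookkeeping, and the resulting bound depends on $x,z_1,z_2$ only through $x-z_1$ and $x-z_2$, hence is uniform in $z_1,z_2$ --- which is what the paper actually uses later, since in Lemma \ref{stime epsilon precise} the estimate is invoked with $z_1=Rg_iz$, $z_2=Rg_jz$ varying with the large parameter $R$. Your argument proves the literal limsup statement (fixed $z_1,z_2$, $\abs{x}\to\infty$), for which the asymmetric pairing you worry about is in fact immaterial: there $\abs{x-z_1}\sim\abs{x-z_2}\sim\abs{x}$, so only the total power $N-\alpha$ matters; indeed your whole second part could be shortcut by noting that $h(y)=f(y-z_1)^{p-1}f(y-z_2)$ satisfies \eqref{bound} with $s=p\eta$, so \eqref{eq:conv2} is just \eqref{eq:conv1} applied to this $h$. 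One caution on execution: the region ``closest to $x$'' should be a ball such as $\{\abs{y-x}\le\abs{x}/2\}$, where $\abs{y-z_i}\gtrsim\abs{x}$ gives the integrand bound $C\abs{x}^{-p\eta}$ against kernel mass $C\abs{x}^{\alpha}$, closing precisely because $p\eta>N$, as you say; if instead you take the unbounded cell of points nearer to $x$ than to $z_1,z_2$, freezing $f(x-z_i)$ there fails (the kernel alone is not integrable at infinity), and you would have to interpolate, splitting the decay of the integrand between the gain in $\abs{x}$ and integrability in $\abs{y-x}$.
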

\begin{proof}
The first conclusion follows immediately from Lemma 6.2 in \cite{MorozVanSchaftJFA}.
In order to prove the second  one, we observe that 
\[
\abs{y-x}^{N-\alpha}=\abs{y-x}^{\frac{p-1}{p}(N-\alpha)} \abs{y-x}^{(N-\alpha)\frac{1}{p}}.
\]
Therefore, by applying H{\"o}lder's inequality, one has
\begin{align*}
\int_{\mathbb{R}^N} \frac{f(y-z_1)^{p-1} f(y-z_2)}{\abs{y-x}^{N-\alpha}} dy
&\le
		\left(\int \frac{f(y-z_1)^{p}}{\abs{y-x}^{N-\alpha}} \right)^{\frac{p-1}{p}} 
		\left(\int \frac{{f(y-z_2)}^{p}}
		{\abs{y-x}^{N-\alpha}} 
		\right)^{\frac{1}{p}}\\
		&=
		\left[\int \frac{f(y)^{p}dy}{\abs{y+z_{1}-x}^{N-\alpha}} \right]^{\frac{p-1}{p}} 
		\left[\int \frac{{f(y)}^{p}dy}
		{\abs{y+z_{2}-x}^{N-\alpha}} 
		\right]^{\frac{1}{p}}
		\\
		&=
		\left[\left( I_\alpha \ast f^p\right)(x-z_{1})\right]^{\frac{p-1}{p}}
		\left[\left( I_\alpha \ast f^{p}\right)(x-z_2)\right]^{\frac{1}{p}}.
\end{align*}
The conclusion follows by applying \eqref{eq:conv1}  with $h=f^p$, and $s=p \eta>N$. 
\end{proof}
As an immediate consequence of Lemma \ref{le:convo}, we get the following asymptotic  decay of the convolution term  
\begin{equation}\label{decayconv}
I_{\alpha} \ast \omega^p(x) = I_{\alpha}(x) \norm{\omega}^{p}_p (1+ o(1)). \end{equation}
Indeed, if $p<2$, \eqref{decay} yields that \eqref{bound} is satisfied by $h=\omega^p$   
with $s=p \frac{N-\alpha}{2-p}$. Note that $s >N$ as $p > \frac{2N}{2N-\alpha}$, which is always true in our setting, thanks to \eqref{range}.  Moreover, in the case $p \ge 2$, then  \eqref{bound} is satisfied by $h=\omega^p$ for any $s$. 

Let us conclude this section by introducing the threshold that will guide our study. 
Let $\Sigma$ be defined in \eqref{def:sigma}. Then
for every  $z\in \Sigma$ there are $g_1, \dots, g_{\ell(G) }\in G$ 
such that $g_{i}z\neq g_{j}z$ whenever $g_{i}\neq g_{j}$.
We denote 
with  $ \omega_{i, R}(x)$ a solution of the limit problem translated in $Rg_{i}z$,
namely
\begin{equation}\label{omegaR}
 \omega_{i, R}(x)=\omega(x- Rg_i z), \qquad \text{for $i=1,\dots,\ell(G).$}
\end{equation}
Then we define
\begin{equation}\label{defeps}
\begin{split}
 \varepsilon_R^{ij} &= \int_{\mathbb{R}^N} (I_{\alpha} \ast \omega_{i, R}^p) \omega_{i, R}^{p-1} \omega_{j, R}=\int_{{\R^{N}}}\left[\nabla \omega_{i,R}\cdot\nabla \omega_{j,R}+V_{\infty} \omega_{i,R} \omega_{j,R}\right],
\\
\eps_{R}&=\sum_{i\neq j}^{\ell(G)} \eps^{ij}_{R}.
\end{split} \end{equation}
In the following sections we will see that $\eps_{R} $ has different asymptotic  decays  depending on $p$. This will lead us to assume different decay
assumptions on the potentials $V$ in order to get our existence results.
Moreover, in order to prove that $c^{G}_{V}$ given in \eqref{defnehari}
is an action level where the Palais-Smale condition holds, we will evaluate 
${\mathcal I}_{V}$ on the competitor
\begin{equation}\label{defsigma} 
\chi_{R, z}=\sum_{i=1}^{\ell(G)}  \omega_{i, R}, \qquad \text{where 
$\omega_{i, R}$ is defined in \eqref{omegaR}}.
\end{equation}

%%%%%%%%%%%%%%%%%%%%%%%%%%%%%%%%%%%%%%%%%%%%%
%%%%%%%%%%%%%%%%%%%%%%%%%%%%%%%%%%%%%%%%%%%%%
\section{Case $p<2$}\label{sec:p<2} 
%%%%%%%%%%%%%%%%%%%%%%%%%%%%%%%%%%%%%%%%%%%%%
%%%%%%%%%%%%%%%%%%%%%%%%%%%%%%%%%%%%%%%%%%%%%

This section is devoted to the proof of Theorem \ref{mainthm1}, which 
will be carried on by minimizing  ${\mathcal I}_{V}$ on ${\mathcal N}^{G}_{V}$.
Since it is known that the  Palais-Smale condition is satisfied for any level below a suitable value  which depends on $\ell(G)$ and on $c_\infty^G$, (see Proposition 3.1 in \cite{CingolaniClappSecchi}),  the main point consists in finding 
a  competitor in ${\mathcal N}^{G}_{V}$ showing that the minimum value
belongs to the  range where compactness holds.

In order to do this evaluation, we first analyze  the decay $\eps_R$ as $R\to \infty$ (see  Lemma  \ref{est epsilon}), and in Lemma \ref{estimatesV} we show that the part involving the potential is actually small with respect to  $\eps_R$. 
Then,  we will analyze the behavior of integrals involving the nonlinearities;  
as explained in the Introduction, the presence of a power $p<2$ prevents one from exploiting algebraic results for power-like nonlinearities, (see \cite{BahriLi, clasal}),  but we will take care of the non-locality feature of the problem
by performing  a very careful analysis of the integrals involved, and on the behavior of $\eps_R$ (see Lemma \ref{stime epsilon precise} and Proposition \ref{prop:somma}). 
Then in subsection \ref{proofs1} we will conclude the proof  also exploiting 
the approach in \cite{SzulkinWeth} as for $p<2$, ${\mathcal N}^{G}_{V}$ is not of class $C^{1}$.

%%%%%%%%%%%%%%%%%%%%%%%%%%%%%%%%%%%%%%%%%%%%%%%
\subsection{Asymptotical analysis }\label{sec:estimates1}
Let us first analyze  the asymptotic decay of $\eps_R$. 
\begin{lemma}\label{est epsilon}
Let $p$ satisfy \eqref{pless2}. Then,  for $R$ large enough
\[
\varepsilon_R^{ij} \sim R^{-\frac{N-\alpha}{2-p}}, \quad \text{where $\varepsilon_R^{ij} $ is introduced in \eqref{defeps}}.
\]
\end{lemma}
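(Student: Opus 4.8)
The plan is to estimate the quantity
\[
\varepsilon_R^{ij} = \int_{\mathbb{R}^N} (I_{\alpha} \ast \omega_{i, R}^p)\, \omega_{i, R}^{p-1}\, \omega_{j, R}
\]
by translating variables so that the two bumps are centred at the origin and at the point $R e_{ij}:=R(g_j z - g_i z)$, whose modulus is comparable to $R$ (indeed bounded below by a constant times $R$ since $g_iz\neq g_jz$ and we may normalise $|z|$). After the change of variable $y\mapsto y+Rg_iz$ the integral becomes
\[
\varepsilon_R^{ij}=\int_{\mathbb{R}^N}\big(I_\alpha\ast\omega^p\big)(y)\,\omega(y)^{p-1}\,\omega\big(y-Re_{ij}\big)\,dy.
\]
The idea is that the integrand is the product of a slowly-varying factor $(I_\alpha\ast\omega^p)(y)\,\omega(y)^{p-1}$, which is concentrated near the origin and decays polynomially, against a translated copy $\omega(\,\cdot-Re_{ij})$ that, near the origin, is of size $\omega(Re_{ij})\sim R^{-\frac{N-\alpha}{2-p}}$ by the decay law \eqref{decay}. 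So the expected leading order is exactly $R^{-\frac{N-\alpha}{2-p}}$ times a finite positive constant, and the whole game is to show this factor does not degenerate or blow up.

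\emph{For the upper bound} I would split $\mathbb{R}^N$ into the region near $0$, the region near $Re_{ij}$, and the far region. Using \eqref{decayconv} we have $(I_\alpha\ast\omega^p)(y)\sim I_\alpha(y)\|\omega\|_p^p$, which decays like $|y|^{-(N-\alpha)}$, while $\omega(y)^{p-1}$ decays like $|y|^{-(p-1)\frac{N-\alpha}{2-p}}$. Near the origin the translated factor is essentially constant and equal to $(c+o(1))R^{-\frac{N-\alpha}{2-p}}$, yielding the claimed order provided the remaining integral $\int (I_\alpha\ast\omega^p)\,\omega^{p-1}$ converges; the integrability at infinity must be checked against the polynomial tails, and it is precisely here that the condition \eqref{pless2} (equivalently the range \eqref{range}) will be needed to guarantee the exponents combine to give a convergent integral. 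The contributions from the region near $Re_{ij}$ and from the far field are then shown to be of strictly smaller order by the same decay bookkeeping, so they do not affect the leading term.

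\emph{For the lower bound}, which secures the $\sim$ rather than merely $O(\cdot)$, I would restrict the integral to a fixed ball $B_\rho(0)$ of radius independent of $R$: on this ball $(I_\alpha\ast\omega^p)\,\omega^{p-1}$ is bounded below by a positive constant, and $\omega(y-Re_{ij})\ge (c-o(1))\,R^{-\frac{N-\alpha}{2-p}}$ uniformly for $y\in B_\rho(0)$ by the uniform decay asymptotics, so integrating gives a lower bound of the correct order. \textbf{The main obstacle} I anticipate is making the far-field estimate rigorous: because the decay of $\omega$ is only polynomial (not exponential as in the Schrödinger case), the tails are genuinely heavy, and one must verify carefully that the overlap integral in the intermediate region does not produce a contribution of the same order $R^{-\frac{N-\alpha}{2-p}}$ as the main term. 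This is exactly the delicate nonlocal interaction alluded to in the Introduction and treated in \autoref{stime epsilon precise}; the convolution estimate \eqref{eq:conv2} of \autoref{le:convo} is the natural tool to control these cross terms, and I would lean on it to bound the portion of the integral away from the two centres.
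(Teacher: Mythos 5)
Your overall strategy coincides with the paper's own proof. For the upper bound the paper combines \eqref{decay} and \eqref{decayconv} to dominate the integrand by $C(1+|x-Rg_iz|)^{-\frac{N-\alpha}{2-p}}(1+|x-Rg_jz|)^{-\frac{N-\alpha}{2-p}}$ (your exponent arithmetic $(N-\alpha)+(p-1)\frac{N-\alpha}{2-p}=\frac{N-\alpha}{2-p}$) and then invokes Lemma \ref{lemma cm}, which is precisely the region-splitting bookkeeping you describe, packaged once and for all; the condition \eqref{pless2} enters exactly where you predict, to make the relevant exponent the maximum in that lemma (equivalently, to make the tails integrable). For the lower bound the paper does what you do: restrict to a unit ball centred at $Rg_iz$, bound $(I_\alpha\ast\omega^p)\,\omega^{p-1}$ below there by a positive constant using positivity and radial monotonicity of $\omega$, and use $d_{ij}=|g_iz-g_jz|\le 2$ to get $\omega_{j,R}\ge CR^{-\frac{N-\alpha}{2-p}}$ on that ball.

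However, one subsidiary claim in your plan is false: the contribution of the region near $Re_{ij}$ is \emph{not} of strictly smaller order. On $B_{\rho R}(Re_{ij})$ one has $(I_\alpha\ast\omega^p)(y)\,\omega^{p-1}(y)\sim C|y|^{-\frac{N-\alpha}{2-p}}\sim CR^{-\frac{N-\alpha}{2-p}}$, while $\int_{B_{\rho R}(Re_{ij})}\omega(y-Re_{ij})\,dy\to\norm{\omega}_1\in(0,\infty)$ (note $\omega\in L^1(\R^N)$ precisely because of \eqref{pless2}); hence this region contributes $\sim R^{-\frac{N-\alpha}{2-p}}$, the \emph{same} order as your near-origin main term. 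This is exactly the nonlocal spreading phenomenon the paper isolates later in Lemma \ref{stime epsilon precise}, where the piece $\eps^{ij}_{ji}$ --- outer variable near the far bump --- is retained at leading order rather than discarded. The slip is harmless for the present lemma, because the statement is only a two-sided bound up to constants: the upper bound needs each region to contribute $O(R^{-\frac{N-\alpha}{2-p}})$, not $o(R^{-\frac{N-\alpha}{2-p}})$, and that is what the bookkeeping actually yields; only the far region, of order $R^{N-2\frac{N-\alpha}{2-p}}$, is genuinely negligible under \eqref{pless2}. So your proof goes through once ``strictly smaller order'' is weakened to ``of at most the same order'', but as written that step would fail if you tried to prove it, and the distinction becomes essential in the finer expansion of $\eps_R$ used afterwards.
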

\begin{remark}\label{rem:simp<2}
Lemma \ref{est epsilon} shows that for $p<2$ the distance between any two points
in the orbit of $z$ does not play  any role in determining the  decay of $\eps_{R}$.
This marks a relevant difference with the local case and with  the case $p\geq 2$ (see  Lemma \ref{est epsilon2}).
\end{remark}
\begin{remark}\label{dij}
Let $d_{ij}:=\abs{g_i z-g_j z}$. 
Notice that Lemma \ref{le:sigmamu} and Remark \ref{re:muG} imply that $0<\mu_G \le \mu(Gz) \le d_{ij}=\abs{g_i z-g_j z} \le 2$. Indeed, as $g_i$ is an isometry, $\abs{g_i z}=\abs{z}=1$, and distinct points on the sphere have distance $\le2$. 
\end{remark}

\begin{proof}
Let us first observe that, exploiting \eqref{defeps}, \eqref{decay} and \eqref{decayconv}, one has
\begin{align*}
\varepsilon_R^{ij} &\le 
 C \int_{\mathbb{R^{N}}} (1+\abs{x-Rg_i z})^{-\frac{N-\alpha}{2-p}}(1+\abs{x-Rg_j z})^{-\frac{N-\alpha}{2-p}}dx.
\end{align*}
We now apply Lemma \ref{lemma cm} with $a=a'=-\frac{N-\alpha}{2-p}$ and $\xi=Rg_i z-Rg_jz$, and take into account \eqref{range} to get 
$ \varepsilon_R^{ij} \le C R^{- \frac{N-\alpha}{2-p}}. $

In order to get the estimates from below, one takes into consideration that
$\omega$ is positive, radially symmetric and decreasing to  obtain  that 
\begin{align*} \nonumber \inf_{x \in B_1(0)} I_{\alpha} \ast \omega^p(x) &\ge  \inf_{x \in B_1(0)} \frac{A_{\alpha}}{R_0^{N-\alpha}} \int_{B_{R_0}(x)} \omega^p(y) \, dy 
\ge 
A_{\alpha} \frac{\abs{B_{R_0}(0)}}{R_0^{N-\alpha}} \min_{y \in B_{R_0+1}(0)} \omega^p(y) 
\\&\ge C >0. \end{align*}
Hence, again exploiting \eqref{decay}, one has (denoting with $C$ possibly different constants)
\begin{align}\label{stima below eps}
\nonumber \varepsilon_R^{ij} &\ge 
\int_{B_1(Rg_i z)} (I_{\alpha} \ast \omega_{i, R}^p) \omega_{i, R}^{p-1} \omega_{j, R} 
= \int_{B_1(0)} (I_{\alpha} \ast \omega^p(x)) \omega^{p-1}(x) \omega(x- R(g_j z -g_i z)) \, dx \\
\nonumber  &\ge  \inf_{x \in B_1(0)} ( I_{\alpha} \ast \omega^p(x)  \omega^{p-1}(x) ) \int_{B_1(0)} \omega(x- R(g_j z-g_i z)) \, dx \\
& \ge C \int_{B_1(0)} (1+\abs{x-R(g_j z-g_i z)})^{-\frac{N-\alpha}{2-p}} \ge CR^{- \frac{N-\alpha}{2-p}}, 
\end{align}
where the last inequality can be deduced observing that, 
as pointed out in Remark \ref{dij} $d_{ij}\leq 2$  so that, 
if $\abs{x} <1$, the following inequality holds  for every $R \ge 1$
\[
 1+ \abs{x-R(g_j z-g_i z)} < 1 + \abs{x} + R\abs{g_j z-g_i z} < 4R. \qedhere
 \]
\end{proof}
\begin{lemma}\label{estimatesV}
If $p$ satisfies \eqref{pless2} and $V$ satisfies \eqref{Vless2} then it holds
\[ \mathcal{A}_V:=\int_{\mathbb{R}^N} (V(x) - V_{\infty})\left(\chi_{R, z}\right)^2 \le  o(\varepsilon_R). \]
\end{lemma}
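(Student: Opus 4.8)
The plan is to bound $\mathcal{A}_V$ from above by discarding the (possibly negative) sign of $V-V_\infty$ and invoking only the decay hypothesis \eqref{Vless2}. Since $(\chi_{R,z})^2\ge0$ and $V(x)-V_\infty\le A_0(1+\abs{x})^{-\beta}$, we immediately get
\[
\mathcal{A}_V\le A_0\int_{\R^N}(1+\abs{x})^{-\beta}(\chi_{R,z})^2.
\]
Two elementary facts drive the estimate. First, writing $s=\frac{N-\alpha}{2-p}$, the lower bound $p>\frac{N+\alpha}N$ in \eqref{pless2} is equivalent to $s>N$; since \eqref{Vless2} gives $\beta>s$, we in particular have $\beta>N$, so $(1+\abs{x})^{-\beta}\in L^1(\R^N)$. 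Second, from \eqref{decay} together with the boundedness of $\omega$ one has the global bound $\omega(y)\le C(1+\abs{y})^{-s}$, and by the triangle inequality $\norm{\chi_{R,z}}_2\le\ell(G)\norm{\omega}_2$, so $\norm{\chi_{R,z}}_2^2$ stays bounded as $R\to\infty$.

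Next I would split the integral according to whether $\abs{x}\ge R/2$ or $\abs{x}<R/2$, recalling that all the centres $Rg_iz$ lie at distance $R$ from the origin because $z\in\Sigma\subset S^{N-1}$. On $\{\abs{x}\ge R/2\}$ the potential factor is small, $(1+\abs{x})^{-\beta}\le CR^{-\beta}$, so this part is controlled by $CR^{-\beta}\norm{\chi_{R,z}}_2^2\le C'R^{-\beta}$. On $\{\abs{x}<R/2\}$, instead, every centre is far: $\abs{x-Rg_iz}\ge R-\abs{x}>R/2$ for each $i$, whence the global decay bound yields $\omega_{i,R}(x)\le CR^{-s}$ and thus $\chi_{R,z}(x)\le\ell(G)CR^{-s}$. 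Therefore this part is at most $CR^{-2s}\int_{\R^N}(1+\abs{x})^{-\beta}\,dx\le C'R^{-2s}$, the integral being finite because $\beta>N$. Combining the two regions gives $\mathcal{A}_V\le C(R^{-\beta}+R^{-2s})$, and since $\beta>s$ and $2s>s$ this is $o(R^{-s})$. As Lemma \ref{est epsilon} gives $\varepsilon_R\sim R^{-s}$, we conclude $\mathcal{A}_V\le o(\varepsilon_R)$.

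The delicate point to keep in mind is \emph{why} one cannot simply bound $(1+\abs{x})^{-\beta}\le1$ and use $\int\omega_{i,R}\omega_{j,R}$. Because of the merely polynomial decay of $\omega$, the cross terms $\int\omega_{i,R}\omega_{j,R}$ are themselves of order $R^{-s}$, exactly the size of $\varepsilon_R$ (cf. Remark \ref{rem:simp<2} and Lemma \ref{est epsilon}), so a crude bound would only give $\mathcal{A}_V=O(\varepsilon_R)$, not $o(\varepsilon_R)$. The gain comes from a geometric observation: the $L^2$-mass of $\chi_{R,z}$ concentrates near the centres, which sit at distance $R$ from the origin where $V-V_\infty$ has already decayed like $R^{-\beta}$, while the only region of small $\abs{x}$ is far from all centres and there $\chi_{R,z}$ is itself of order $R^{-s}$. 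It is precisely the two strict inequalities $\beta>s$ and $s>N$ --- the former built into \eqref{Vless2}, the latter equivalent to \eqref{pless2} --- that force both contributions to be strictly smaller than $R^{-s}$.
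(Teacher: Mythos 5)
Your proof is correct. Both arguments rest on the same two numerical facts, namely $\beta>s$ from \eqref{Vless2} and $s=\frac{N-\alpha}{2-p}>N$ from \eqref{pless2}, together with the polynomial bound $\omega(y)\le C(1+\abs{y})^{-s}$ coming from \eqref{decay} and the lower bound $\varepsilon_R\gtrsim R^{-s}$ of Lemma \ref{est epsilon}; where you differ from the paper is in how the resulting integral is estimated. The paper does not split regions by hand: it bounds $(V-V_\infty)_+\,\omega_{i,R}^2\le C (1+\abs{x})^{-\beta}(1+\abs{x-Rg_iz})^{-2s}$ and invokes the black-box convolution estimate Lemma \ref{lemma cm} (Lemma 4.1 of \cite{ClappMaia2}), which yields $CR^{-\tau}$ with $\tau=\min\{\beta,\,2s,\,\beta+2s-N\}>s$. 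Your two-region decomposition $\{\abs{x}\ge R/2\}\cup\{\abs{x}<R/2\}$ is in effect a self-contained proof of the special case of that lemma needed here: because both factors are separately integrable ($\beta>N$ and $2s>N$), keeping the pointwise smallness of whichever factor is far from its center and integrating the other one suffices, and you even recover the same exponent, since under $\beta>N$, $2s>N$ one has $\min\{\beta,2s,\beta+2s-N\}=\min\{\beta,2s\}$. What your route buys is independence from the appendix lemma and a transparent geometric picture (mass of $\chi_{R,z}$ concentrates at distance $R$, where $V-V_\infty$ has already decayed); what the paper's route buys is brevity and reuse of a statement it needs anyway. One small point in your favor: you estimate $\int(V-V_\infty)\chi_{R,z}^2$ directly, whereas the paper's displayed inequality only treats the terms $\int(V-V_\infty)\omega_{i,R}^2$, leaving the cross terms $\omega_{i,R}\omega_{j,R}$ to an implicit Cauchy--Schwarz (or a repetition of the same lemma); your version makes this unnecessary.
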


\begin{proof}
The conclusion is a direct consequence of  \eqref{decay} and  Lemma \ref{lemma cm} as
\[  \int_{\mathbb{R}^N} (V(x) - V_{\infty}) \omega_{i, R}^2 \le C \int_{\mathbb{R}^N}  (1+\abs{x})^{-\beta}(1+\abs{x-Rg_i z})^{-2\frac{N-\alpha}{2-p}}\le CR^{-\tau} \]
where $\tau =\min \{ \beta, 2\frac{N-\alpha}{2-p}, \beta + 2\frac{N-\alpha}{2-p} - N \} > \frac{N-\alpha}{2-p}$. 
\end{proof}
In order to compare the asymptotic  behavior of the nonlinearity term with respect to $\eps_{R}$, we first need to deepen our knowledge of the behavior of  the threshold  $\eps_R$. 

Having in mind \eqref{defeps}, let us define, for $i,\,j=1,\dots,\ell(G)$
\begin{equation}\label{defeps restr}  
\varepsilon^{ij}_{k l}=\iint_{{\Rcal}_{kl}} 
\frac{ \omega_{i, R}^p(\zeta) \omega_{i, R}^{p-1}(\theta) \omega_{j, R}(\theta)}{|\theta-\zeta|^{N-\alpha}}
d\zeta d\theta 
\end{equation}
where the set ${\Rcal}_{kl}$ is defined,
for $k,\,l=1,\dots,\ell(G)$ and for $\rho \in \left(0, \min_{i, j} d_{ij}/2\right)$ fixed,
as 
\begin{equation}\label{eq:defRkl}
{\Rcal}_{kl}:= 
\left\{ (\theta, \zeta): \abs{\theta-Rg_k z} < \rho R, \;\abs{\zeta-Rg_l z} < \rho R \right\}.
\end{equation}
In the following lemma we detect all the contribution terms in 
$\eps_R$ that actually play a relevant role.
We will see that in this study the presence of the convolution term will be important.
\begin{lemma}\label{stime epsilon precise}
The following expansion holds 
\[ \eps_R = \sum_{i \ne j} (\eps^{ij}_{ij}+\eps^{ij}_{ji}+\eps^{ij}_{ii}) + o(\eps_R). \]
\end{lemma}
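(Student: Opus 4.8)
The plan is to expand each $\eps^{ij}_R$ as a double integral in the variables $(\theta,\zeta)\in\R^N\times\R^N$, following \eqref{defeps} and \eqref{defeps restr}, and to split the domain according to the position of $\theta$ and $\zeta$ relative to the balls $B_k:=B_{\rho R}(Rg_k z)$, $k=1,\dots,\ell(G)$. Since $\rho<\min_{i,j}d_{ij}/2$ and the centres $Rg_k z$ are mutually at distance $\ge R\min_{i,j}d_{ij}$, these balls are pairwise disjoint for $R$ large, and $\R^N\times\R^N$ decomposes into the $\ell(G)^2$ regions $\mathcal R_{kl}$ of \eqref{eq:defRkl} together with the ``exterior'' regions in which $\theta$ or $\zeta$ lies in $E:=\R^N\setminus\bigcup_k B_k$. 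Because $\eps_R=\sum_{i\ne j}\eps^{ij}_R$ and $\ell(G)$ is finite, it suffices to prove, for each fixed pair $i\ne j$, that every $\mathcal R_{kl}$ with $(k,l)\notin\{(i,i),(i,j),(j,i)\}$ and every exterior region contributes $o(\eps_R)$, i.e. $o\big(R^{-(N-\alpha)/(2-p)}\big)$ by Lemma \ref{est epsilon}; what remains of the decomposition is then exactly $\eps^{ij}_{ii}+\eps^{ij}_{ij}+\eps^{ij}_{ji}$.

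The estimates of the individual pieces rest on a power counting in $R$. On a ball $B_k$ the factor $\omega_{m,R}$ is of order one (and peaked at the centre) when $k=m$, while by \eqref{decay} it is of order $R^{-(N-\alpha)/(2-p)}$, essentially constant, when $k\ne m$; the same applies to $\omega_{m,R}^{p}$ and $\omega_{m,R}^{p-1}$. The kernel $|\theta-\zeta|^{-(N-\alpha)}$ is of order $R^{-(N-\alpha)}$ whenever $\theta,\zeta$ lie in two distinct balls, and is locally integrable when they lie in the same ball. Two integrability facts, both guaranteed in our range by \eqref{pless2} and \eqref{range}, are used repeatedly: $\omega\in L^1$, since \eqref{pless2} gives $(N-\alpha)/(2-p)>N$, and $\omega^p\in L^1$, since $p(N-\alpha)/(2-p)>N$, as already noted after \eqref{decayconv}. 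When $\theta$ sits inside $B_i$ the inner convolution is controlled by the asymptotics \eqref{eq:conv1} of Lemma \ref{le:convo}, namely $I_\alpha\ast\omega_{i,R}^p(\theta)=I_\alpha(\theta-Rg_iz)\|\omega\|_p^p(1+o(1))$; when instead $\zeta$ ranges over $B_i$ with $\theta$ far away, one bounds the convolution by $R^{-(N-\alpha)}\|\omega\|_p^p$. Carrying out this bookkeeping, each excluded ball--ball region loses at least one additional power of $R$ relative to $R^{-(N-\alpha)/(2-p)}$: the region $(j,j)$, for instance, penalises simultaneously $\omega_{i,R}^p(\zeta)$ and $\omega_{i,R}^{p-1}(\theta)$, while any region meeting a third centre penalises every peaked factor. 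Hence all of them are $o(\eps_R)$.

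The genuinely delicate part, and the step I expect to be the main obstacle, is the control of the exterior regions, where $\theta$ and/or $\zeta$ roam over the unbounded set $E$. Here the polynomial decay of $\omega$ forbids the crude pointwise bounds that suffice in the exponential (Schr\"odinger) case: one must integrate a product of two or three slowly decaying bumps centred at the far-apart points $Rg_iz$ and $Rg_jz$. This is precisely the situation addressed by the product--convolution estimate \eqref{eq:conv2} of Lemma \ref{le:convo} and by the elementary product--integral estimate of Lemma \ref{lemma cm}, already exploited in Lemmas \ref{est epsilon} and \ref{estimatesV}: they turn an integral of the form $\int(1+|x-\xi_1|)^{a}(1+|x-\xi_2|)^{a'}\,dx$ into a negative power of $|\xi_1-\xi_2|\sim R$, and thereby produce the comparison with $R^{-(N-\alpha)/(2-p)}$ with a strictly better exponent. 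Collecting the finitely many resulting estimates yields $\eps^{ij}_R=\eps^{ij}_{ij}+\eps^{ij}_{ji}+\eps^{ij}_{ii}+o(\eps_R)$ for each $i\ne j$, and summing over the finitely many pairs gives the claim.
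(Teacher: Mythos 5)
Your overall strategy---expanding each $\eps^{ij}_R$ as a double integral, decomposing the domain into ball--ball concentration regions and exterior regions, and showing by power counting that everything outside $\Rcal_{ii}\cup\Rcal_{ij}\cup\Rcal_{ji}$ is $o(\eps_R)$---is exactly the paper's, up to bookkeeping: the paper uses the coarser decomposition $\Omega_1=(B_i\cup B_j)^c\times\R^N$, $\Omega_2=(B_i\cup B_j)\times(B_i\cup B_j)^c$ and $\Rcal_{jj}$, absorbing the balls around third centres into the exterior pieces. Your treatment of the ball--ball regions is sound; in particular your observation that $\Rcal_{jj}$ penalises both $\omega_{i,R}^p(\zeta)$ and $\omega_{i,R}^{p-1}(\theta)$ reproduces the paper's bound $CR^{-(2p-1)\frac{N-\alpha}{2-p}}R^{\alpha}=o(\eps_R)$.

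However, the step you yourself flag as the main obstacle---the exterior regions---is not closed by the tools you cite, as you have stated them. Lemma \ref{lemma cm} with $a=a'=-\frac{N-\alpha}{2-p}$ gives $\int_{\R^N}(1+\abs{x-\xi_1})^{a}(1+\abs{x-\xi_2})^{a'}\,dx\le C\abs{\xi_1-\xi_2}^{\tau}$ with $\tau=\max\{a,a',a+a'+N\}=-\frac{N-\alpha}{2-p}$, because \eqref{pless2} forces $\frac{N-\alpha}{2-p}>N$ and hence $a+a'+N<a$. So the full-space lemma returns \emph{exactly} the order of $\eps_R$ (cf.\ Lemma \ref{est epsilon}), not a strictly better exponent: the dominant terms $a$ and $a'$ in that maximum come precisely from neighbourhoods of the two centres, i.e.\ from the regions you have already excluded. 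To gain the improvement you must exploit the restriction of the integration domain to $\{\abs{x-Rg_iz}\ge\rho R\}\cap\{\abs{x-Rg_jz}\ge\rho R\}$, on which only the cross term survives, giving $CR^{a+a'+N}=CR^{N-2\frac{N-\alpha}{2-p}}$, which is $o\bigl(R^{-\frac{N-\alpha}{2-p}}\bigr)$ again by \eqref{pless2}. This is what the paper does: after reducing to a single-variable integrand via \eqref{decayconv} (on $\Omega_1$) or via \eqref{eq:conv2} (on $\Omega_2$), it splits the exterior into the sets $E^{\pm}$ of \eqref{eq:E+} according to which centre is closer, bounds the smaller factor by the larger one, and integrates $r^{-2\frac{N-\alpha}{2-p}+N-1}$ from $\rho R$ to $\infty$ in polar coordinates. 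Once you replace the bare citation of Lemma \ref{lemma cm} by this domain-restricted tail estimate, your argument coincides with the paper's proof.
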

\begin{proof}
Taking into account \eqref{defeps}, we need to show that,
for every  $(i, j)$,  $\eps^{ij}_{R}$ restricted to the set 
$\left( \Rcal_{ij} \cup \Rcal_{ji} \cup \Rcal_{ii} \right)^c $
is $o(\eps_R)$.
First notice that 
\[
\left( \Rcal_{ij} \cup \Rcal_{ji} \cup \Rcal_{ii}  \right)^c = \Omega_1 \cup \Omega_2 \cup \Rcal_{jj}, \]
where
\begin{align*}
 \Omega_1 &=(B_i \cup B_j)^c \times \R^N\quad \text{where } B_k =  B_{\rho R}(R g_k z),
 \\
\Omega_2 &= (B_i \cup B_j) \times (B_i \cup B_j)^c.
%A_1&=\{ \abs{\zeta - Rg_i z} < \rho R,\, \abs{\theta - R g_j z} > \rho R,\, 
%\abs{\theta - R g_i z} > \rho R \}, 
%\\
%A_2&=\{ \abs{\zeta - Rg_j z} < \rho R,\, \abs{\theta - R g_j z} > \rho R, \,\abs{\theta - R g_i z} > \rho R \}, 
%\\
%A_3&=\{ \abs{\theta - Rg_j z} < \rho R, \,\abs{\zeta - R g_j z} > \rho R,\, \abs{\zeta - R g_i z} > \rho R \}, 
%\\
% A_4&=\{ \abs{\theta - Rg_i z} < \rho R, \,\abs{\zeta - R g_j z} > \rho R, \,\abs{\zeta - R g_i z} > \rho R \}. 
\end{align*}
In  Figure \ref{fig:epsdiv}  we draw an example for $i=1,\,j=2$, $\ell(G)=2$.
\begin{figure}[ht]
\includegraphics[width=0.65\textwidth]{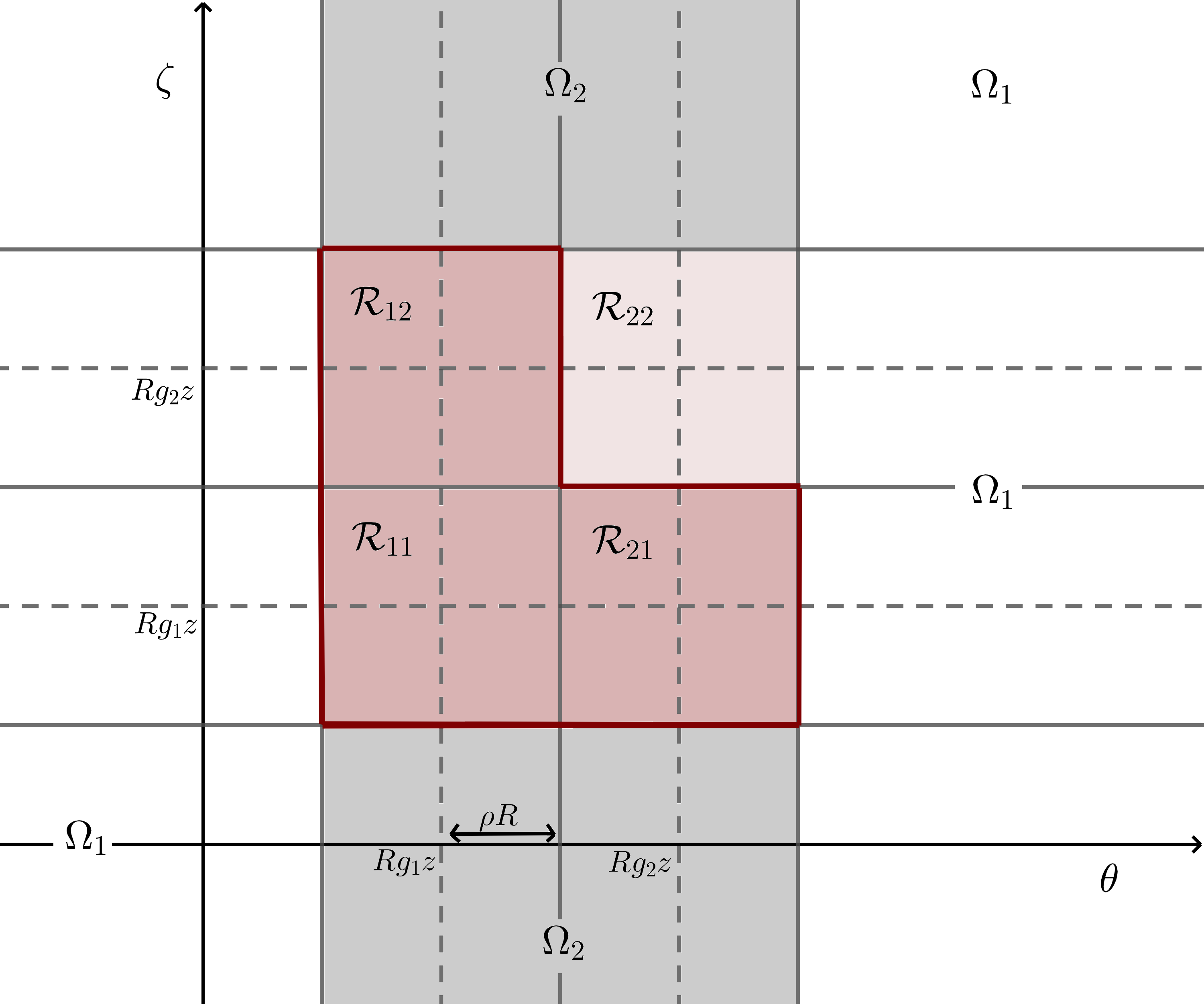}
\caption{\label{fig:epsdiv}}
\end{figure}
Let us start estimating the integral on $\Omega_1$ and 
define  the following subsets of $\R^{N}$
\begin{equation}\label{eq:E+}
 \begin{split}
&E^+=\{ \theta\in \R^{N} : \abs{\theta-R g_j z} > \abs{\theta-R g_i  z} \}, 
\\
& E^-=\{ \theta \in \R^{N} : \abs{\theta-R g_j z} < \abs{\theta-R g_i  z} \}.
\end{split}\end{equation}
Then, exploiting \eqref{decay}, and  \eqref{decayconv}  we have 
\begin{align*}
 \iint_{\Omega_1}  \frac{\omega^p_{i, R}(\zeta) \;
\omega^{p-1}_{i, R}(\theta)\omega_{j, R}(\theta) }{\abs{\theta-\zeta}^{N-\alpha}} \, d\zeta d\theta 
&\le 
 \int_{(B_i \cup B_j)^c}  
( I_\alpha \ast \omega^p_{i, R}(\theta)) \omega^{p-1}_{i, R}(\theta) \omega_{j, R}(\theta)\;d\theta
 \\ 
&   \leq
 C \int_{(B_i \cup B_j)^c} 
\abs{\theta-R g_i z}^{-\frac{N-\alpha}{2-p}}\abs{\theta- R g_j z}^{-\frac{N-\alpha}{2-p}} \,d\theta
\\  &
 \le 
C  \int_{(B_i \cup B_j)^c\cap E^+ } {\abs{\theta-R g_i z}^{-2\frac{N-\alpha}{2-p}}} \,d\theta 
  \\ 
 & \quad \quad + C \int_{(B_i \cup B_j)^c\cap E^-} {\abs{\theta- R g_j z}^{-2\frac{N-\alpha}{2-p}}}  \,d\theta \\
   & \le  C \int_{\rho R}^{+\infty} r^{-2\frac{N-\alpha}{2-p} + N-1} \, dr  
  %\\& 
  =  CR^{N-2\frac{N-\alpha}{2-p}} 
 =o(\varepsilon_R),
\end{align*}
where the last inequality can be deduced  noting that $(B_i \cup B_j)^c \cap E^+ \subset (B_i)^c$ (similarly $(B_i \cup B_j)^c\cap E^- \subset (B_j)^c$), and 
applying Lemma \ref{est epsilon}.

Now we estimate the integral on $\Omega_2$.
We have, exchanging integrals, and exploiting  \eqref{decay} and \eqref{eq:conv2}, with $f=\omega$, $z_{1}=Rg_{i}z$ and $z_{2}=Rg_{j}z$  
\begin{align*}
 \iint_{\Omega_2}  \frac{\omega^p_{i, R}(\zeta) \;
\omega^{p-1}_{i, R}(\theta)\omega_{j, R}(\theta) }{\abs{\theta-\zeta}^{N-\alpha}} \, d \zeta d\theta 
&\le \int_{(B_i \cup B_j)^c }\omega^p_{i, R}(\zeta) d \zeta \int_{\R^N} \frac{
\omega^{p-1}_{i, R}(\theta)\omega_{j, R}(\theta) }{\abs{\theta-\zeta}^{N-\alpha}} \, d\theta 
\\
&\le C \int_{(B_i \cup B_j)^c }  
\frac{\omega^{p}_{i, R}(\zeta) }{\abs{\zeta-R g_i z}^{(N-\alpha)\frac{p-1}{p}} \abs{\zeta-R g_j z}^{\frac{N-\alpha}{p}}} \, d\zeta  
\\
&\le C \int_{(B_i \cup B_j)^c }  
\abs{\zeta-Rg_i z}^{-\frac{(N-\alpha)(3p-2)}{p(2-p)}}\abs{\zeta- R g_j z}^{-\frac{N-\alpha}{p}} \, d\zeta .
\end{align*}
Then splitting in $E^{+}$ and in $ E^{-}$ (see \eqref{eq:E+}) and applying
Lemma \ref{est epsilon} one has
\begin{align*}
 \iint_{\Omega_2} & \frac{\omega^p_{i, R}(\zeta) \;
\omega^{p-1}_{i, R}(\theta)\omega_{j, R}(\theta) }{\abs{\theta-\zeta}^{N-\alpha}} \, d \zeta d\theta 
\le
 C \int_{(B_i \cup B_j)^c\cap E^+ } {\abs{\zeta-R g_i z}^{-2\frac{N-\alpha}{2-p}}} \,d\zeta 
  \\ 
&  + C \int_{(B_i \cup B_j)^c\cap E^-} {\abs{\zeta- R g_j z}^{-2\frac{N-\alpha}{2-p}}}  \,d\zeta 
 %\\&
  \le C R^{-2\frac{N-\alpha}{2-p} + N} = o(\eps_R).
\end{align*}
To complete the proof we have to study $\eps_{jj}^{ij}$.  Recall that $\Rcal_{jj}=
B_{j}\times B_{j}$; moreover, \eqref{decay} yields
\[
\begin{split}
 \iint_{\Rcal_{jj}} & \frac{\omega^p_{i, R}(\zeta) \;
\omega^{p-1}_{i, R}(\theta)\omega_{j, R}(\theta) }{\abs{\theta-\zeta}^{N-\alpha}} \,  d\zeta d\theta \leq
\\ 
&\le C  
\int_{B_{j}} \frac{ d\zeta}{(1+\abs{\zeta- R g_i z})^{p\frac{N-\alpha}{2-p}} }  
\int_{B_{j}} \frac{\omega_{j, R}(\theta) \, d\theta }{(1+\abs{\theta- R g_i z})^{(p-1)\frac{N-\alpha}{2-p}}\abs{\theta -\zeta}^{N-\alpha}}. 
\end{split}
\]
In addition, let us observe  that for every  $j \ne i$, and for every $\xi \in \R^{N}$
 it holds
\[
\abs{\xi-Rg_j z} < \rho R \quad \Rightarrow\quad \abs{\xi-Rg_i z} \geq 
\abs{Rg_j z-Rg_i z}-\abs{\xi-Rg_j z}>
\rho R.
\]
Then,
recalling \eqref{decayconv}, one gets
\begin{align*}
 \iint_{\Rcal_{jj}}  \frac{\omega^p_{i, R}(\zeta) \;
\omega^{p-1}_{i, R}(\theta)\omega_{j, R}(\theta) }{\abs{\theta-\zeta}^{N-\alpha}} \, d\zeta d\theta & \le 
\frac{C}{ R^{(2p -1)\frac{N-\alpha}{2-p}}} \int_{B_{j}}  
(I_{\alpha}\ast{\omega_{j, R}})(\zeta)d\zeta
\\
&\le  \frac{C}{ R^{(2p -1)\frac{N-\alpha}{2-p}}}     \int_{B_{j}}\frac{d \zeta}{
 (1+\abs{\zeta- R g_j z})^{N-\alpha}} . 
\end{align*}
Computing the  last integral by using polar coordinates yields (denoting with $C$
possibly different constants)
\begin{align*}
 \iint_{\Rcal_{jj}}  \frac{\omega^p_{i, R}(\zeta) \;
\omega^{p-1}_{i, R}(\theta)\omega_{j, R}(\theta) }{\abs{\theta-\zeta}^{N-\alpha}} \, d\zeta d\theta 
& \le 
C R^{(-2p +1)\frac{N-\alpha}{2-p}}R^\alpha  = o(\eps_R),
\end{align*}
where the last equality follows from  direct computations, taking into account 
Lemma \ref{est epsilon} and \eqref{range}.
\end{proof}
We are now in the position to deal with the nonlinearity term.
\begin{proposition}\label{prop:somma}
Let  $ \chi^p_{R, z}$ be defined in \eqref{defsigma}, 
 $\varepsilon_{R}$  in \eqref{defeps} and $\eps_{kk}^{ki}$  in \eqref{defeps restr}.
It results
\[ \int_{\R^N} \left( I_{\alpha} \ast \chi^p_{R, z}\right) \chi^p_{R, z} \ge  \sum_{k=1}^{\ell(G)}  \int_{\mathbb{R}^N} (I_{\alpha} \ast  \omega_{k, R}^p)  \omega_{k, R}^p 
+ p \eps_R + p \sum_{k=1}^{\ell(G)}\sum_{i \ne k} \eps^{ki}_{kk} + o(\varepsilon_R).  \]
\end{proposition}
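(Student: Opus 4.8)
The plan is to bound the double integral from below by applying a convexity inequality region by region and retaining only the three families of terms that reproduce the right‑hand side. Since \eqref{pless2} forces $p>\frac{N+\alpha}N>1$, the map $t\mapsto t^{p}$ is convex, so for nonnegative reals $a_{1},\dots,a_{\ell(G)}$ and every fixed index $k$ one has the two pointwise inequalities
\[
\Big(\sum_{i}a_{i}\Big)^{p}\ge a_{k}^{p}+p\,a_{k}^{p-1}\sum_{i\ne k}a_{i},
\qquad
\Big(\sum_{i}a_{i}\Big)^{p}\ge\sum_{i}a_{i}^{p}.
\]
I would set $B_{k}=B_{\rho R}(Rg_{k}z)$, $k=1,\dots,\ell(G)$, which are pairwise disjoint for $\rho<\min_{i\ne j}d_{ij}/2$, and write the nonlinear term as $\int_{\R^{N}}(I_{\alpha}\ast\chi_{R,z}^{p})(\theta)\,\chi_{R,z}^{p}(\theta)\,d\theta$. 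Restricting the outer variable $\theta$ to $\bigcup_{k}B_{k}$ (the integrand being nonnegative), applying the first inequality to $\chi_{R,z}^{p}(\theta)$ on $B_{k}$ with index $k$, and bounding $I_{\alpha}\ast\chi_{R,z}^{p}\ge\sum_{m}I_{\alpha}\ast\omega_{m,R}^{p}$ (positivity of the Riesz kernel together with the second inequality), I obtain
\[
\int_{\R^{N}}(I_{\alpha}\ast\chi_{R,z}^{p})\chi_{R,z}^{p}\ge
\sum_{k=1}^{\ell(G)}\int_{B_{k}}\Big(\sum_{m}I_{\alpha}\ast\omega_{m,R}^{p}\Big)\Big(\omega_{k,R}^{p}+p\,\omega_{k,R}^{p-1}\sum_{i\ne k}\omega_{i,R}\Big).
\]

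Next I would expand the product and discard all remaining nonnegative contributions, keeping three families. The diagonal leading term ($m=k$, first factor) gives $\sum_{k}\int_{B_{k}}(I_{\alpha}\ast\omega_{k,R}^{p})\,\omega_{k,R}^{p}=\sum_{k}\int_{\R^{N}}(I_{\alpha}\ast\omega_{k,R}^{p})\,\omega_{k,R}^{p}-\ell(G)\int_{\{|y|>\rho R\}}(I_{\alpha}\ast\omega^{p})\,\omega^{p}$, i.e.\ the full self‑energy minus a tail. The diagonal cross term ($m=k$, second factor), after restricting the inner convolution to $B_{k}$ and recalling \eqref{defeps restr}--\eqref{eq:defRkl}, is bounded below by $p\sum_{k}\sum_{i\ne k}\eps_{kk}^{ki}$. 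The off‑diagonal leading term ($m\ne k$, first factor) gives the nonnegative ``charge--charge'' interaction $\mathcal S_{R}:=\sum_{k}\sum_{m\ne k}\int_{B_{k}}(I_{\alpha}\ast\omega_{m,R}^{p})\,\omega_{k,R}^{p}$. Thus the three families reproduce exactly the self‑energy, the term $p\sum_{k}\sum_{i\ne k}\eps_{kk}^{ki}$, and a surplus $\mathcal S_{R}$.

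It then remains to compare orders against $\eps_{R}$. For the tail, \eqref{decay} and \eqref{decayconv} give $(I_{\alpha}\ast\omega^{p})(y)\,\omega^{p}(y)\le C\,|y|^{-(N-\alpha)-p\frac{N-\alpha}{2-p}}$, so the tail is $O\big(R^{\alpha-p\frac{N-\alpha}{2-p}}\big)$, which is $o(\eps_{R})$ because the range $p>\frac{N+\alpha}N$ is exactly equivalent to $\alpha<(p-1)\frac{N-\alpha}{2-p}$ (using Lemma~\ref{est epsilon}). For the surplus, \eqref{decayconv} yields $I_{\alpha}\ast\omega_{m,R}^{p}(\theta)\sim R^{-(N-\alpha)}$ uniformly on $B_{k}$ for $m\ne k$, while $\int_{B_{k}}\omega_{k,R}^{p}\to\|\omega\|_{p}^{p}$, whence $\mathcal S_{R}$ is of order $R^{-(N-\alpha)}$. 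Since $2-p<1$ gives $\frac{N-\alpha}{2-p}>N-\alpha$, Lemma~\ref{est epsilon} shows $\eps_{R}\sim R^{-\frac{N-\alpha}{2-p}}=o(\mathcal S_{R})$, so $\mathcal S_{R}\ge p\,\eps_{R}$ for $R$ large; collecting the three families gives the stated bound. I expect this last order comparison to be the main obstacle: the crux is recognizing that in the polynomially decaying regime the cross self‑interaction $\mathcal S_{R}$ decays like $R^{-(N-\alpha)}$, strictly slower than the local‑type scale $\eps_{R}\sim R^{-(N-\alpha)/(2-p)}$, so it absorbs $p\,\eps_{R}$ with room to spare; verifying the tail estimate and the order of $\mathcal S_R$ is precisely where the nonlocal features (and the restriction on $p$) are used.
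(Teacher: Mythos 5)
Your proposal is correct, but it takes a genuinely different route from the paper's proof. The paper splits the double integral over all product regions $\Rcal_{kl}$, applies Bernoulli's inequality on the diagonal blocks (harvesting $2p\sum_{i\ne k}\eps_{kk}^{ki}$) and on the off-diagonal blocks (harvesting $\tfrac p2(\eps^{kl}_{lk}+\eps^{kl}_{kl}+\eps^{lk}_{lk}+\eps^{lk}_{kl})$), and then crucially invokes Lemma \ref{stime epsilon precise} --- the fact that $\eps_R$ concentrates on $\bigcup_{i\ne j}(\Rcal_{ij}\cup\Rcal_{ji}\cup\Rcal_{ii})$ up to $o(\eps_R)$ --- to reassemble those localized pieces into exactly $p\eps_R+p\sum_k\sum_{i\ne k}\eps_{kk}^{ki}$. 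You instead restrict only the outer variable, bound the convolution factor by superadditivity, and retain the ``charge--charge'' surplus $\mathcal S_R=\sum_k\sum_{m\ne k}\int_{B_k}(I_\alpha\ast\omega_{m,R}^p)\,\omega_{k,R}^p$; the key observation that $\mathcal S_R\gtrsim R^{-(N-\alpha)}$ while $\eps_R\sim R^{-\frac{N-\alpha}{2-p}}=o\bigl(R^{-(N-\alpha)}\bigr)$ (valid precisely because $1<p<2$ makes the decay of $\omega$ polynomial, so $\tfrac{N-\alpha}{2-p}>N-\alpha$) lets you absorb $p\eps_R$ into $\mathcal S_R$ outright, bypassing Lemma \ref{stime epsilon precise} entirely. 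Your order computations are sound: the tail $O\bigl(R^{\alpha-p\frac{N-\alpha}{2-p}}\bigr)$ is $o(\eps_R)$ exactly when $p>\tfrac{N+\alpha}{N}$, and the term $p\sum_k\sum_{i\ne k}\eps_{kk}^{ki}$ is recovered by restricting the inner convolution to $B_k$ (up to the multiplicative constant $A_\alpha$, which the paper itself drops when passing from \eqref{defeps} to \eqref{defeps restr}, so this is consistent with its conventions). As for what each approach buys: yours is shorter and more elementary, and in fact proves the stronger bound in which $p\eps_R$ can be replaced by $c\,R^{-(N-\alpha)}$, which would even sharpen the level estimate of Proposition \ref{estimatesIV}; the paper's bookkeeping, on the other hand, identifies exactly where the interaction energy $\eps_R$ lives and produces an estimate whose structure parallels the one used for $p\ge2$ (Proposition \ref{prop:somma2}), without relying on the strict domination of the cross Coulomb interaction, which is the special feature of the slowly decaying regime that your argument exploits.
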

\begin{proof}
Recalling \eqref{eq:defRkl}, let us set
\[ \Omega=(\R^N\times \R^N )\setminus  \Big( \bigcup _{k, l=1}^{\ell(G)} {\Rcal}_{kl} \Big),   \]
and observe that from our choice of $\rho$ it follows that the sets ${\Rcal}_{kl}$ are disjoint, so that
\begin{equation}\label{sum}  
\begin{split}
\int_{\mathbb{R}^N} (I_{\alpha} \ast \chi_{R, z} ^p)\chi_{R, z} ^p 
=& 
\sum_{k, l=1}^{\ell(G)}\iint_{{\Rcal}_{kl}} \frac{\chi^p_{R, z}(\zeta) \chi^p_{R, z}(\theta ) }{\abs{\theta-\zeta}^{N-\alpha}}  d\zeta d\theta 
+ \iint_{ \Omega}
\frac{\chi^p_{R, z}(\zeta) \chi^p_{R, z}(\theta ) }{\abs{\theta-\zeta}^{N-\alpha}} d\zeta d\theta 
\\
\geq &
\sum_{k, l=1}^{\ell(G)}\iint_{{\Rcal}_{kl}} \frac{\chi^p_{R, z}(\zeta) 
\chi^p_{R, z}(\theta ) }{\abs{\theta-\zeta}^{N-\alpha}} \, d\zeta d\theta 
\\
=&
\sum_{k=1}^{\ell(G)}\iint_{\Rcal_{kk}} \frac{\chi^p_{R, z}(\zeta) 
\chi^p_{R, z}(\theta ) }{\abs{\theta-\zeta}^{N-\alpha}} \, d\zeta d\theta 
\\&
+
\sum_{k=1}^{\ell(G)}\sum_{l\neq k}^{\ell(G)}\iint_{{\Rcal}_{kl}} \frac{\chi^p_{R, z}(\zeta) 
\chi^p_{R, z}(\theta ) }{\abs{\theta-\zeta}^{N-\alpha}} \, d\zeta d\theta 
\end{split}
\end{equation}
where the  inequality follows from the positivity of the function $\omega$.
Let us  consider the first  integral term on the right hand side. Taking into account \eqref{defsigma} and applying 
Bernoulli's  inequality 
\[
\chi_{R, z}^p =\left(\sum_{i=1}^{\ell(G)} \omega_{i,R} \right)^p \ge \omega_{k,R}^p+p\sum_{i\ne k} \omega_{k,R}^{p-1}\omega_{i,R},
\]
we get 
\begin{equation} \label{C1}
\begin{split}
\iint_{\Rcal_{kk}}
 \frac{\chi^p_{R, z}(\zeta) \chi^p_{R, z}(\theta) }{\abs{\theta-\zeta}^{N-\alpha}}&\, d\zeta d\theta 
 \geq
 \iint_{\Rcal_{kk}} \frac{\omega^p_{k, R}(\zeta) \;
\omega^p_{k, R}(\theta)
}{\abs{\theta-\zeta}^{N-\alpha}} \,d\zeta d\theta
\\  
+p&\sum_{i \ne k}  \iint_{\Rcal_{kk}}
\frac{\omega^p_{k, R}(\zeta) \;
\omega^{p-1}_{k, R}(\theta)\omega_{i, R}(\theta)
}{\abs{\theta-\zeta}^{N-\alpha}} \, d\zeta d\theta
\\ 
+p &\sum_{i \ne k} \iint_{\Rcal_{kk}}
\frac{\omega^{p}_{k, R}(\theta) \; \omega^{p-1}_{k, R}(\zeta)
\omega_{i, R}(\zeta)
}{\abs{\theta-\zeta}^{N-\alpha}} \, d\zeta d\theta
\\ 
+p^2&\sum_{i \ne k}   \iint_{\Rcal_{kk}}
\frac{\omega^{p-1}_{k, R}(\zeta) \omega_{i, R}(\zeta)\;
\omega^{p-1}_{k, R}(\theta)\omega_{i, R}(\theta)
}{\abs{\theta-\zeta}^{N-\alpha}} \, d\zeta d\theta.
\end{split}
\end{equation}
Note that the  last term can be neglected, as it is positive, and  that
the following equality holds (see \eqref{defeps restr})
\[ 
 \iint_{\Rcal_{kk}}
\frac{\omega^{p}_{k, R}(\theta) \; \omega^{p-1}_{k, R}(\zeta)
\omega_{i, R}(\zeta)}{\abs{\theta-\zeta}^{N-\alpha}} \, d\zeta d\theta
= 
\iint_{\Rcal_{kk}}
\frac{\omega^{p}_{k, R}(\zeta) \;
\omega^{p-1}_{k, R}(\theta)\omega_{i, R}(\theta)
}{\abs{\zeta-\theta}^{N-\alpha}} \, d\zeta d\theta= \varepsilon_{kk}^{ki}. \]
Exploiting these facts into \eqref{C1} one obtains
\begin{equation}\label{C11}
\iint_{\Rcal_{kk}}
 \frac{\chi^p_{R, z}(\zeta) \chi^p_{R, z}(\theta) }{\abs{\theta-\zeta}^{N-\alpha}}\, d\zeta d\theta 
 \geq  \iint_{\Rcal_{kk}} \frac{\omega^p_{k, R}(\zeta) \;
\omega^p_{k, R}(\theta)
}{\abs{\theta-\zeta}^{N-\alpha}} \,d\zeta d\theta+ 
2p \, \sum_{i \ne k}  \varepsilon_{kk}^{ki}.
\end{equation}
On the other hand
\[
\begin{split}
\iint_{\Rcal_{kk}} \frac{\omega^p_{k, R}(\zeta) \omega^p_{k, R}(\theta)}{\abs{\theta-\zeta}^{N-\alpha}} \, d\zeta d\theta =&
 \int_{\mathbb{R}^N} (I_{\alpha} \ast \omega^p_{k, R})\omega^p_{k, R} 
-\iint_{(\Rcal_{kk})^c}
\frac{\omega^p_{k, R}(\zeta) \omega^p_{k, R}(\theta) }{\abs{\theta-\zeta}^{N-\alpha}} \, d\zeta d\theta,
\end{split}
\]
so that,  \eqref{C11}  becomes
 \begin{align} \label{eqC12}
 \iint_{\Rcal_{kk}}
 \frac{\chi^p_{R, z}(\zeta) \chi^p_{R, z}(\theta) }{\abs{\theta-\zeta}^{N-\alpha}}\, d\zeta d\theta 
\geq
2p & \sum_{i \ne k}  \varepsilon^{ki}_{kk}+
\int_{\mathbb{R}^N} (I_{\alpha} \ast \omega^p_{k, R})\omega^p_{k, R}(x) \;dx\\
\nonumber&-\iint_{(\Rcal_{kk})^c}
\frac{\omega^p_{k, R}(\zeta) \omega^p_{k, R}(\theta) }{\abs{\theta-\zeta}^{N-\alpha}} 
\, d\zeta d\theta. 
\end{align}
By using  \eqref{decay}, \eqref{decayconv} and recalling \eqref{eq:E+}, we obtain 
\begin{equation}\label{stima kk}
\begin{split}
\iint_{(\Rcal_{kk})^c} \frac{\omega^p_{k, R}(\zeta) \omega^p_{k, R}(\theta)}{\abs{\theta-\zeta}^{N-\alpha}}d\zeta d\theta
\leq &
 \int_{(B_{k})^{c}} \omega^p_{k, R}(\zeta) d\zeta
\int_{\R^{N}}\frac{ \omega^p_{k, R}(\theta)}{\abs{\theta-\zeta}^{N-\alpha}} d\theta
\\
&+ \int_{(B_{k})^{c}} \omega^p_{k, R}(\theta) d\theta
\int_{\R^{N}}\frac{ \omega^p_{k, R}(\zeta)}{\abs{\theta-\zeta}^{N-\alpha}} d\zeta
\\
\leq&
C\int_{(B_{k})^{c}}  \abs{\theta-R g_k z}^{-2\frac{N-\alpha}{2-p}}d\theta
\\
=&
C \int _{\rho R}^{\infty} r^{-2\frac{N-\alpha}{2-p}}r^{N-1}\,dr
=CR^{N-2\frac{N-\alpha}{2-p}} =o(\varepsilon_R),
\end{split}
\end{equation}
where the last equality follows from Lemma \ref{est epsilon} and
hypothesis \eqref{range}.

Using \eqref{stima kk} and \eqref{eqC12} one deduces the following information 
concerning the contributes on $\Rcal_{kk}$
\[
% \begin{equation} \label{eqC12bis}
 \begin{split}
 \sum_ {k=1}^{\ell(G)} \iint_{\Rcal_{kk}}
 \frac{\chi^p_{R, z}(\zeta) \chi^p_{R, z}(\theta) }{\abs{\theta-\zeta}^{N-\alpha}}\, d\zeta d\theta 
&\geq \sum_{k=1}^{\ell(G)} \int_{\mathbb{R}^N} (I_{\alpha} \ast \omega^p_{k, R})\omega^p_{k, R}
+2p \sum_{k=1}^{\ell(G)} \sum_{i \ne k} \varepsilon^{ki}_{kk}
 + o(\eps_R). 
\end{split}
\]
Exploiting this in  \eqref{sum} we have
\begin{equation}\label{sum2}  
\begin{split}
\int_{\mathbb{R}^N} (I_{\alpha} \ast \chi_{R, z} ^p)\chi_{R, z} ^p 
&\geq \sum_{k=1}^{\ell(G)} \int_{\mathbb{R}^N} (I_{\alpha} \ast \omega^p_{k, R})\omega^p_{k, R}
+
2p \sum_{k=1}^{\ell(G)} \sum_{i \ne k} \varepsilon^{ki}_{kk}
 + o(\eps_R)\\
&+
\sum_{k=1}^{\ell(G)}\sum_{l\neq k}^{\ell(G)}\iint_{{\Rcal}_{kl}} \frac{\chi^p_{R, z}(\zeta) 
\chi^p_{R, z}(\theta ) }{\abs{\theta-\zeta}^{N-\alpha}} \, d\zeta d\theta.
\end{split}
\end{equation}
Let us now study the integral terms on ${\Rcal}_{kl}$ with $k\neq l$.
By applying Bernoulli's inequality with respect to $\omega_{j, R}$ one has 
\[ 
\chi_{R, z}^p \ge \omega_{j,R}^p+p\sum_{i\neq j}^{\ell(G)}\omega_{j,R}^{p-1}\omega_{i,R} \ge \omega_{j,R}^p+p\omega_{j,R}^{p-1}\omega_{i,R}.
 \]
Computing the product and dropping some terms by positivity, one gets 
\begin{equation}\label{Bern1}
\chi_{R, z}^p(\zeta)  \chi_{R, z}^p(\theta) 
\ge 
p \omega^p_{j, R}(\zeta) \omega^{p-1}_{j, R}(\theta)\omega_{i, R}(\theta) 
+ p \omega^p_{j, R}(\theta) \omega^{p-1}_{j, R}(\zeta)
\omega_{i, R}(\zeta). 
\end{equation}
Since it results
\[ 
\iint_{{\Rcal}_{kl}}
 \frac{\chi^p_{R, z}(\zeta) \chi^p_{R, z}(\theta) }{\abs{\theta-\zeta}^{N-\alpha}}\, d\zeta d\theta = \frac 12 \iint_{{\Rcal}_{kl}}
 \frac{\chi^p_{R, z}(\zeta) \chi^p_{R, z}(\theta) }{\abs{\theta-\zeta}^{N-\alpha}}\, d\zeta d\theta  + \frac 12 \iint_{{\Rcal}_{kl}}
 \frac{\chi^p_{R, z}(\zeta) \chi^p_{R, z}(\theta) }{\abs{\theta-\zeta}^{N-\alpha}}\, d\zeta d\theta
 \]
we can apply \eqref{Bern1} with $j=k$ and $i=l$ in the first integral and with $j=l$ and $i=k$ in the second one, to obtain 
\begin{equation}\label{kl} 
\iint_{{\Rcal}_{kl}}
 \frac{\chi^p_{R, z}(\zeta) \chi^p_{R, z}(\theta) }{\abs{\theta-\zeta}^{N-\alpha}}\, d\zeta d\theta 
 \geq \frac p2 (\eps^{kl}_{lk} + \eps^{kl}_{kl} +  \eps^{lk}_{lk} + \eps^{lk}_{kl}). 
 \end{equation}
Then, recalling \eqref{sum2},  and \eqref{kl}, one has
\begin{align*} 
\int_{\mathbb{R}^N} (I_{\alpha} \ast \chi_{R, z} ^p)\chi_{R, z} ^p \; 
\geq 
&  \sum_{k=1}^{\ell(G)} \int_{\mathbb{R}^N} (I_{\alpha} \ast \omega^p_{k, R})\omega^p_{k, R}(x) \;dx 
+ 2p\sum_{k=1}^{\ell(G)} \sum_{i \ne k} \eps_{kk}^{ki}
\\
&+ \frac p2 \sum_{k=1}^{\ell(G)} \sum_{l \ne k} 
\left(\eps^{kl}_{lk} + \eps^{kl}_{kl} +  \eps^{lk}_{lk} + \eps^{lk}_{kl}\right)+ o(\eps_R) 
\\
=& \sum_{k=1}^{\ell(G)} \int_{\mathbb{R}^N} (I_{\alpha} \ast \omega^p_{k, R})\omega^p_{k, R}  
 + 2p\sum_{k=1}^{\ell(G)} \sum_{i \ne k} \eps_{kk}^{ki}
\\
&
+ p \sum_{k=1}^{\ell(G)} \sum_{l \ne k} \left(\eps^{kl}_{kl} + \eps^{lk}_{kl}\right)+ o(\eps_R). 
\end{align*}
So that,  Lemma \ref{stime epsilon precise} implies
\begin{align*}
\nonumber 2p\sum_{k=1}^{\ell(G)} \sum_{i \ne k} \eps_{kk}^{ki}+p \sum_{k=1}^{\ell(G)} \sum_{l \ne k} (\eps_{kl}^{kl}+\eps_{kl}^{lk}) &= p\sum_{k=1}^{\ell(G)} \sum_{i \ne k} \eps_{kk}^{ki} + p\sum_{k=1}^{\ell(G)}  \sum_{l \ne k} (\eps^{kl}_{kl}+\eps^{lk}_{kl}+\eps^{kl}_{kk}) 
\\
&= p\sum_{k=1}^{\ell(G)} \sum_{i \ne k} \eps_{kk}^{ki} + p\sum_{l\neq k=1}^{\ell(G)}   (\eps^{kl}_{kl}+\eps^{kl}_{lk}+\eps^{kl}_{kk}) 
\\
&= p\sum_{k=1}^{\ell(G)} \sum_{i \ne k} \eps_{kk}^{ki} + p \eps_R + o(\eps_R),
\end{align*}
yielding the conclusion.
\end{proof}

%%%%%%%%%%%%%%%%%%%%%%%%%%%%%%%%%%%%%%%
\subsection{Proof of Theorem \ref{mainthm1}}\label{proofs1}

In this subsection we will complete the proof of Theorem \ref{mainthm1}.
Let us start, recalling the useful properties concerning ${\mathcal N}^{G}_{V}$.
\begin{lemma}\label{Nehari homeo}
The following conclusions hold.
\begin{enumerate}
\item
For each  $u \in H^1_G\setminus \{0\}$ there exists a unique $T(u) >0$ such that 
\( T(u) u \in \mathcal{N}_V^G  \) (see \eqref{defnehari}).
Moreover, $T(u)u $ is the unique global maximum of $\mathcal{I}_V(tu)$, $t \in [0, + \infty) $. 
\item
$c_V^G$ defined in \eqref{defnehari} is strictly positive.
\item
The set $ \mathcal{N}_V^G$ is a closed topological manifold of $H^1(\mathbb{R}^N)$ homeomorphic to the unit sphere.   
\end{enumerate}
\end{lemma}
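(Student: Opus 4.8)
The plan is to follow the fibering-map (Nehari) approach of \cite{SzulkinWeth}, which is designed precisely to avoid differentiating the constraint, since for $p<2$ the set $\mathcal{N}_V^G$ need not be of class $C^1$. For fixed $u\in H^1_G\setminus\{0\}$ I set $A(u)=\|u\|_V^2>0$ and $B(u)=\int_{\R^N}(I_\alpha\ast|u|^p)|u|^p$; here $B(u)>0$ because $I_\alpha>0$ and $|u|^p\not\equiv0$, and both $A$ and $B$ are continuous on $H^1_G$ (the latter by the Hardy--Littlewood--Sobolev inequality, which is what makes $\mathcal{I}_V\in C^1$). The fibering map then reads
\[
\varphi_u(t):=\mathcal{I}_V(tu)=\frac{A(u)}{2}\,t^2-\frac{B(u)}{2p}\,t^{2p},\qquad t\ge0.
\]
Since \eqref{pless2} forces $1<p<2$, we have $2p-2>0$, so the equation $\varphi_u'(t)=t\bigl(A(u)-B(u)\,t^{2p-2}\bigr)=0$ has the unique positive root $T(u)=\bigl(A(u)/B(u)\bigr)^{1/(2p-2)}$, at which $\varphi_u$ increases up to, then decreases away from, with $\varphi_u(t)\to-\infty$; hence $T(u)$ is the global maximum on $[0,\infty)$. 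As $\langle\mathcal{I}_V'(tu),tu\rangle=t\,\varphi_u'(t)$, this yields $T(u)u\in\mathcal{N}_V^G$ and establishes conclusion (1).

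For (2), I would use that every $u\in\mathcal{N}_V^G$ satisfies $\|u\|_V^2=B(u)$, while Hardy--Littlewood--Sobolev together with the Sobolev embedding (both applicable in the range \eqref{range}) give $B(u)\le C\|u\|_V^{2p}$. Combining, $\|u\|_V^{2p-2}\ge 1/C$, so there is $\delta>0$ with $\|u\|_V\ge\delta$ on $\mathcal{N}_V^G$; in particular $\mathcal{N}_V^G$ is bounded away from $0$. On $\mathcal{N}_V^G$ one then rewrites
\[
\mathcal{I}_V(u)=\mathcal{I}_V(u)-\frac{1}{2p}\langle\mathcal{I}_V'(u),u\rangle=\Bigl(\frac12-\frac{1}{2p}\Bigr)\|u\|_V^2\ge\Bigl(\frac12-\frac{1}{2p}\Bigr)\delta^2,
\]
and since $p>1$ the prefactor is positive, whence $c_V^G>0$.

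For (3), I would exhibit the homeomorphism explicitly through the radial projection. Let $S=\{u\in H^1_G:\|u\|_V=1\}$ and define $m\colon S\to\mathcal{N}_V^G$ by $m(u)=T(u)u$, with candidate inverse $n(v)=v/\|v\|_V$. On $S$ one has $A\equiv1$, so $T(u)=B(u)^{-1/(2p-2)}$ is continuous (being a continuous, strictly positive function of $u$), hence $m$ is continuous, while $n$ is continuous because $\mathcal{N}_V^G$ is separated from $0$. The identities $n\circ m=\mathrm{id}_S$ and $m\circ n=\mathrm{id}_{\mathcal{N}_V^G}$ follow from the homogeneity of the construction and the uniqueness in (1): for $v\in\mathcal{N}_V^G$, the points $v$ and $m(n(v))$ lie on the same ray and both belong to $\mathcal{N}_V^G$, forcing $T(v/\|v\|_V)=\|v\|_V$. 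Thus $m$ is a homeomorphism of $S$ onto $\mathcal{N}_V^G$; finally $\mathcal{N}_V^G$ is closed because it equals $\{u:A(u)=B(u)\}\cap\{\|u\|_V\ge\delta\}$, an intersection of closed sets.

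The main obstacle is mild: once the sign condition $2p>2$ is in place, the algebra of the fibering map is routine, and the only genuinely delicate point is the continuity of $T$ (equivalently of $m$), which must hold on the whole sphere $S$ rather than merely on compact subsets. This rests on the continuity and strict positivity of $B$ everywhere on $S$, guaranteeing that $T$ stays finite and continuous without invoking any compactness; I expect this, rather than any of the other steps, to be where care is needed.
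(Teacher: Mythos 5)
Your proposal is correct and follows essentially the same route as the paper, which itself sketches the fibering-map argument of Szulkin--Weth: uniqueness of $T(u)$ from strict monotonicity of $t\mapsto \|u\|_V^2 - t^{2p-2}\int(I_\alpha\ast|u|^p)|u|^p$, positivity of $c_V^G$ via Hardy--Littlewood--Sobolev, and the homeomorphism with the unit sphere given by $u\mapsto T(u)u$ with radial projection as inverse. You merely make explicit what the paper leaves as a reference (the formula $T(u)=(A(u)/B(u))^{1/(2p-2)}$, the $\delta$-bound away from the origin, and the closedness of $\mathcal{N}_V^G$), all of which is sound.
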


 Lemma \ref{Nehari homeo} is a straightforward adaptation of Lemma 2.8, Proposition 2.9 and Corollary 2.10 in \cite{SzulkinWeth},
as in  our case $E^{+}=H^{1}_{G}$ and $E^{-}=\emptyset$. Then, we just sketch the argument.
\begin{proof} 
For any $u \in H^1_G\setminus \{0\}$
\[ \frac{\langle \mathcal{I}'_V(ru), ru \rangle  }{r^2}= \norm{u}^2_{V} - r^{2p-2} \int_{\mathbb{R}^N} (I_{\alpha} \ast \abs{u}^p) \abs{u}^p 
\]
which is positive for $r>0$ sufficiently small, it goes to $-\infty$ for $r\to +\infty$
and it is strictly decreasing in $r \in (0, \infty)$. 
Then, there exists a unique  $T=T(u)>0$ such that $T(u)u $ is the unique global maximum of $\mathcal{I}_V(tu)$ and  $T(u) u\in \mathcal{N}_V^G $.

The Hardy-Littlewood-Sobolev inequality  immediately implies  $c_V^G >0$. 

In addition,
the map $\widehat m: H^1_G\setminus \{0\} \to \mathcal{N}_V^G$ defined as $\hat m(u) =T(u)u$ is continuous, and its restriction to the unit sphere is a homeomorphism between $S^1$ and $\mathcal{N}_V^G$ because  $\widehat m(u)$ is the unique global maximum of $\mathcal{I}_V$ restricted to the set $\mathbb{R}^+ u$ and $\mathcal{I}_V$ is coercive on $\mathcal{N}_V^G$ as
\[ 
\mathcal{I}_V(u)=\frac12\left(1-\frac{1}{p}\right) \|u\|^{2}_{V}, \quad\forall u\in \mathcal{N}_V^G. \qedhere
\]
\end{proof}
\begin{remark}
Having  defined $\Psi: S^{1}\mapsto \R$ by
\(\Psi(u)=\mathcal{I}_V(\widehat m(u)),  \)
and  following the same arguments as in Proposition 2.9 and Corollary 2.10 in \cite{SzulkinWeth}, it turns out that 
\[
\inf_{S^{+}}\Psi=\inf_{\mathcal{N}_V^G}\mathcal{I}_{V}=c^G_{V}, \quad \text{ where } S^{+}=\{w \in H^1_G : \norm{w}=1\}.
\]
Moreover, $\Psi$ is $C^1$ and \( \Psi'(u)v=T(u) \mathcal{I}_V'(\widehat m(u))v. \)
From this, we deduce that $u$ is  a critical point of $\Psi$ on $S^{+}$ if
and only if $\widehat m(u)$ is a critical point of $\mathcal{I}_{V}$ on $\mathcal{N}_V^G$.
\end{remark}
%In order to show that the projection onto the Nehari manifold lies in a suitable energy level.

We are now in the position to detect  the suitable action level where it is possible
to recover a compactness property.
\begin{proposition}\label{estimatesIV}
Let $T_{R}:=T(\chi_{R, z}) $ be defined in Lemma \ref{Nehari homeo} and
assume   \eqref{pless2},  \eqref{Vless2}.
Then, the following  inequality holds 
\[ 
\mathcal{I}_V(T_R \chi_{R, z}) \le \ell(G) c_\infty - \frac12 \sum_{i=1}^{\ell(G)} \sum_{k \ne i} \eps_{ii}^{ik}  + o(\eps_R), \quad \text{ as } R\to +\infty. 
\]
\end{proposition}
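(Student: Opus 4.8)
The plan is to exploit the variational characterization of $T_R$ from Lemma \ref{Nehari homeo}(1): since $T_R\chi_{R,z}$ is the unique maximum of $t\mapsto\mathcal{I}_V(t\chi_{R,z})$ on $(0,\infty)$, setting
\[
A:=\|\chi_{R,z}\|_V^2,\qquad B:=\int_{\R^N}(I_\alpha\ast\chi_{R,z}^p)\chi_{R,z}^p,
\]
a direct maximization of $\mathcal{I}_V(t\chi_{R,z})=\frac{t^2}{2}A-\frac{t^{2p}}{2p}B$ yields the closed form
\[
\mathcal{I}_V(T_R\chi_{R,z})=\max_{t>0}\Big(\frac{t^2}{2}A-\frac{t^{2p}}{2p}B\Big)=\frac{p-1}{2p}\,\frac{A^{p/(p-1)}}{B^{1/(p-1)}}.
\]
Since $1<p<2$, this expression is strictly increasing in $A$ and strictly decreasing in $B$, so it is enough to bound $A$ from above and $B$ from below and substitute.

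First I would estimate $A$. Expanding $\chi_{R,z}=\sum_i\omega_{i,R}$, using translation invariance of $\|\cdot\|$ on the diagonal terms, the second expression for $\eps^{ij}_R$ in \eqref{defeps} on the $V_\infty$ cross terms, and Lemma \ref{estimatesV} for the remaining potential contribution, I obtain
\[
A=\ell(G)\|\omega\|^2+\eps_R+\mathcal{A}_V\le \ell(G)\|\omega\|^2+\eps_R+o(\eps_R).
\]
For $B$ I would invoke Proposition \ref{prop:somma}; combined with the Nehari identity $\int(I_\alpha\ast\omega^p)\omega^p=\|\omega\|^2$ (valid as $\omega$ solves \eqref{Choqlimit}) and translation invariance, this gives
\[
B\ge \ell(G)\|\omega\|^2+p\,\eps_R+p\,S+o(\eps_R),\qquad S:=\sum_{i=1}^{\ell(G)}\sum_{k\ne i}\eps^{ik}_{ii}.
\]
I would also record that $S=O(\eps_R)$, since each $\eps^{ik}_{ii}$ is the integral of a nonnegative kernel over $\Rcal_{ii}\subset\R^N\times\R^N$, hence bounded by $\eps^{ik}_R\sim R^{-(N-\alpha)/(2-p)}$ via Lemma \ref{est epsilon}.

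Setting $m:=\ell(G)\|\omega\|^2$ and writing $A=m(1+a)$, $B=m(1+b)$ with $a=(\eps_R+o(\eps_R))/m$ and $b=(p\,\eps_R+p\,S+o(\eps_R))/m$ both infinitesimal, I would Taylor-expand
\[
\mathcal{I}_V(T_R\chi_{R,z})\le \frac{p-1}{2p}m(1+a)^{p/(p-1)}(1+b)^{-1/(p-1)}=\frac{p-1}{2p}m\Big(1+\tfrac{p}{p-1}a-\tfrac{1}{p-1}b\Big)+o(\eps_R),
\]
the quadratic remainders being $O(\eps_R^2)=o(\eps_R)$. The decisive computation is the identification of the constant term $\frac{p-1}{2p}m=\ell(G)c_\infty$ through $c_\infty=\mathcal{I}_\infty(\omega)=\frac{p-1}{2p}\|\omega\|^2$, together with the cancellation of the pure $\eps_R$ contributions: the terms $\tfrac{p}{p-1}a$ and $-\tfrac{1}{p-1}b$ each produce $\pm\frac{p}{(p-1)m}\eps_R$, which annihilate, leaving only $-\tfrac{1}{p-1}\cdot\frac{pS}{m}$; multiplying by $\frac{p-1}{2p}m$ gives exactly $-\frac12 S$, whence the claimed inequality.

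The main obstacle I anticipate is bookkeeping rather than conceptual. One must verify that the monotonicity of the closed-form maximum in $A$ and $B$ genuinely permits substituting the one-sided bounds, and that all error terms — the potential contribution $\mathcal{A}_V$, the tail estimates already folded into Proposition \ref{prop:somma}, and the quadratic Taylor remainders — are uniformly $o(\eps_R)$. The delicate point is the exact cancellation of the first-order $\eps_R$ terms, which hinges precisely on the Nehari normalization $\int(I_\alpha\ast\omega^p)\omega^p=\|\omega\|^2$ forcing the $A$- and $B$-expansions to share the same $\eps_R$ coefficient once the Taylor weights $\tfrac{p}{p-1}$ and $\tfrac{1}{p-1}$ are applied.
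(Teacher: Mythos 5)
Your proposal is correct and follows essentially the same route as the paper's proof: the closed-form expression $\mathcal{I}_V(T_R\chi_{R,z})=\frac{p-1}{2p}A^{p/(p-1)}B^{-1/(p-1)}$, the upper bound on $A=\|\chi_{R,z}\|_V^2$ via Lemma \ref{estimatesV} and \eqref{defeps}, the lower bound on $B$ via Proposition \ref{prop:somma} together with the Nehari identity $\int(I_\alpha\ast\omega^p)\omega^p=\|\omega\|^2$, and the first-order Taylor expansion in which the $\eps_R$ terms cancel, leaving $-\frac12\sum_{i}\sum_{k\ne i}\eps^{ik}_{ii}$. Your explicit remarks on monotonicity in $A$ and $B$ and on $S=O(\eps_R)$ (needed so the quadratic remainders are $o(\eps_R)$) are points the paper leaves implicit, but the argument is the same.
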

\begin{proof}
Let us first notice that,  following Conclusion  (1) of Lemma \ref{Nehari homeo}
it is easy to obtain that $T_{R}:=T(\chi_{R, z}) $ is given by
\begin{equation}\label{eq:T}
T_{R}^{2p-2}
=
\frac{\|\chi_{R, z}\|^{2}_{V}}{\dys\int_{\mathbb{R}^N}   \left(I_{\alpha} \ast \chi^p_{R, 
z} \right)\chi^p_{R, z}}.
\end{equation}
On the other hand, Lemma \ref{estimatesV} and \eqref{defeps}  yield
\[\begin{split}
\|\chi_{R,z}\|^{2}_{V}&\leq 
\sum_{i=1}^{\ell(G)}\|\omega_{i,R}\|^{2}+\sum_{i\neq j}\int_{\R^{N}}\left[\nabla \omega_{i,R}\cdot \nabla \omega_{j,R}+V_{\infty}\omega_{i,R} \omega_{j,R}\right]
+o(\eps_{R})
\\
&=\sum_{i=1}^{\ell(G)}\|\omega_{i,R}\|^{2}+\eps_{R}
+o(\eps_{R}).
\end{split}\]
This, together with  \eqref{eq:T} and Proposition \ref{prop:somma},  implies
\[\begin{split}
\mathcal{I}_{V}(T_R \chi_{R, z})
&=
T_{R}^{2}\left[\frac12\|\chi_{R,z}\|_{V}^{2}-\frac{T_{R}^{2p-2}}{2p}
\int_{\R^{N}}\left(I_{\alpha}\ast\chi^{p}_{R,z}\right)\chi^{p}_{R,z}
\right]
=\left(\frac12-\frac1{2p}\right)\frac{(\|\chi_{R,z}\|_{V}^{2})^{\frac{p}{p-1}}}{\left[\dys\int_{\mathbb{R}^N}   \left(I_{\alpha} \ast \chi^p_{R, 
z} \right)\chi^p_{R, z}\right]^{\frac1{p-1}}}
\\
&\leq
\left(\frac12-\frac1{2p}\right)
\dfrac{\left[\dys \sum_{i=1}^{\ell(G)}\|\omega_{i,R}\|^{2}+\eps_{R}+o(\eps_{R})
\right]^{\frac{p}{p-1}}}{
\left[\dys \sum_{i=1}^{\ell(G)}\|\omega_{i,R}\|^{2}
+p  \varepsilon_R+ p\sum_{i=1}^{\ell(G)} \sum_{k \ne i} \eps_{ii}^{ik}+o(\varepsilon_{R})\right]^{\frac{1}{p-1}}}.
\end{split}
\]
Using the expansion $(a+t)^{\alpha}=a^{\alpha}+\alpha a^{\alpha-1}t+o(t)$  and the notation 
\[
a:= \sum  \|\omega_{i,R}\|^{2}=\ell(G)\|\omega\|^{2},
\] we get
\[\begin{split}
\mathcal{I}_{V}(T_R \chi_{R, z})
&\le \left(\frac12-\frac1{2p}\right)
\left[a+\eps_{R}+o(\eps_{R})\right]^{\frac{p}{p-1}}
\left[a+p  \varepsilon_R+ p \sum_{i=1}^{\ell(G)} \sum_{k \ne i} \eps_{ii}^{ik}+o(\eps_{R})\right]^{-\frac{1}{p-1}}
\\&
=\left(\frac12-\frac1{2p}\right)\left[
a^{\frac{p}{p-1}}+\frac{p}{p-1}a^{\frac1{p-1}}\eps_{R}+o(\eps_{R})\right] 
\\
&
\; \hskip2cm\cdot \left[a^{-\frac{1}{p-1}}-\frac{1}{p-1}a^{-\frac{p}{p-1}}
\Big(p  \varepsilon_R+ p\sum_{i=1}^{\ell(G)} \sum_{k \ne i} \eps_{ii}^{ik}\Big)+o(\eps_{R})\right]
\\
& =
\left(\frac12-\frac1{2p}\right)\left[
a-\frac{p}{p-1}\sum_{i=1}^{\ell(G)} \sum_{k \ne i} \eps_{ii}^{ik} +o(\eps_{R})\right] \\
&=\ell(G) c^G_{\infty}-\frac1{2}\sum_{i=1}^{\ell(G)} \sum_{k \ne i} \eps_{ii}^{ik}+o(\eps_{R}). \qedhere
\end{split} 
 \]
\end{proof}
We are now in the position to prove Theorem \ref{mainthm1}.  
\begin{proof}[Proof of Theorem \ref{mainthm1}]
Let us first prove that
\begin{equation}\label{eq:cGV}
c^{G}_{V}<\ell(G)c^{G}_{\infty}.
\end{equation}
This inequality can be obtained arguing as in estimate \eqref{stima below 
eps}; indeed from \eqref{defeps restr} we infer 
\[
\begin{split}
\eps^{ik}_{ii}&=\iint_{\{|\zeta|\leq \rho R,\,|\theta|\leq \rho R\}}
\frac{\omega^{p}(\zeta)\omega^{p-1}(\theta)}{|\theta-\zeta|^{N-\alpha}}\omega(\theta-R(g_{i}z-g_{k}z))d\zeta d\theta
\\
&
\geq
\iint_{B_{1}(0)\times B_{1}(0)}
\frac{\omega^{p}(\zeta)\omega^{p-1}(\theta)}{|\theta-\zeta|^{N-\alpha}}\omega(\theta-R(g_{i}z-g_{k}z))d\zeta d\theta\\
&\geq
C\inf_{B_{1}(0)\times B_{1}(0)}\frac{\omega^{p}(\zeta)\omega^{p-1}(\theta)}{|\theta-\zeta|^{N-\alpha}}\int_{B_{1}(0)}
\omega(\theta-R(g_{i}z-g_{k}z))  d\theta
\\
&
\geq 
CR^{-\frac{N-\alpha}{2-p}}
\end{split}
\]
where $C>0$ denotes possibly different constants  and the last inequality comes from \eqref{decay}.
This, together with Lemma \ref{est epsilon} and Proposition \ref{estimatesIV} yield \eqref{eq:cGV}.

We can now reach the conclusion arguing as in the  proof of Theorem
1.1 in \cite{SzulkinWeth}: we construct a minimizing Palais-Smale sequence 
for $\mathcal{I}_{V}$, then,  taking into account \eqref{eq:cGV}, we can apply
Proposition 3.1 in  \cite{CingolaniClappSecchi} to deduce that $u_{n}$  is compact.
Therefore, there exists $u \in \mathcal{N}^G_V$ 
such that $\mathcal{I}_V(u)=c_V^G$. As $\abs{u} \in \mathcal{N}^G_V$ too, and
$c_{V}^G=\mathcal{I}_{V}(u)=\mathcal{I}_{V}(|u|)$ we can choose $u$ positive. Hence
by Lemma \ref{Nehari homeo} we have a $G$-invariant positive solution. \end{proof}

%%%%%%%%%%%%%%%%%%%%%%%%%%%%%%%%%%%%%%%%%%%%%
%%%%%%%%%%%%%%%%%%%%%%%%%%%%%%%%%%%%%%%%%%%%%
\section{Case $p \ge 2$}\label{sec:pgeq2}
%%%%%%%%%%%%%%%%%%%%%%%%%%%%%%%%%%%%%%%%%%%%%
%%%%%%%%%%%%%%%%%%%%%%%%%%%%%%%%%%%%%%%%%%%%%

This section is devoted to the proof of Theorem \ref{mainthm2} and Theorem 
\ref{mainthm4}. The theoretical strategy of the proof is  analogous to the previous 
section. 
In addition, in this case, the nonlinearities can be treated as in  \cite{clasal} and 
the main  point is to deal with the potential term.
For this range of exponents, the solutions of the limit problem \eqref{Choqlimit} have an exponential decay, so instead of Lemma \ref{lemma cm} we will apply
a result proved in \cite{AmbrosettiColoradoRuiz} (see Lemma \ref{ACR}) 
when $p>2$ or $p=2$ and $\alpha < N-1$.

While, if $p=2$ and $\alpha \in (N-1, N-1/2]$, the solutions of the limit 
problem \eqref{Choqlimit} have an exponential correction, see 
\eqref{decaypertexp},  and we will need to extend Lemma \ref{ACR} in order to treat these different decays, 
(Lemma \ref{lemma estensione1} and \ref{lemma estensione2}). 
These results will allow us to  prove that also in this situation the integral involving the potential decays faster. 
Then, the proofs of  Theorem \ref{mainthm2} and Theorem 
\ref{mainthm4} will be given in Subsection  \ref{proofs2}. 

%%%%%%%%%%%%%%%%%%%%%%%%%%%%%%%%%%%%%%%%%%
\subsection{Asymptotic Analysis} \label{Extension}
In this Subsection we first  prove two extensions of Lemma \ref{ACR} to functions with an exponential correction in the decays. The proof, which is partly inspired by \cite{AmbrosettiColoradoRuiz}, requires a very careful analysis, and we will need to split it into two different Lemma, proved arguing in different ways depending on the coefficients. 
Thanks to these Lemma we will be able to perform the asymptotic study 
as in subsection \ref{sec:estimates1}.
\begin{lemma}\label{lemma estensione1}
Let $u, v$ be two continuous, positive radial functions such that 
\begin{equation}\label{eq:ipodec}
 u \sim \abs{x}^{a} e^{-b \abs{x}+c \abs{x}^{\gamma} } \qquad
 v \sim \abs{x}^{a'} e^{-b' \abs{x}+c' \abs{x}^{\gamma'} }, \quad\text{as $\abs{x} \to \infty,$}
\end{equation}
where $b, b',c, c'>0$, $a, a' \in \R$, $\gamma \in (0, 1)$, and  $\gamma' \in [0,1)$. 
Then the following estimate holds
\[ 
\int_{\R^N} u_\xi v \sim 
\abs{\xi}^{a} e^{-b\abs{\xi}+ c \abs{\xi}^\gamma} \qquad \text{ if } b< b', \text{ or if } b=b' \text{ and } \gamma > \gamma', 
\] 
where  $u_{\xi}(x)=u(x-\xi)$. 
\end{lemma}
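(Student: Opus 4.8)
The plan is to estimate the convolution-type integral $\int_{\R^N} u_\xi v$ by splitting $\R^N$ into regions according to the distance from the two centres $0$ and $\xi$, exploiting that both functions are radial with controlled exponential decay. Setting $r=\abs{\xi}$ and choosing coordinates so that $\xi = r e_1$, I would split $\R^N$ into the ball $B_{r/2}(\xi)$ near the centre of $u_\xi$, the ball $B_{r/2}(0)$ near the centre of $v$, and the complementary ``middle/outer'' region. On $B_{r/2}(\xi)$ the factor $u_\xi(x)=u(x-\xi)$ is of order $1$ (away from its decay regime) while $v(x)\sim v(\xi)$ up to the slowly varying polynomial and subexponential corrections, so this region should produce the expected main term of order $\abs{\xi}^{a'}e^{-b'\abs{\xi}+c'\abs{\xi}^{\gamma'}}$; symmetrically, $B_{r/2}(0)$ produces a term of order $\abs{\xi}^{a}e^{-b\abs{\xi}+c\abs{\xi}^{\gamma}}$. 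Under the hypothesis $b<b'$, or $b=b'$ with $\gamma>\gamma'$, the second of these dominates the first, which is precisely the claimed asymptotic; the remaining region must then be shown to be negligible.

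The key technical point is the estimate on the ``middle'' region, where neither $u_\xi$ nor $v$ is $O(1)$ and both exponential factors are active. Here I would use the triangle-inequality bound $\abs{x-\xi}+\abs{x}\ge \abs{\xi}$ to control the product of the two leading exponentials, and then integrate the resulting $e^{-b\abs{x-\xi}-b'\abs{x}}$ against the polynomial and the subexponential corrections $e^{c\abs{x-\xi}^\gamma + c'\abs{x}^{\gamma'}}$. The heart of the matter is that since $\gamma,\gamma'<1$, the subexponential corrections are genuinely lower order than the linear exponential terms and cannot upset the leading balance; quantitatively, one needs a lemma of the form $c\,s^\gamma + c'\,t^{\gamma'}\le \eps(s+t) + C_\eps$ for any $\eps>0$, which follows from $\gamma,\gamma'<1$. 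This lets me absorb the corrections into the linear exponential with an arbitrarily small loss in the rate, reducing the middle-region estimate to a purely exponential convolution integral, which can be handled by the same Laplace/saddle-point type computation that underlies the original Lemma \ref{ACR}.

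I would organize the proof so that the upper and lower bounds match. For the upper bound, after the region decomposition I estimate each piece and check that the dominant contribution under the stated relation between $b,b',\gamma,\gamma'$ is the $v$-centred ball (giving the right-hand side), with the other pieces being $o$ of it. For the lower bound, it suffices to restrict the integral to a fixed ball $B_\delta(0)$ around the centre of $v$, on which $v$ is bounded below by a positive constant and $u_\xi(x)=u(x-\xi)$ is asymptotically $\sim \abs{\xi}^a e^{-b\abs{\xi}+c\abs{\xi}^\gamma}$ uniformly for $x\in B_\delta(0)$; integrating gives a lower bound of the correct order, which matches the upper bound up to constants.

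The main obstacle I anticipate is precisely the control of the subexponential correction factors $e^{c\abs{x}^\gamma}$ and $e^{c'\abs{x}^{\gamma'}}$ in the middle region: unlike the purely exponential case treated in \cite{AmbrosettiColoradoRuiz}, these factors grow and must be shown not to interfere with the exponential balance, which forces a careful quantitative comparison of $s^\gamma$ (and $t^{\gamma'}$) against $s$ (and $t$) that is uniform as $\abs{\xi}\to\infty$. The threshold case $b=b'$, $\gamma>\gamma'$ is the delicate one, since there the two linear exponentials are equal and the whole distinction between the two centred balls is made by the subexponential terms; there I would have to track the correction $c\abs{x-\xi}^\gamma$ versus $c'\abs{x}^{\gamma'}$ carefully along the segment joining the two centres to confirm that the $u$-side correction genuinely wins. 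This is exactly the reason the authors anticipate splitting into a second lemma (Lemma \ref{lemma estensione2}) to handle the remaining parameter regimes by a different argument.
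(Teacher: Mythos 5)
Your overall architecture (lower bound from a fixed ball around the centre of $v$, upper bound by a three-region decomposition) matches the paper's in spirit, but the key technical device you propose for the middle region does not work, and it fails precisely where the lemma is nontrivial. The absorption inequality $c\,s^\gamma+c'\,t^{\gamma'}\le \eps(s+t)+C_\eps$ is too lossy here: applied with $s=\abs{x-\xi}$, $t=\abs{x}$, it converts the middle-region integrand into a purely exponential one at the cost of a factor $e^{\eps(\abs{x-\xi}+\abs{x})}\ge e^{\eps\abs{\xi}}$-type loss, so the resulting bound is of order $e^{-(b-\eps)\abs{\xi}}$ times polynomials. But the target asymptotic is $\abs{\xi}^{a}e^{-b\abs{\xi}+c\abs{\xi}^{\gamma}}$, and since $\gamma<1$ one has $e^{\eps\abs{\xi}}\gg e^{c\abs{\xi}^{\gamma}}$ for every fixed $\eps>0$; hence your middle-region estimate is not $O$ (let alone $o$) of the main term, and the upper bound collapses \emph{even in the ``easy'' case} $b<b'$. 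The correct manoeuvre, which is what the paper does in its Step 3 (see \eqref{eq:moduli}), is to first peel off the exact target correction via subadditivity, $c\abs{x-\xi}^{\gamma}\le c\abs{\xi}^{\gamma}+c\abs{x}^{\gamma}$, and only then absorb the remaining $\abs{x}$-dependent corrections $c\abs{x}^{\gamma}+c'\abs{x}^{\gamma'}$ into the \emph{strict} exponential gap $(b'-b)\abs{x}$. This repair is available only when $b<b'$.

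In the threshold case $b=b'$, $\gamma>\gamma'$ there is no gap $(b'-b)\abs{x}$ to absorb into, so no version of the absorption trick can work, and your plan reduces to the sentence ``track the correction carefully along the segment,'' which is exactly where all the difficulty sits. The paper resolves it by a genuine Laplace-type analysis (its Steps 4--6): splitting the middle slab into $\abs{y}>r$ and $\abs{y}<r$, proving the two-sided asymptotic \eqref{estimate2} that reduces everything to one-dimensional integrals of the form $\int_{r_0}^{\xi_0/2} r^{\frac{N-1}{2}+a'}e^{c'r^{\gamma'}+c(\xi_0-r)^{\gamma}}\,dr$, and then showing these are $O(e^{c\xi_0^{\gamma}})$ by locating where the exponent $c'r^{\gamma'}+c(\xi_0-r)^{\gamma}$ is maximized (here $\gamma>\gamma'$ is used). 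Note also that this threshold case is \emph{not} deferred to Lemma \ref{lemma estensione2}, as you suggest: that lemma treats $b=b'$, $\gamma=\gamma'$ (where the answer has a different exponent $\tilde c$ and an extra polynomial factor), while $b=b'$, $\gamma>\gamma'$ must be, and is, proved inside Lemma \ref{lemma estensione1}. A further, more minor, inaccuracy: on a ball of radius $\abs{\xi}/2$ around $\xi$ it is not true that $v(x)\sim v(\xi)$ (the ratio varies by factors of order $e^{b'\abs{\xi}/2}$ across such a ball), so your identification of that ball's contribution as $\abs{\xi}^{a'}e^{-b'\abs{\xi}+c'\abs{\xi}^{\gamma'}}$ needs fixed-radius balls, which in turn enlarges the middle region you must control.
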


\begin{proof}
The proof is quite lengthy, so that it will be divided into steps.
\\
{\bf Step 1.}
We preliminarily give a bound from below.
By the positivity and the continuity of  $v$ one has that $v\geq C>0$ on $B_{1}(0)$, the ball
centred at zero with radius one, this together with  the fact that the  function 
$f(t)=t^a e^{-b t+c t^\gamma}$ is decreasing if $t$ is sufficiently large,
yields
\begin{equation}\label{stima below} 
\begin{split}
\int_{\R^N} u_\xi v 
&\ge \int_{B_{1}(0)} u_\xi v \ge C \int_{B_{1}(0)} \abs{x-\xi}^{a} e^{-b \abs{x-\xi}+c \abs{x-\xi}^{\gamma} }\, dx  
\\ 
&
\ge C (\abs{\xi}+1)^a e^{-b \abs{\xi}-b + c(\abs{\xi}+1)^\gamma} \sim\underline{ C} \abs{\xi}^a e^{-b \abs{\xi} + c\abs{\xi}^\gamma}.
\end{split}
\end{equation}
{\bf Step 2.}
In this step we will show that
\begin{equation}\label{eq:acr1}
\int_{\R^N} u_\xi v \leq \int_{r_0}^{\xi_0-r_0}dr\int_{\R^{N-1}}u_\xi vdy+
 C\left[\xi_0^{a}e^{-b\xi_0+ c \xi_0^{\gamma}}+\xi_0^{a'}e^{-b'\xi_0+ c' \xi_0^{\gamma'}}\right],
\end{equation}
where  
\begin{equation}\label{eq:xizero}
x= (r, y) \in \mathbb{R} \times \mathbb{R}^{N-1},\qquad
\xi= (\xi_0, 0, \dots, 0), \qquad \text{and }\; r_{0}\in (1,\xi_{0}/2).
\end{equation}
Let us observe that $r_{0}$ will be fixed sufficiently large and 
 the notation on $\xi$ can be taken  up to rotations.
In order to prove \eqref{eq:acr1},  we split the integral as follows
\begin{equation}\label{into} 
\int_{\R^N} u_\xi v = \int_{-\infty}^{r_0}dr\int_{\R^{N-1}}u_\xi v dy
+\int_{r_0}^{\xi_0-r_0}dr\int_{\R^{N-1}}u_\xi vdy
+ \int_{\xi_{0}-r_0}^{+\infty}dr\int_{\R^{N-1}} u_\xi vdy  .
\end{equation}
As $u,\,v$ are radial functions, by performing the change of variables
$r'=\xi_{0}-r$ one gets
\[
\int_{-\infty}^{r_0}dr\int_{\R^{N-1}}u_{\xi}v dy+\int_{\xi_{0}-r_0}^{+\infty}dr\int_{\R^{N-1}} u_{\xi}vdy
=\int_{-\infty}^{r_0}dr\int_{\R^{N-1}}(u_{\xi}v+uv_{\xi}) dy.
\]
Now, note that for every $r<r_0$ it results
\[
\abs{x-\xi}=\sqrt{(\xi_{0}-r)^{2}+|y|^{2}}\geq |\xi_{0}-r|> \xi_0-r_0,
\]
then thanks to the monotonicity properties of the function $f(t)=t^a e^{-b t+c t^\gamma}$ already observed, one deduces that, for $\xi_0$ sufficiently  large there exists a positive constant $C$ such that
\[ 
|x-\xi|^{a}e^{-b\abs{x-\xi}+ c \abs{x-\xi}^{\gamma}}\leq C \xi_{0}^a e^{-b \xi_0+ c\xi_0^{\gamma}},
\quad
|x-\xi|^{a'}e^{-b'\abs{x-\xi}+ c' \abs{x-\xi}^{\gamma'}}\leq C \xi_{0}^{a'} e^{-b' \xi_0+ c'\xi_0^{\gamma'}}.
\]
These facts and \eqref{eq:ipodec}  yield (with $C$ possibly different constants)
\begin{equation}\label{into1stima}
\begin{split}
\int_{-\infty}^{r_0}\hskip-5pt dr\hskip-5pt\int_{\R^{N-1}}\hskip-5pt 
\left(u_{\xi}v+uv_{\xi}\right) dy
\leq &
C \xi_0^{a}e^{-b\xi_0+ c \xi_0^{\gamma}}
\int_{-\infty}^{r_0}\hskip-5pt dr\hskip-5pt\int_{\R^{N-1}}  
vdy
\\&
+ C \xi_0^{a'}e^{-b'\xi_0+ c' \xi_0^{\gamma'}}
\int_{-\infty}^{r_0}\hskip-5pt dr\hskip-5pt\int_{\R^{N-1}}  
udy
\\
\leq &  C\left[\xi_0^{a}e^{-b\xi_0+ c \xi_0^{\gamma}}+\xi_0^{a'}e^{-b'\xi_0+ c' \xi_0^{\gamma'}}\right]
\end{split}
\end{equation}
where the last inequality is deduced observing that 
$u,\,v\in L^{1}(\R^{N})$, so that \eqref{eq:acr1} holds.
%In order to estimate the remaining term in \eqref{into} we will argue in different  ways.
\\
{\bf Step 3.}
In this step we are going to show that
\begin{equation}\label{into2 stima} 
\int_{r_0}^{\xi_0-r_0}dr \int_{\R^{N-1}} u_\xi v \, dy \le C\,\xi_0^a e^{-b \xi_0 + c \xi_0^\gamma}, \qquad \text{if $b<b'$.}
\end{equation}
First of all, we can find $\tilde b,\,C$   positive constants  such that
\begin{equation}\label{eq:moduli}
e^{-b' \abs{x} - b \abs{\xi-x} + c' \abs{x}^{\gamma'} + c \abs{\xi-x}^\gamma} \leq 
e^{-b\xi_0-(b'-b) \abs{x} + c' \abs{x}^{\gamma'} + c \abs{\xi-x}^\gamma} 
\leq
Ce^{-b\xi_0-\tilde b \abs{x}+ c \xi_0^\gamma} .
\end{equation}
In addition,  taking into account \eqref{eq:xizero}  it results
\[
\abs{x}^{a'} \abs{\xi-x}^a\leq r^{a'} \abs{\xi_0-r}^a,\quad \text{when $a',\,a<0$ },
\]
while, if both $a$ and $a'$ are positive, by direct computations, we can find a positive constant $C$
such that 
\[
\abs{x}^{a'} \abs{\xi-x}^a\leq Cr^{a'} \abs{\xi_0-r}^a (1+\abs{y})^{a'} (1+\abs{y})^{a}.
\]
Then, noting that $|x|\geq \frac{r+|y|}2$, and using \eqref{eq:moduli}, we obtain
(denoting with $C$ possibly different constants)
\[
\begin{split} 
\int_{r_0}^{\xi_0-r_0}dr \int_{\R^{N-1}} u_\xi v \, dy 
&
\le C e^{-b\xi_0+ c \xi_0^\gamma}
\int_{r_0}^{\xi_0-r_0}dr \int_{\R^{N-1}} r^{a'} \abs{\xi_0-r}^a e^{-\tilde b \frac r2 
} e^{-\tilde b\frac{\abs{y}}{2}}\tilde{h}(y) \, dy 
\\
&\le C\,e^{-b \xi_0 + c \xi_0^\gamma} \int_{r_0}^{\xi_0-r_0} r^{a'} \abs{\xi_0-r}^a e^{-\tilde b \frac r2} \,dr, 
\end{split}
\]
where we have used that $e^{-\tilde b\frac{\abs{y}}{2}}\tilde{h}(y)\in L^{1}(\R^{N-1})$. 
When $a$ and $a'$ have opposite sign an analogous argument leads to the same conclusion.
This last integral can be now estimated exactly as in \cite{AmbrosettiColoradoRuiz} (Lemma 3.7 pp. 108-109), and we get \eqref{into2 stima}.
\\
{\bf Step 4. }
In  this step we will consider the case $b=b'$ and we will  show that
\begin{equation}\label{into2 stimab=} 
\begin{split}
\int_{r_0}^{\xi_0-r_0}dr \int_{\R^{N-1}} u_\xi v \, dy 
\le &
 \int_{r_0}^{\frac{\xi_0}{2}}dr\int_{\{\abs{y}<r\}}(u_\xi v + u v_\xi)  dy
\\&
+
C e^{-b\xi_{0}}\left[\xi_0^{a'} e^{c' \xi_0^{\gamma'}}+\xi_0^a e^{c \xi_0^{\gamma}} \right]. 
\end{split}
\end{equation}
Performing  the change of variables $r'=r-\xi_{0}$ and taking into account the symmetry  properties of $u$ and $v$, one gets
\[
\int_{\xi_{0}/2}^{\xi_{0}-r_{0}}dr\int_{\R^{N-1}}u_{\xi}vdy=
\int_{r_{0}}^{\xi_{0}/2}dr\int_{\R^{N-1}}uv_{\xi}dy
\]
so that
\begin{equation}\label{into2}
\begin{split}
\int_{r_0}^{\xi_0-r_0}dr\int_{\R^{N-1}} u_\xi v dy 
= &   
\int_{r_0}^{\frac{\xi_0}{2}}dr\int_{\abs{y}>r}(u_\xi v + u v_\xi)  dy 
\\&
+ \int_{r_0}^{\frac{\xi_0}{2}}dr\int_{\abs{y}<r}(u_\xi v + u v_\xi)  dy.
\end{split}
\end{equation}
Then, in order to show \eqref{into2 stimab=} 
we have to study  the first integral on the right hand side.

Notice that $\abs{x} \le r + \abs{y}$, and $\abs{\xi-x} \le \xi_0 + \abs{y}$, 
so that as $\gamma,\,\gamma'\in [0,1)$, $\abs{x}^{\gamma'} \le r ^{\gamma'}+ \abs{y}^{\gamma'} \le 2 \abs{y}^{\gamma'}$, and $\abs{\xi-x}^{\gamma} \le \xi_0^{\gamma} + \abs{y}^{\gamma}$.
Moreover, as $|y|>r$ in the integral under study, it holds that
 $\abs{x}\geq \sqrt2 r$ and $(1-1/\sqrt{2})|x|\geq (1-1/\sqrt{2})|y|$,
summing up, one gets
$\abs{x} \ge r +(1-1/\sqrt{2})  \abs{y}$.  
Thus
\begin{equation}\label{eq:lambda} 
\abs{x} +  \abs{\xi-x} > \xi_0 +  \lambda \abs{y},\qquad  \lambda =1-1/\sqrt{2},
 \end{equation}
yielding
\begin{equation}\label{eq:exp}
e^{-b(|x-\xi|+|x|)+c|x-\xi|^{\gamma}+c'|x|^{\gamma'}}\leq 
e^{-b \xi_0+c \xi_0^{\gamma}} e^{-\lambda b  \abs{y}+c\abs{y}^{\gamma}+ 2c' \abs{y}^{\gamma'}}.
\end{equation}
Moreover, $\abs{x} \ge r$, and $\abs{\xi-x} \ge \frac{\xi_0}{2}$,  so that
\begin{align*}
|x-\xi|^{a}|x|^{a'}&\leq C|r|^{a'}\xi_{0}^{a} \le C \xi_0^a,\hskip3.25cm\text{if  $a,\,a'<0$,}
\\
|x-\xi|^{a}|x|^{a'}&\leq C|y|^{a'}(\xi_{0}^{a}+|y|^{a})\leq C \xi_{0}^{a}|y|^{a+a'},\qquad \text{if  $a,\,a'>0$},
\end{align*}
where  the last inequality follows from  the fact that $|y|\geq r$.
In the case in which $a$ and $a'$ have opposite sign we will obtain 
a combination of the previous estimates.
This, together with  \eqref{eq:ipodec} and \eqref{eq:exp}, implies
\[
\begin{split}
\int_{r_0}^{\frac{\xi_0}{2}} \int_{\abs{y}>r}  u_{\xi} v &
\leq 
C \xi_0^a e^{-b \xi_0+c \xi_0^{\gamma}} \int_{r_0}^{\frac{\xi_0}{2}} \int_{\abs{y}>r} 
|y|^{|a'|+|a|} e^{-\lambda b  \abs{y}+c\abs{y}^{\gamma}+ 2c' \abs{y}^{\gamma'}}
\\
&\le 
C \xi_0^a e^{-b \xi_0+c \xi_0^{\gamma}} 
\int_{r_0}^{\infty}\int_r^{\infty} \rho^{|a|+|a'|+N-2} e^{-\lambda b  \rho+ c \rho^{\gamma} +2c' \rho^{\gamma'}} \, dr \, d \rho
\\
&=
C \xi_0^a e^{-b \xi_0+c \xi_0^{\gamma}} 
\int_{r_{0}}^{+\infty}d\rho \int_{r_{0}}^{\rho}\rho^{|a|+|a'|}\rho^{N-2} e^{-\lambda b  \rho+ c \rho^{\gamma} +2c' \rho^{\gamma'}} \,dr
\\
&\leq C \xi_0^a e^{-b \xi_0+c \xi_0^{\gamma}} 
\int_{r_0}^{\infty}\rho^{N-1+|a|+|a'|}  e^{-\lambda b \rho + c \rho^{\gamma} +2c' \rho^{\gamma'}} \, d \rho 
\\
&\leq C \xi_0^a e^{-b \xi_0+c \xi_0^{\gamma}} ,
\end{split} \]
where we recall that $\lambda >0$ is introduced in \eqref{eq:lambda}.
Arguing analogously, one obtains \eqref{into2 stimab=}.
\\
{\bf Step 5.}
In this step we will prove 
\begin{equation}\label{estimate2}  
\begin{split}
\int_{r_0}^{\frac{\xi_0}{2}} dr\int_{\abs{y}<r}  (u_{\xi} v+u v_\xi ) dy 
\sim &
\xi_0^{a}e^{-b \xi_0}    \int_{r_0}^{\frac{\xi_0}{2}} r^{\frac{N-1}{2} + a'} e^{c'r^{\gamma'} + c(\xi_0 - r)^{\gamma}}dr
\\
&+\xi_0^{a'}e^{-b \xi_0}  \int_{r_0}^{\frac{\xi_0}{2}} r^{\frac{N-1}{2} + a} e^{cr^{\gamma} + c'(\xi_0 - r)^{\gamma'}}dr. 
\end{split}\end{equation}
Recalling \eqref{eq:xizero} and taking into consideration that $|y|\leq r$, one has
\begin{equation}\label{dismod}
 r < \abs{x} = \sqrt{r^2 +\abs{y}^2} < \sqrt 2 r,\qquad 
 \frac{ \xi_0}{2} < \abs{\xi - x} <\abs{\xi_0 -r} + r = \xi_0. 
 \end{equation}
Moreover, let us take $h=|y|^{2}$ and consider, for every $s, t \in (0, 2)$, and $d, d' > 0$, the function
\[
f(h):=d'\abs{x}^{s}+ d\abs{\xi-x}^{t}= d'(r^{2}+h)^{\frac{s}2} + d\left((\xi_{0}-r)^{2}+h\right)^{\frac{t}2}.
\]
Then, \eqref{dismod} yields for $h\in [0,r^{2}]$ and $\xi_{0}>2r$
\[ 
 d' \frac{s}2 2^{\frac{s}2 -1} r^{s-2} \le f'(h) \le   \left( d \frac t 2 r^{t-2} + d' \frac{s}2 r^{s-2} \right), \]
so that
\[ d' \frac{s}2 2^{\frac{s}2 -1} r^{s-2} h  + f(0) \le f(h) \le f(0) + \left( d \frac t 2 r^{t-2} + d' \frac{s}2 r^{s-2} \right) h, \qquad \forall\, h\in [0,r^{2}].
\]
As a consequence, the following inequality holds for every $s, t \in (0, 2)$ 
\[
\begin{split}
d' r^s + d (\xi_0-r)^{t} +  d' \frac{s}2 2^{\frac{s}2 -1}  r^{s-2} |y|^{2}
& < d'\abs{x}^{s}+ d\abs{\xi-x}^{t}
 \\
&< d' r^s + d (\xi_0-r)^{t} + \left( d \frac t 2 r^{t-2} + d' \frac{s}2 r^{s-2} \right) |y|^{2}.
\end{split}
\]
Using these information (both for $s=\gamma'$, $t=\gamma$, $d=c$, $d'=c'$, and for $s=t=1$, $d=d'=b$), together with \eqref{eq:ipodec} and \eqref{dismod}, we obtain
\[ \begin{split}
\int_{\abs{y}<r}  u_{\xi} vdy \sim 
 \xi_0^{a} e^{-b \xi_0}  r^{a'} e^{c' r^{\gamma'} + c (\xi_0 -r)^{\gamma} } \int_{\abs{y}
 <r}  e^{-\hat b_1 \frac{\abs{y}^2}{r} + \hat b_2 \frac{\abs{y}^2}{r^{2-\gamma}} + \hat b_3 \frac{\abs{y}^2}{r^{2-\gamma'}} }dy, 
\end{split}\]
where $\hat b_1, \hat b_2, \hat b_3$ are positive constants which depend on the parameters and on whether we are considering estimates from above or below. 
Notice that, for every $r>r_0$ and for  $r_{0}$  fixed sufficiently large, 
(depending on the parameters but not on $\xi_{0}$), 
\[ 
\frac {\hat b_1}2 \frac{\abs{y}^2}{r} 
\le \frac{\abs{y}^2}{r}\left(\hat b_1 -\frac{\hat b_2}{r_0^{1-\gamma }}-\frac{\hat b_3}{r_0^{1-\gamma' }}\right) 
\le \hat b_1 \frac{\abs{y}^2}{r}. \]
 Therefore, choosing   $\hat b=\hat b_1$ in the  estimate from below  and $\hat b=\hat b_1 /2$ in the one from above, it follows
\[
\begin{split}
\int_{r_0}^{\frac{\xi_0}{2}} dr\int_{\abs{y}<r}  u_{\xi} v dy 
&\sim 
\xi_0^a e^{-b \xi_0}  \int_{r_0}^{\frac{\xi_0}{2}} r^{a'} e^{c'r^{\gamma'} + c(\xi_0 - r)^{\gamma}}  dr \int_{\abs{y}<r}  e^{-\hat b \frac{|y|^2}{r} }dy
\\
&=\xi_0^a e^{-b \xi_0}  \int_{r_0}^{\frac{\xi_0}{2}} r^{a'} r^{\frac{N-1}{2}} e^{c'r^{\gamma'} + c(\xi_0 - r)^{\gamma}}  dr \int_{\abs{y'}<\sqrt{r}}  e^{-\hat b| y'|^{2} }dy'.
\end{split}\]
The last integral is bounded from above by the integral in the whole
$\R^{N-1}$ and, since $r>1$, it is bounded from below by the integral on 
$B_{1}(0)$, both finite;
so that it results
\begin{equation}\label{eq:stima per below}
\int_{r_0}^{\frac{\xi_0}{2}}dr \int_{\abs{y}<r}  u_{\xi} v \sim 
\xi_0^{a} e^{-b \xi_0}  \int_{r_0}^{\frac{\xi_0}{2}} r^{\frac{N-1}{2} + a'} e^{c'r^{\gamma'} + c(\xi_0 - r)^{\gamma}}dr. 
\end{equation}
By similar computations, exchanging the role of the coefficients, one has 
\eqref{estimate2}.
\\
{\bf Step 6.}
In this step we will conclude the proof.

Let us start dealing with the first integral on the right hand side of \eqref{estimate2}, and
notice that on the interval $[r_0, \xi_0/2]$ one has $r^{\gamma'} < (\xi_0/2)^{\gamma'}$ and $(\xi_0 - r)^\gamma \le (\xi_0-r_0)^\gamma$. Hence for any $\gamma > \gamma'' >\gamma' \ge 0$ one has 
\[
\begin{split} 
\int_{r_0}^{\frac{\xi_0}{2}} r^{\frac{N-1}{2} + a'} e^{c'r^{\gamma'} + c(\xi_0 - r)^{\gamma}}dr 
&
\le e^{c'(\xi_0/2)^{\gamma'} + c(\xi_0 - r_0)^\gamma + (\xi_0/2)^{\gamma''}} \int_{r_0}^{\frac{\xi_0}{2}} r^{\frac{N-1}{2} + a'} e^{- r^{\gamma''}} \, dr 
\\&
\le C e^{c'(\xi_0/2)^{\gamma'} +  (\xi_0/2)^{\gamma''} + c(\xi_0 - r_0)^\gamma}. \end{split}
\]
As $\gamma > \gamma'' >\gamma' \ge 0$, one gets
 \( e^{c'(\xi_0/2)^{\gamma'} +  (\xi_0/2)^{\gamma''} + c(\xi_0 - r_0)^\gamma} \sim e^{c \xi_0^\gamma} \)
if $\xi_0$ is sufficiently large, so that
\begin{equation}\label{estimate4} \int_{r_0}^{\frac{\xi_0}{2}} r^{\frac{N-1}{2} + a'} e^{c'r^{\gamma'} + c(\xi_0 - r)^{\gamma}}dr \le C \, e^{c \xi_0^\gamma}. \end{equation}
When arguing on the second integral in \eqref{estimate2}, we 
take into account that the role of $\gamma $ and $\gamma'$ are
exchanged, and as $\gamma>\gamma'$  we obtain
\begin{equation}\label{estimate4 bis} \int_{r_0}^{\frac{\xi_0}{2}} r^{\frac{N-1}{2} + a} e^{cr^{\gamma} + c'(\xi_0 - r)^{\gamma'}}dr \le C \, e^{c(\xi_0/2)^\gamma},
\qquad \text{if $\xi_0$ is big enough.}
\end{equation}

Finally,  exploiting \eqref{stima below}, \eqref{eq:acr1}, \eqref{into2 stima} 
one gets the conclusion if $b<b'$.
When $b=b'$ and $\gamma>\gamma'$, one takes into
account \eqref{stima below}, \eqref{eq:acr1}, \eqref{into2 stimab=},
\eqref{estimate2},  \eqref{estimate4} and \eqref{estimate4 bis} to conclude the proof.
\end{proof}
In the next lemma we analyze the case $b=b'$ and $\gamma= \gamma'$, concluding the extension of Lemma \ref{ACR} useful  in our context.
\begin{lemma}\label{lemma estensione2}
Let $u, v$ be two continuous, positive radial functions such that \eqref{eq:ipodec} is satisfied with $b'=b>0$, $\gamma'=\gamma\in (0,1)$,  $ c, c'>0$ and $a, a' \in \R$. 
Then the following estimate holds
\[ 
\int_{\R^N} u_\xi v \sim 
\abs{\xi}^{\frac{N+1}{2} + a+ a'-\frac\gamma2} e^{-b\abs{\xi} + \tilde c \abs{\xi}^\gamma},\qquad \text{with }\, \tilde c= \left( (c')^{\frac{1}{1-\gamma}} + c^{\frac{1}{1-\gamma}} \right)^{1-\gamma},
\] 
where  $u_{\xi}(x)=u(x-\xi)$. 
\end{lemma}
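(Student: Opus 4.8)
The plan is to follow the architecture of the proof of Lemma \ref{lemma estensione1} as far as possible, and to replace its concluding Step~6 by a genuine Laplace (saddle point) analysis, which is exactly where the new constant $\tilde c$ is produced. First I would observe that Steps~1, 2, 4 and 5 of the previous proof never used the strict inequality $\gamma>\gamma'$: they only require $\gamma,\gamma'\in[0,1)$ together with the common rate $b=b'$, which is our hypothesis here. Hence, keeping the notation \eqref{eq:xizero}, the reduction \eqref{eq:acr1}, combined with \eqref{into2 stimab=} and \eqref{estimate2}, remains valid verbatim and yields the two-sided equivalence
\[
\int_{\R^N}u_\xi v \sim \xi_0^{a}e^{-b\xi_0}\,I_1(\xi_0)+\xi_0^{a'}e^{-b\xi_0}\,I_2(\xi_0),
\]
where
\[
I_1(\xi_0)=\int_{r_0}^{\xi_0/2} r^{\frac{N-1}{2}+a'}e^{c'r^\gamma+c(\xi_0-r)^\gamma}\,dr,\quad I_2(\xi_0)=\int_{r_0}^{\xi_0/2} r^{\frac{N-1}{2}+a}e^{cr^\gamma+c'(\xi_0-r)^\gamma}\,dr.
\]
Here it is essential that every remainder generated along the way (the boundary terms in \eqref{eq:acr1} and the $\{\abs{y}>r\}$ contribution in \eqref{into2 stimab=}) carries an exponential weight $e^{-b\xi_0+c\xi_0^\gamma}$ or $e^{-b\xi_0+c'\xi_0^\gamma}$, so I would first record the elementary inequality $\tilde c=\bigl(c^{1/(1-\gamma)}+(c')^{1/(1-\gamma)}\bigr)^{1-\gamma}>\max\{c,c'\}$, valid because $1-\gamma>0$. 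This guarantees that all such remainders are $o\bigl(e^{-b\xi_0+\tilde c\xi_0^\gamma}\bigr)$ and may be discarded, reducing everything to the asymptotics of $I_1$ and $I_2$.

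The heart of the matter is then the analysis of $I_1$ (that of $I_2$ being identical with $c$ and $c'$ exchanged). Consider the phase $\phi(r)=c'r^\gamma+c(\xi_0-r)^\gamma$. Since $\gamma\in(0,1)$, $\phi$ is strictly concave on $(0,\xi_0)$, and solving $\phi'(r)=0$ gives the unique interior maximum at $r^{*}=\frac{t}{1+t}\xi_0$ with $t=(c'/c)^{1/(1-\gamma)}$; a direct computation yields the key identity $\phi(r^{*})=\tilde c\,\xi_0^\gamma$, which is precisely the origin of the combination rule defining $\tilde c$. Moreover $\abs{\phi''(r^{*})}=K\xi_0^{\gamma-2}$ for an explicit $K>0$, so Laplace's method gives
\[
I_1(\xi_0)\sim C\,(r^{*})^{\frac{N-1}{2}+a'}\,\abs{\phi''(r^{*})}^{-1/2}\,e^{\tilde c\xi_0^\gamma}\sim C\,\xi_0^{\frac{N+1}{2}+a'-\frac\gamma2}\,e^{\tilde c\xi_0^\gamma},
\]
where I used $r^{*}\sim\xi_0$ and $\abs{\phi''(r^{*})}^{-1/2}\sim\xi_0^{1-\gamma/2}$. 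The interior critical point lies in $(r_0,\xi_0/2)$ exactly when $c'<c$, and in that regime $I_1$ dominates; for $I_2$ the corresponding phase is increasing on $[r_0,\xi_0/2]$, its maximum sits at the endpoint $\xi_0/2$, and $I_2\le C\xi_0^{\mathrm{poly}}e^{(c+c')2^{-\gamma}\xi_0^\gamma}=o\bigl(e^{\tilde c\xi_0^\gamma}\bigr)$ since $(c+c')2^{-\gamma}<\tilde c$. Multiplying by the prefactors and adding then gives $\int_{\R^N}u_\xi v\sim C\,\xi_0^{\frac{N+1}{2}+a+a'-\frac\gamma2}e^{-b\xi_0+\tilde c\xi_0^\gamma}$, which is the claim; the roles of $I_1$ and $I_2$ are simply interchanged when $c'>c$.

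The main obstacle I anticipate is twofold. First, making the passage to Laplace's method rigorous requires uniform control of $\phi$ and of the slowly varying factor $r^{\kappa}$ over a window of width $\sim\xi_0^{1-\gamma/2}$ centred at $r^{*}$, together with a matching lower bound obtained by restricting the integral to that window; strict concavity of $\phi$ makes the tail estimates outside the window routine, but slightly delicate near the moving endpoint $\xi_0/2$. Second, and more seriously, the borderline case $c=c'$ is genuinely degenerate: then $t=1$, the critical point $r^{*}=\xi_0/2$ coincides with the endpoint of integration, and both $I_1$ and $I_2$ see only half of the Gaussian peak. In that case I would evaluate the two half-peaks separately, each contributing a half-Gaussian factor still of order $\xi_0^{1-\gamma/2}$, and add them; since here $(c+c')2^{-\gamma}=\tilde c$ exactly, this endpoint value is the true maximum, and the only effect is a change in the multiplicative constant, leaving both the power $\frac{N+1}{2}+a+a'-\frac\gamma2$ and the exponent $\tilde c$ unchanged.
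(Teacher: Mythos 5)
Your proposal is correct and follows essentially the same route as the paper: the same reduction via Steps 1, 2, 4, 5 of Lemma \ref{lemma estensione1}, the same critical point $\hat r = \xi_0/\bigl(1+(c/c')^{1/(1-\gamma)}\bigr)$ with the identity $g(\hat r)=\tilde c\,\xi_0^\gamma$, and the same Gaussian analysis around it (the paper carries out your ``Laplace method'' by hand, via a Taylor expansion with Lagrange remainder and two-sided bounds on $g''$ of order $\xi_0^{\gamma-2}$). Your treatment of the second integral by monotonicity of its phase when $c\neq c'$, and of the degenerate endpoint case $c=c'$ via half-Gaussian peaks, also matches the paper's argument.
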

\begin{remark}\label{rem:c'}
Let us observe that in  Lemma \ref{lemma estensione1}, 
\ref{lemma estensione2}, we assume $c,\,c'> 0$ or $\gamma,\,\gamma'> 0$ as the 
cases $c,\,c'\leq 0$ or $\gamma,\,\gamma'\leq 0$ are already contained in  
Lemma \ref{ACR}. 

Moreover,  the case $c'\geq 0,\, \gamma'>0$ is 
equivalent to our assumption $c'>0,\, \gamma'\geq 0$, and 
 if $b'>b$ the result can be proved as well, by exchanging the role of $b,\,b'$ and $\gamma,\,\gamma'$.
\end{remark}

\begin{proof} 
Let us start proving estimates from above and assuming, without loss of generality, that $c \ge c'>0$. 
As in the proof of Lemma \ref{lemma estensione1}, we use the notation
in \eqref{eq:xizero} with $r_{0}$ such that $1<r_{0}<(1+(c/c')^{1/(1-\gamma)})^{-1}\xi_{0}/2<\xi_{0}/2$.
Steps 1, 2, 4, 5 in the proof of Lemma \ref{lemma estensione1} are still valid,
hence we can take into account 
\eqref{eq:acr1}, \eqref{into2 stimab=}, \eqref{estimate2}, and obtain
\begin{equation}\label{eq:est2uno}
\begin{split}
\int_{\R^N} u_\xi v 
\leq &
C
\xi_0^{a}e^{-b \xi_0}    \int_{r_0}^{\frac{\xi_0}{2}} r^{\frac{N-1}{2} + a'} e^{c'r^{\gamma} + c(\xi_0 - r)^{\gamma}}dr
\\
&
+C\xi_0^{a'}e^{-b \xi_0}  \int_{r_0}^{\frac{\xi_0}{2}} r^{\frac{N-1}{2} + a} e^{cr^{\gamma} + c'(\xi_0 - r)^{\gamma}}dr
+C\xi_0^a e^{-b\xi_{0}+c \xi_0^{\gamma}} .
\end{split}
\end{equation}
However, the estimate of the integrals in the right hand side of \eqref{eq:est2uno}, which in case $\gamma>\gamma'$ corresponds to Step 6 in the proof of Lemma \ref{lemma estensione1}, requires a more accurate analysis when considering the case $\gamma=\gamma'$. 

 Let us define
\begin{equation}\label{eq:defg} 
g(r):=c' r^{\gamma} + c (\xi_0 - r)^{\gamma} 
\end{equation}
and observe that it has a maximum in the interval $[r_0, \xi_0/2]$ in the point 
$\hat r$,
where 
\begin{equation}\label{eq:rhat}
\hat r = \frac{\xi_0}{1+\left(c/c'\right)^{\frac{1}{1-\gamma}}}
\quad\text{and}\quad
g(\hat r)=\xi_0^\gamma \left( (c')^{\frac{1}{1-\gamma}} + c^{\frac{1}{1-\gamma}} \right)^{1-\gamma}. 
\end{equation}
As $c\geq c'$,   $\hat r\leq  \xi_0/2$; then we can write
\begin{equation}\label{ultima divisione} 
\begin{split}
\int_{r_0}^{\xi_0/2} 
r^{\frac{N-1}{2} + a'}e^{c' r^{\gamma} + c (\xi_0 - r)^{\gamma}} 
=&
 \int_{r_0}^{\hat r/2}  r^{\frac{N-1}{2} + a'}e^{c' r^{\gamma} + c (\xi_0 - r)^{\gamma}} \, dr
\\& 
+ \int_{\hat r/2}^{\xi_0/2} 
 r^{\frac{N-1}{2} + a'}e^{c' r^{\gamma} + c (\xi_0 - r)^{\gamma}} \, dr
\end{split} 
\end{equation}
and, noting that $g(r)\leq g(\hat{r}/2)$ in $(r_{0},\hat{r}/2)$, one obtains
\begin{align}\label{stima 6}  
\int_{r_0}^{\hat r/2}  r^{\frac{N-1}{2} + a'}e^{c' r^{\gamma} + c (\xi_0 - r)^{\gamma}} \, dr 
& 
\le e^{g(\hat r/2)} \int_{r_0}^{\hat r/2}  r^{\frac{N-1}{2} + a'}\, dr 
\le C \xi_0^{\frac{N+1}{2} + a'} e^{g(\hat r/2)}.
\end{align}
In order to estimate the second integral in \eqref{ultima divisione} we need to study the behavior of $g$ near $\hat r$. By Taylor expansion at the 
maximum point $\hat r$,  one has
\begin{equation}\label{taylor} 
g(r)=g(\hat r)+\frac12 g''(r_1)(\hat r- r)^2 \end{equation}
where $r_1$ belongs to the interval of extrema $ r$ and $\hat r$ so that
$r_1 \in (\hat{r}/2, \xi_0/2)$.
In addition, 
\begin{equation}\label{eq:gh}
g''(r)=\gamma(\gamma-1)h(r)\qquad \text{where}\qquad h(t)=\frac{c'}{t^{2-\gamma}} + \frac{c}{(\xi_0-t)^{2-\gamma}} 
\end{equation}
and $h(t)$ has a global minimum point at $\hat t$ such that
\[ \frac{\xi_0}{2} \ge \hat t=\frac{\xi_0}{1+\left(c/c'\right)^{\frac{1}{3-\gamma}}} \ge \hat r, \qquad \text{as $c \ge c'$.  }
 \]
Hence, taking into account that $\gamma<1$,  \eqref{eq:gh} together
with \eqref{taylor} yields
\begin{equation}\label{stima g} 
g(r) \le g(\hat r) - \frac12 c_1 \gamma(1-\gamma)  \xi_0^{\gamma-2} (\hat r- r)^2. \end{equation}
Exploiting \eqref{stima g} into the second integral on the right hand side of \eqref{ultima divisione}, one has
\begin{equation}\label{stima 5}   
\begin{split}
\int_{\hat r/2}^{\xi_0/2}  r^{\frac{N-1}{2} + a'}e^{c' r^{\gamma} + c (\xi_0 - r)^{\gamma}} \, dr  
\le &
  e^{g(\hat r)} \int_{\hat r/2}^{\xi_0/2} r^{\frac{N-1}{2} + a'}e^{-\hat c \xi_0^{\gamma-2} (\hat r- r)^2} \, dr
\\
=&e^{g(\hat r)} \int_{\hat r/2}^{\hat r} r^{\frac{N-1}{2} + a'}e^{-\hat c \xi_0^{\gamma-2} (\hat r- r)^2} \, dr
\\
&+e^{g(\hat r)} \int_{\hat r }^{\xi_0/2} r^{\frac{N-1}{2} + a'}e^{-\hat c \xi_0^{\gamma-2} (\hat r- r)^2} \, dr
\end{split}\end{equation}
where $\hat c= \frac12 c_1 \gamma(1-\gamma) $. 
Let us study  the first integral on the right hand side and note that
 \eqref{eq:rhat} implies that there exist two positive constants
 $C_{1}<C_{2}$ such that
  \begin{equation}\label{eq:rstima}
C_{1} \xi_0^{\frac{N-1}{2} + a'} \le  r^{\frac{N-1}{2} + a'} \le C_{2} \xi_0^{\frac{N-1}{2} + a'},
\qquad \forall \, r\in [\hat{r}/2,\xi_{0}/2].
\end{equation}
Performing the change of
variables $w=(\hat c\xi_0^{\gamma-2})^{1/2} (\hat r-r)$, one obtains
\begin{align*} 
\int_{\hat r/2}^{\hat r} r^{\frac{N-1}{2} + a'}e^{-\hat c \xi_0^{\gamma-2} (\hat r- r)^2}\, dr 
&\le 
C_{2} \, 
\xi_0^{\frac{N+1}{2} + a'- \frac \gamma2}
\int_{0}^{\frac{\sqrt{\hat c}}{2} \xi_0^{\gamma/2-1}\hat r} 
e^{- w^2 } \, dw 
\\ 
&\le C\, \xi_0^{\frac{N+1}{2} + a'- \frac \gamma2} \int_0^{+\infty} e^{- w^2 } \, dw \le C \, \xi_0^{\frac{N+1}{2} + a'- \frac \gamma2}. 
\end{align*}
Using again \eqref{eq:rstima} in the second integral and performing  the change of variables $w=(\hat c\xi_0^{\gamma-2})^{1/2} (r-\hat r )$, one deduces that
\[
\int_{\hat r}^{\xi_0/2} r^{\frac{N-1}{2} + a'}e^{-\hat c \xi_0^{\gamma-2} (r- \hat r)^2}\, dr \le C \, \xi_0^{\frac{N+1}{2} + a'- \frac \gamma2},
\]
so that, \eqref{stima 5} becomes
\[
\int_{\hat r/2}^{\xi_0/2}  r^{\frac{N-1}{2} + a'}e^{c' r^{\gamma} + c (\xi_0 - r)^{\gamma}} \, dr
\leq 
C \xi_0^{\frac{N+1}{2} + a'- \frac \gamma2}e^{g(\hat r)}.
\]
Exploiting this last information and \eqref{stima 6} together with the fact that $g(\hat{r}/2)<g(\hat{r})$ into  \eqref{ultima divisione}  one obtains
\[
\int_{r_0}^{\xi_0/2} 
r^{\frac{N-1}{2} + a'}e^{c' r^{\gamma} + c (\xi_0 - r)^{\gamma}} 
\leq
C \xi_0^{\frac{N+1}{2} + a'- \frac \gamma2}e^{g(\hat r)}.
\]
As a consequence, from  \eqref{eq:est2uno},
and taking into account that $g(\hat{r})> c\xi_{0}^{\gamma}$ (thanks to 
\eqref{eq:rhat}),  we deduce that
\begin{equation}\label{eq:est2due}
\begin{split}
\int_{\R^N} u_\xi v 
\leq &
C\xi_0^{\frac{N+1}2+a+a'-\frac{\gamma}2}e^{-b \xi_0+g(\hat{r})}  
%\\&
+C\xi_0^{a'}e^{-b \xi_0}  \int_{r_0}^{\frac{\xi_0}{2}} r^{\frac{N-1}{2} + a} e^{cr^{\gamma} + c'(\xi_0 - r)^{\gamma}}dr,
\end{split}
\end{equation}
for $c\geq c'$.
Notice that, if $c = c'$, in order to study the integral on the right hand side
we can repeat  the argument above exchanging the roles of $a$ and $a'$,
and  we obtain
\begin{equation}\label{eq:stimaabove}
\int_{\R^N} u_\xi v \le 
\xi_0^{\frac{N+1}{2} + a+ a'-\frac\gamma2} e^{-b\xi_0 + g(\hat r)},
\end{equation}
and the proof of estimates from above in this case is complete, recalling \eqref{eq:rhat}. 

On the other hand, if $c>c'$, then we need to estimate differently the integral appearing on the right hand side of \eqref{eq:est2due}. We consider the function
\[ 
\tilde g(r):=g(\xi_0-r)=c r^\gamma + c'(\xi_0-r)^\gamma
\]
and notice that it is increasing  in the interval $[r_0, \xi_0/2]$, so that 
\[ 
\int_{r_0}^{\frac{\xi_0}{2}} r^{\frac{N-1}{2} + a} e^{cr^{\gamma} + c'(\xi_0 - r)^{\gamma}}dr \le C \, e^{\tilde g(\xi_0/2) } 
\int_{r_0}^{\frac{\xi_0}{2}} r^{\frac{N-1}{2} + a}dr. \] 
In addition, $\hat r < \xi_0/2$ as $c > c'$ (see \eqref{eq:rhat}), so that
$ \tilde g(\xi_0/2)=g(\xi_0/2) < g(\hat r)$, as $\hat{r}$ is the maximum point of 
$g$ in $[r_{0},\xi_{0}/2]$.
Hence, integrating one has
\[ \int_{r_0}^{\frac{\xi_0}{2}} r^{\frac{N-1}{2} + a} e^{cr^{\gamma} + c'(\xi_0 - r)^{\gamma}}dr \le C \, \xi_0^{\frac{N+1}{2} +a- \frac \gamma2} e^{g(\hat r) }. \] 
Therefore, taking  into account \eqref{eq:est2due}, one has that
 \eqref{eq:stimaabove}  holds in this case too. 

To conclude the proof, we need to prove the estimate from below. 
We notice that on the interval $[\hat r/2, \hat r]$ the function $h$ given in 
\eqref{eq:gh}, attains its maximum on the boundary. In any case, one has
\[ h(r_1) \le c_2 \xi_0^{\gamma-2}, \quad \text{ where } r_1 \in \left(\hat r/2, \hat r \right), \]
with a suitable constant $c_2>0$.
Exploiting a second order Taylor expansion of the function $g$ as in  \eqref{taylor}
one deduces that
\begin{equation}\label{stima 7}
\begin{split} 
\int_{\hat r/2}^{\hat r} r^{\frac{N-1}{2} + a'}e^{c' r^{\gamma} + 
c (\xi_0 - r)^{\gamma}} dr &\ge 
\int_{\hat r/2}^{\hat r}  r^{\frac{N-1}{2} + a'}e^{g(\hat r) - \frac12 c_2 \gamma(1-\gamma)  \xi_0^{\gamma-2} (\hat r - r)^2 } dr
\\ &= 
e^{g(\hat r)} \int_{\hat r/2}^{\hat r}  r^{\frac{N-1}{2} + a'}e^{- \hat c  \xi_0^{\gamma-2} (\hat r - r)^2 }dr, 
\end{split} \end{equation}
where $\hat c = \frac12 c_2 \gamma(1-\gamma)$.
Taking into account \eqref{eq:rstima} and  performing the change of variable $w=(\hat c\xi_0^{\gamma-2})^{1/2} (\hat r - r) $, yields
\begin{equation}\label{ultima stima} 
\begin{split} \int_{\hat r/2}^{\hat r}  r^{\frac{N-1}{2} + a'}e^{- \hat c  \xi_0^{\gamma-2} (\hat r - r)^2 }dr& 
\geq C
 \xi_0^{\frac{N-1}{2} + a'}
\int_{\hat r/2}^{\hat r} e^{- \hat c  \xi_0^{\gamma-2} (\hat r - r)^2 } 
dr
\\
&\ge C\, \xi_0^{\frac{N+1}{2} + a'- \frac \gamma2} \int_0^{\frac{\sqrt{\hat c}}{2} \xi_0^{\gamma/2-1}\hat r} e^{- w^2 } \, dw 
\\&
 \ge C\, \xi_0^{\frac{N+1}{2} + a'- \frac \gamma2} \int_0^1 e^{- w^2 } \, dw \ge C \, \xi_0^{\frac{N+1}{2} + a'- \frac \gamma2},
\end{split} \end{equation}
for $\xi_{0}$ sufficiently large.
Finally, as $u,\,v$ are positive functions and recalling \eqref{eq:stima per below},
\eqref{eq:rhat}, 
\begin{align*} \int_{\R^N} u_\xi v  
&\ge \int_{r_0}^{\xi_0/2} dr \int_{|y|<r} u_\xi v dy 
\ge \xi_0^a e^{-b \xi_0}  \int_{\hat r/2}^{\hat r}  r^{\frac{N-1}{2} + a'}e^{c' r^{\gamma} + c (\xi_0 - r)^{\gamma}} \, dr,
 \end{align*}
thanks to the fact that $1<r_{0}<(1+(c/c')^{1/(1-\gamma)})^{-1}\xi_{0}/2<\xi_{0}/2$.
This together with \eqref{stima 7}and \eqref{ultima stima},    gives the desired estimate from below and completes the proof. 
\end{proof}

We can now give the asymptotic decay of $\eps_{R}$ introduced in 
\eqref{defeps}.

\begin{lemma}\label{est epsilon2}
For every $z\in \Sigma$, let $\mu(Gz) $ be defined in \eqref{eq:muGz}. The following conclusions hold.
\\
(i) If $p$ satisfies \eqref{p>2}, then
\[ 
\eps_R \sim R^{-\frac{N-1}{2}+2\tau_1} e^{- \mu(Gz) \sqrt{V_{\infty}} R},
\]
where $\tau_{1}$ is introduced in \eqref{decay2}.
\\
(ii) If $p=2$ and $\alpha \in (N-1, N-\frac12]$, then
\[ 
\eps_R \sim R^{-\frac{N-1}{2}+\frac\gamma2+2\tau_2} e^{-\mu(Gz) \sqrt{V_{\infty}} R+ 2^{1-\gamma}  c_{\gamma} (\mu(Gz) R)^{\gamma}},
\]
where $\gamma=\alpha-(N-1)$, $c_\gamma$ and $\tau_{2}$ are given in \eqref{decaypertexp}.
\end{lemma}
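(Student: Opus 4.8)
The starting point is the identity furnished by \eqref{defeps}: since $\omega$ solves \eqref{Choqlimit}, an integration by parts gives
\[
\eps^{ij}_R = \int_{\R^N} \bigl(I_\alpha \ast \omega_{i,R}^p\bigr)\,\omega_{i,R}^{p-1}\,\omega_{j,R}.
\]
I would insist on using \emph{this} representation rather than the bilinear form $\int(\nabla\omega_{i,R}\cdot\nabla\omega_{j,R}+V_\infty\omega_{i,R}\omega_{j,R})$: the gradient and potential pieces each decay like a product of two copies of $\omega$ but cancel at leading order, so only their sum, the nonlinear term above, carries the true asymptotics. Setting $u:=(I_\alpha\ast\omega^p)\,\omega^{p-1}$ and $v:=\omega$, which are continuous, positive and radial, the change of variable $x\mapsto x+Rg_iz$ together with radiality rewrites $\eps^{ij}_R=\int_{\R^N}u_\xi\,v$ with $\xi=R(g_jz-g_iz)$, so that $\abs{\xi}=R\,d_{ij}$, where $d_{ij}=\abs{g_iz-g_jz}$ (see \eqref{omegaR}).

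Next I would read off the decays of $u$ and $v$. By \eqref{decayconv} one has $I_\alpha\ast\omega^p(x)\sim C\abs{x}^{-(N-\alpha)}$, a purely polynomial factor that does not change the exponential rate; multiplying by the asymptotics \eqref{decay2} (resp.\ \eqref{decaypertexp}) of $\omega^{p-1}$ yields, as $\abs{x}\to\infty$,
\[
u(x)\sim \abs{x}^{a}e^{-(p-1)\sqrt{V_\infty}\abs{x}},\qquad v(x)\sim \abs{x}^{-\frac{N-1}{2}+\tau_1}e^{-\sqrt{V_\infty}\abs{x}}
\]
in the range \eqref{p>2}, and $u(x)\sim\abs{x}^{a}e^{-\sqrt{V_\infty}\abs{x}+c_\gamma\abs{x}^\gamma}$, $v(x)\sim\abs{x}^{-\frac{N-1}{2}+\tau_2}e^{-\sqrt{V_\infty}\abs{x}+c_\gamma\abs{x}^\gamma}$ under \eqref{palpha}; in each case $a$ is the explicit exponent obtained by adding the polynomial powers. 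The structural point that dictates the whole proof is that, because the Riesz factor is only polynomial, for $p=2$ the function $u$ inherits the \emph{same} exponential rate $\sqrt{V_\infty}$ as $v$ (and, under \eqref{palpha}, even the same sub-exponential correction $c_\gamma\abs{x}^\gamma$), in sharp contrast with the Schr\"odinger case where the nonlinear term decays strictly faster.

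I would then apply the interaction estimates. Under \eqref{p>2} with $p>2$ the rate $(p-1)\sqrt{V_\infty}>\sqrt{V_\infty}$ and the Ambrosetti--Colorado--Ruiz estimate (Lemma \ref{ACR}) applies in the distinct-rate regime, giving $\eps^{ij}_R\sim\abs{\xi}^{-\frac{N-1}{2}}e^{-\sqrt{V_\infty}\abs{\xi}}$; for $p=2$, $\alpha\le N-1$ the rates coincide and Lemma \ref{ACR} in the equal-rate regime produces the power $-\frac{N-1}{2}+2\tau_1$ (the borderline $\alpha=N-1$, where $\tau_1\neq0$, being precisely the case in which the overlap region, rather than the mass near the centre, dominates). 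Under \eqref{palpha} one is in the equal-rate, equal-correction situation of Lemma \ref{lemma estensione2} with $c=c'=c_\gamma$ and $\gamma'=\gamma$; that lemma yields the correction constant $\tilde c=\bigl(2c_\gamma^{1/(1-\gamma)}\bigr)^{1-\gamma}=2^{1-\gamma}c_\gamma$ and the polynomial power $\tfrac{N+1}{2}+a+a'-\tfrac{\gamma}{2}$. A short computation, using $N-\alpha=1-\gamma$ under \eqref{palpha}, then reduces the exponent to the claimed $-\frac{N-1}{2}+2\tau_1$, resp.\ $-\frac{N-1}{2}+\frac{\gamma}{2}+2\tau_2$, and replaces $\abs{\xi}$ by $R\,d_{ij}$.

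Finally I would sum over $i\neq j$. As every $\eps^{ij}_R$ is positive, $\eps_R=\sum_{i\neq j}\eps^{ij}_R$ is controlled below by a single term and above by the whole sum; the factor $e^{-\sqrt{V_\infty}\,d_{ij}R}$ shows the smallest separation dominates, and $\min_{i\neq j}d_{ij}=\mu(Gz)$ by \eqref{eq:muGz}, the sub-exponential corrections $(\,\cdot\,)^\gamma$ being lower order and not affecting the selection. Replacing $d_{ij}$ by $\mu(Gz)$ and absorbing constants into $\sim$ gives both asymptotics. The main obstacle is the equal-exponential-rate (and, under \eqref{palpha}, equal-correction) analysis forced by the polynomial decay of the Riesz potential: pinning down the precise power, in particular the $\tfrac{\gamma}{2}$ shift, and the precise constant $2^{1-\gamma}c_\gamma$ hinges on the Laplace-type analysis near the maximum of $g(r)=c_\gamma r^\gamma+c_\gamma(\xi_0-r)^\gamma$ carried out in Lemma \ref{lemma estensione2}; the secondary, but essential, point one must not miss is that the symmetric bilinear form has to be traded for the nonlinear representation in order to avoid the spurious cancelling leading term.
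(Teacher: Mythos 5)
Your proposal is correct and takes essentially the same route as the paper: the same convolution representation of $\eps^{ij}_{R}$, the decays \eqref{decayconv}, \eqref{decay2}, \eqref{decaypertexp}, Lemma \ref{ACR} in its distinct-rate and equal-rate regimes for part (i), Lemma \ref{lemma estensione2} with $c=c'=c_\gamma$, $\gamma'=\gamma$ for part (ii), and the reduction of the sum $\eps_R=\sum_{i\neq j}\eps_R^{ij}$ to the pairs realizing $\mu(Gz)=\min_{i\neq j}d_{ij}$. The only differences are cosmetic: you translate the convolution factor rather than $\omega$ (equivalent by radiality and the symmetry of both lemmas), and your remark that the bilinear-form representation would hide a leading-order cancellation between the gradient and potential terms is correct and insightful, though the paper simply uses the nonlinear representation without comment.
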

\begin{remark}
Notice that, for $p=2$ and any $\alpha \in (0, N)$, one can easily give a bound from below on $\eps_R$, which however in general is far from being sharp. One has  
\[
\varepsilon_R^{ij} \ge C R^{-\frac{N-1}{2}} e^{-   d_{i,j} \sqrt{V_{\infty}} R }, \]
see Remark 3.3 in \cite{MaPeScProc} for the the case $d_{i, j}=2$: exactly the same proof also works  for the more general case we are considering here. 

This estimate turns out to be enough in order to consider the case $\ell(G) \ge 3$, and $\beta > \mu_G \sqrt{V_\infty}$, see also \cite{clasal}, as the leading term in the asymptotic 
analysis is the linear part in the exponential. On the other hand, in other cases, and in particular if $p=2$, $\alpha \ge N-1$, exponential and polynomial corrections turn out to be relevant as well, and a more careful analysis is needed. Lemmas \ref{ACR},  \ref{lemma estensione1}, \ref{lemma estensione2} will be crucial. 
\end{remark}
\begin{proof}
Recalling \eqref{defeps} and performing a change of variable one obtains
\[
\eps^{ij}_{R}=\int_{\R^N} (I_{\alpha}\ast\omega^{p})(x)\omega^{p-1}(x)\omega(x-R(g_{j}z-g_{i}z))dx.
\]
We are going to apply Lemma \ref{ACR} with 
\begin{equation}\label{choiceAR}
v=I_{\alpha} \ast \omega^p \omega^{p-1}, \qquad u=\omega, \qquad \xi_{ij}=R(g_i z - g_j z), \; \text{and $|\xi_{ij}|=Rd_{ij}$},
 \end{equation}
 where $d_{ij}$ is introduced in Remark \ref{dij}.
If $p>2$, one takes into account  \eqref{decay2} and  \eqref{decayconv} to deduce that $u$ and $v$  
satisfy the assumptions of Lemma \ref{ACR} with $a=-\frac{N-1}{2}$, $b=\sqrt{V_{\infty}}
$,  $a'=-(p-1)\frac{N-1}{2}-N+\alpha$ and  $b'=(p-1)\sqrt{V_{\infty}}$. 
Since $b < b'$, it results
\[
\eps^{ij}_{R}\sim e^{-\sqrt{V_{\infty}}d_{ij}R}R^{-\frac{N-1}2}, \qquad \text{as $R\to \infty$.}
\]
Then, observing that, by definition,  $\mu(Gz)\leq d_{ij}$ 
and  it is achieved (see Lemma \ref{le:sigmamu} and Remarks \ref{re:muG}, \ref{dij}),  \eqref{defeps} yields the conclusion. 

When $p=2$, it follows that $b=b'$. 
Furthermore, if $\alpha<N-1$, \eqref{decayconv} and 
\eqref{decay2} still hold  so that the conclusion follows as in the case $p>2$.

When  $\alpha=N-1$,  one takes into account
 \eqref{decay2} and obtains   $a=\frac{\nu}
{2}\sqrt{V_{\infty}} -\frac{N-1}{2}$, and $a'=a-1$, so that, $a'<a$ and  
$a'>-\frac{N+1}{2}$,  as  $\nu >0$. Then, the proof of the 
first conclusion is completed observing that 
\( a+a'+\frac{N+1}{2}=   \nu \sqrt{V_{\infty}}-\frac{N-1}{2}\) and applying   Lemma \ref{ACR}.
\\
In order to prove the second conclusion, we perform the same choice as \eqref{choiceAR}. As before $b=b'$, but  Lemma \ref{ACR} cannot be applied,
as it does not include decay such as \eqref{decaypertexp}.
We can instead exploit  Lemma \ref{lemma estensione2} with 
$a=-\frac{N-1}{2}+\tau_2$, $a'= a - N +\alpha$,  $\gamma'=\gamma$, $c=c'=c_\gamma$.
\end{proof}

All the estimates above hold for any $z \in \Sigma$. In order to  compare the asymptotic decay of the potential integral term with $\varepsilon_R$, we need to choose a suitable $z$.
From now on, taking into account Lemma \ref{le:sigmamu}, we fix   
$z \in \Sigma$ such that 
\begin{equation}\label{eq:z}
\mu_G=\mu(Gz), 
\end{equation} 
where $\mu_G$ and $\mu(Gz)$ are defined in \eqref{eq:muGz}, \eqref{defmug}.
%We now choose $z \in \Sigma$ in a suitable way, depending on the the decay of $V$. Precisely, let $\beta$ be the exponent appearing in \eqref{VgeqN-1} or \eqref{VgreatN-1}, and recall the definition of $\mu_G$ in Section \ref{setting}. 
%\begin{itemize}
%\item[(i)] If $p \ge 2$, and $\beta > \mu_G \sqrt{V_\infty}$, then we choose $z$ such that 
%\[ \mu(Gz)\sqrt{V_\infty} \in [\mu_G \sqrt{V_\infty}, \beta). \]
%Then $\beta>\mu(Gz) \sqrt{V_\infty}$.
%\item[(ii)] If $\beta=\mu_G\sqrt{V_\infty}$, and $\mu_G$ is attained, we just take $z$ such that $\mu(Gz)=\mu_G$. Then, $\beta=\mu(Gz) \sqrt{V_\infty}$. 
%\end{itemize}
%\textcolor{red}{Se $\mu_G$ è sempre attained, mi basta scegliere $z$ che realizza $\mu_G$, indipendentemente da $p$ e $\beta$.}

\begin{lemma}\label{estimatesV2}
Let $\eps_{R}$ be defined in \eqref{defeps} and $\mu_{G}$ be introduced in 
\eqref{defmug}. Moroever, let $z$ be fixed such that \eqref{eq:z} holds.
Assume  \eqref{p>2} and  \eqref{VgeqN-1} or  \eqref{palpha} and \eqref{VgreatN-1}.
Then it results
\[ \mathcal{A}_V:=\int_{\mathbb{R}^N} \left(V(x) - V_{\infty}\right)\left(\chi_{R, z}\right)^2 \le  o(\varepsilon_R), \qquad \text{as $R\to +\infty$.}
\]
\end{lemma}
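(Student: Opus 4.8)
The plan is to reduce $\mathcal{A}_V$ to a single radial convolution integral and then compare its decay with that of $\varepsilon_R$ provided by Lemma \ref{est epsilon2}. Since $(\chi_{R,z})^2\ge 0$, multiplying the pointwise bound \eqref{VgeqN-1} (resp.\ \eqref{VgreatN-1}) on $V-V_\infty$ by $(\chi_{R,z})^2$ and integrating gives
\[
\mathcal{A}_V\le A_0\int_{\R^N}(1+\abs{x})^\sigma e^{-\beta\abs{x}}(\chi_{R,z})^2\,dx
\]
(with $e^{-\beta\abs{x}+c'\abs{x}^{\gamma'}}$ in the second case). Recalling \eqref{defsigma} and the elementary inequality $\big(\sum_{i=1}^{\ell(G)}\omega_{i,R}\big)^2\le \ell(G)\sum_{i=1}^{\ell(G)}\omega_{i,R}^2$, the off-diagonal contributions are absorbed into the diagonal ones; moreover, since the weight is radial and each $\omega_{i,R}^2=\omega^2(\cdot-Rg_iz)$ is a translate of the radial function $\omega^2$ by a vector of norm $\abs{Rg_iz}=R$ (because $z\in\Sigma\subset S^{N-1}$ and $g_i$ is an isometry), all diagonal integrals coincide. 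Hence $\mathcal{A}_V\le A_0\ell(G)^2 D_R$ with
\[
D_R:=\int_{\R^N}(1+\abs{x})^\sigma e^{-\beta\abs{x}}\,\omega^2(x-\xi)\,dx,\qquad \abs{\xi}=R,
\]
and it suffices to prove $D_R=o(\varepsilon_R)$.

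I would then read $D_R$ as the value at separation $\abs{\xi}=R$ of the convolution of the positive radial functions $\tilde u\sim\abs{x}^\sigma e^{-\beta\abs{x}}$ and $w:=\omega^2$. Under \eqref{p>2} and \eqref{VgeqN-1}, estimate \eqref{decay2} gives $w\sim\abs{x}^{-(N-1)+2\tau_1}e^{-2\sqrt{V_\infty}\abs{x}}$, so I apply Lemma \ref{ACR} with rates $b=\beta$ and $b'=2\sqrt{V_\infty}$ and compare with $\varepsilon_R\sim R^{-\frac{N-1}{2}+2\tau_1}e^{-\mu_G\sqrt{V_\infty}R}$ from Lemma \ref{est epsilon2}(i). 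If $\beta>\mu_G\sqrt{V_\infty}$, then (using $\mu_G\le 2$) the leading exponential of $D_R$ is strictly faster than that of $\varepsilon_R$, so $D_R=o(\varepsilon_R)$ for every $\sigma$. At the threshold $\beta=\mu_G\sqrt{V_\infty}$ two subcases occur: if $\mu_G<2$ then $b<b'$ and the slow tail dominates, $D_R\sim R^\sigma e^{-\mu_G\sqrt{V_\infty}R}$, forcing $\sigma<-\frac{N-1}{2}+2\tau_1$; if $\mu_G=2$ then $\beta=2\sqrt{V_\infty}$, i.e.\ $b=b'$, and the transverse Gaussian integration inside Lemma \ref{ACR} produces an extra polynomial factor which, once compared with $\varepsilon_R$, forces $\sigma<-1$. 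These two requirements are exactly those collected in $\sigma<\min\{-1,-\frac{N-1}{2}+2\tau_1\}$ of \eqref{eq:sigma1}.

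Under \eqref{palpha} and \eqref{VgreatN-1} the same scheme applies with $\tilde u\sim\abs{x}^\sigma e^{-\beta\abs{x}+c'\abs{x}^{\gamma'}}$ and, by \eqref{decaypertexp}, $w\sim\abs{x}^{-(N-1)+2\tau_2}e^{-2\sqrt{V_\infty}\abs{x}+2c_\gamma\abs{x}^\gamma}$; here Lemma \ref{ACR} is replaced by Lemmas \ref{lemma estensione1} and \ref{lemma estensione2} (and by Remark \ref{rem:c'} when $c'=0$ or $\gamma'=0$), and the comparison is with $\varepsilon_R\sim R^{-\frac{N-1}{2}+\frac\gamma2+2\tau_2}e^{-\mu_G\sqrt{V_\infty}R+2^{1-\gamma}c_\gamma\mu_G^\gamma R^\gamma}$ of Lemma \ref{est epsilon2}(ii). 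Case (i) of Theorem \ref{mainthm4} is immediate because the linear exponential dominates. In case (ii) ($\beta=\mu_G\sqrt{V_\infty}$, $\gamma'<\gamma$) Lemma \ref{lemma estensione1} shows that the subexponential correction surviving in $D_R$ is either $c'R^{\gamma'}$ (if $\mu_G<2$, where $b<b'$) or the correction $2c_\gamma R^\gamma$ carried by $w$ (if $\mu_G=2$, where $b=b'$ and $\gamma>\gamma'$); in the former the $\varepsilon_R$-correction of order $R^\gamma$ strictly dominates, in the latter the corrections coincide and the more negative polynomial power of $D_R$ decides, so $D_R=o(\varepsilon_R)$ for arbitrary $c',\sigma$. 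In case (iii) ($\beta=\mu_G\sqrt{V_\infty}$, $\gamma'=\gamma$, $\mu_G<2$) one has $b<b'$ and Lemma \ref{lemma estensione1} gives $D_R\sim R^\sigma e^{-\mu_G\sqrt{V_\infty}R+c'R^\gamma}$: if $c'<2^{1-\gamma}c_\gamma\mu_G^\gamma$ the $\varepsilon_R$-correction wins at exponential order (any $\sigma$), while if $c'=2^{1-\gamma}c_\gamma\mu_G^\gamma$ the corrections cancel and a polynomial comparison forces $\sigma<-\frac{N-1}{2}+\frac\gamma2+2\tau_2$.

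The main obstacle, and the reason the extension Lemmas \ref{lemma estensione1}--\ref{lemma estensione2} are indispensable, is precisely the threshold regime $\beta=\mu_G\sqrt{V_\infty}$, in which the subexponential corrections of $\tilde u$ and $w$ are of the same order and a careful Laplace-type analysis of the transverse integration is required. The hypothesis $\mu_G<2$ in case (iii) is essential here: were $\mu_G=2$ together with $\gamma'=\gamma$, one would have $b=b'$ and $\gamma=\gamma'$ simultaneously, and Lemma \ref{lemma estensione2} would yield for $D_R$ the correction coefficient $\tilde c=\big((c')^{1/(1-\gamma)}+(2c_\gamma)^{1/(1-\gamma)}\big)^{1-\gamma}$, strictly larger than the coefficient $2c_\gamma$ appearing in $\varepsilon_R$, so that $D_R$ would fail to be negligible — in agreement with the sharpness of our decay assumptions.
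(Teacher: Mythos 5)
Your proposal is correct in substance and relies on the same machinery as the paper (reduction to integrals of translates of $\omega^2$ against the potential weight, then Lemma \ref{ACR} and Lemmas \ref{lemma estensione1}--\ref{lemma estensione2}, compared with the decay of $\eps_R$ from Lemma \ref{est epsilon2}), but the execution differs in one structural point. You keep the \emph{true} decay of $\omega^{2}$, namely the rate $2\sqrt{V_\infty}$ with correction $2c_\gamma\abs{x}^{\gamma}$, and you split into sub-cases according to $\beta\lessgtr 2\sqrt{V_\infty}$ and $\mu_G\lessgtr 2$. The paper instead, in the regime $\beta\ge\mu_G\sqrt{V_\infty}$ of Theorem \ref{mainthm2} and in case $\beta>\mu_G\sqrt{V_\infty}$ of Theorem \ref{mainthm4}, first replaces $\omega^2$ by the weaker envelope $C\abs{x}^{-N+1+2\tau_1}e^{-\mu(Gz)\sqrt{V_\infty}\abs{x}}$ (resp.\ \eqref{eq:u2}), legitimate since $\mu(Gz)\le 2$; a single application of Lemma \ref{ACR} (resp.\ Lemma \ref{lemma estensione1}) with $b=\mu(Gz)\sqrt{V_\infty}\le\beta=b'$ then yields a bound whose exponential (and subexponential correction) automatically match those of $\eps_R$, while the polynomial gain $R^{-N+1+2\tau_1}$ versus $R^{-\frac{N-1}{2}+2\tau_1}$ closes all cases at once. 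Your route is actually sharper when $\mu_G<2$ (at threshold it only needs $\sigma<-\frac{N-1}{2}+2\tau_1$, not the full minimum in \eqref{eq:sigma1}), at the price of extra bookkeeping; in the threshold cases of Theorem \ref{mainthm4} the two proofs essentially coincide, and your closing discussion of why $\mu_G<2$ is needed in case (iii) matches Remark \ref{rmk:condizioniottimali}.

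One step is stated too loosely to stand as written. When $\beta>\mu_G\sqrt{V_\infty}$ you claim that ``the leading exponential of $D_R$ is strictly faster than that of $\eps_R$'', hence $D_R=o(\eps_R)$ for every $\sigma$. This reasoning fails exactly when $\mu_G=2$: then $\beta>2\sqrt{V_\infty}$, and Lemma \ref{ACR}(i) (or Lemma \ref{lemma estensione1} with the roles of the two functions exchanged, as in Remark \ref{rem:c'}) gives $D_R\sim R^{-(N-1)+2\tau_1}e^{-2\sqrt{V_\infty}R}$, respectively $D_R\sim R^{-(N-1)+2\tau_2}e^{-2\sqrt{V_\infty}R+2c_\gamma R^{\gamma}}$, whose exponential --- and, in the setting of Theorem \ref{mainthm4}, also the correction, since $2^{1-\gamma}c_\gamma\mu_G^{\gamma}=2c_\gamma$ --- is \emph{equal} to that of $\eps_R$, not faster. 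The conclusion still holds, but only through the polynomial comparison $-(N-1)+2\tau_1<-\frac{N-1}{2}+2\tau_1$ (resp.\ $-(N-1)+2\tau_2<-\frac{N-1}{2}+\frac{\gamma}{2}+2\tau_2$), a mechanism you invoke elsewhere, so the fix is immediate; the same caveat applies to your ``immediate'' disposal of case (i) of Theorem \ref{mainthm4}. Relatedly, in the threshold case of Theorem \ref{mainthm2} with $\mu_G=2$, the bookkeeping of Lemma \ref{ACR}(ii) produces, depending on whether $\sigma$ and $-N+1+2\tau_1$ lie above or below $-\frac{N+1}{2}$, either the constraint $\sigma<-1$ or the constraint $\sigma<-\frac{N-1}{2}+2\tau_1$; your phrase ``forces $\sigma<-1$'' records only the first, though the hypothesis $\sigma<\min\left\{-1,-\frac{N-1}{2}+2\tau_1\right\}$ of \eqref{eq:sigma1} covers both, so no actual damage is done.
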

\begin{proof} 
Let us first assume that  $p$ satisfies \eqref{p>2}, and $V$ satisfies \eqref{VgeqN-1}.
As in the proof of Lemma \ref{estimatesV} we first observe that
\[ 
\int_{\mathbb{R}^N} \left(V(x) - V_{\infty}\right) \omega_{i, R}^2 \le  C \int_{\mathbb{R}^N}  \abs{x}^\sigma e^{-\beta \abs{x}} \omega_{i, R}^2. 
\]
Take
\[ u=\omega^2, \quad v=  \abs{x}^\sigma e^{-\beta \abs{x}}
\quad \xi_{i}=Rg_i z,\; \text{with }  |\xi_{i}|=R\; \text{ for every $i=1,\dots \ell(G)$.}
 \]
Observe that \eqref{decay2} together with the fact that $\mu(Gz) \le 2$ implies that $u$ satisfies the following upper bound
\[
u \le C e^{-\mu(Gz)\sqrt{V_{\infty}}\abs{x}} \abs{x}^{-N+1}.
\] 
Let us first assume that  $\beta$ given in  \eqref{VgeqN-1}  is such that
$\beta > \mu_G \sqrt{V_{\infty}}= \mu(Gz)\sqrt{V_{\infty}}$, due \eqref{eq:z}. 
We apply Lemma \ref{ACR}  with 
\begin{equation}\label{choicecoeff} 
a=-N+1+2\tau_1, \quad b=\mu(Gz) \sqrt{V_{\infty}}, \qquad a'=\sigma, \quad b'=\beta, \end{equation}
where  $\tau_1$ is given in \eqref{decay2}. Hence, 
 $\mathcal{A}_V$ satisfies 
 \[
\mathcal{A}_V\leq  e^{-\mu(Gz)\sqrt{V_{\infty}}R } R^{-N+1+2\tau_1}
 \]
 and  Lemma \ref{est epsilon2} implies that this is 
 $o(\varepsilon_R)$. 

If $\beta=\mu_G \sqrt{V_{\infty}}=\mu(Gz)\sqrt{V_{\infty}}$, then we have as before 
\eqref{choicecoeff} with $b'=b$ and we apply conclusion (ii) in Lemma \ref{ACR}. Thus, if $\sigma < a=-N+1+2\tau_1$, it holds
\[ \mathcal{A}_V \le
\begin{cases}
e^{-\mu(Gz) \sqrt{V_{\infty}}R } R^{-N+1+2\tau_1+\sigma+ \frac{N+1}{2}} & \text{ if } \sigma >- \frac{N+1}{2}\\
e^{-\mu(Gz) \sqrt{V_{\infty}}R } R^{-N+1+2\tau_1} \log R & \text{ if } \sigma=- \frac{N+1}{2}\\
e^{-\mu(Gz) \sqrt{V_{\infty}}R } R^{-N+1+2\tau_1} & \text{ if } \sigma <- \frac{N+1}{2}.
\end{cases} \]
These estimates  and Lemma \ref{est epsilon2} show that $\mathcal{A}_V=o(\eps_{R})$ when $\sigma <  a$. 
An analogous argument can be performed when $\sigma \ge a$, yielding the first 
conclusion.

Let us now assume that \eqref{palpha} and \eqref{VgreatN-1} hold.
%We preliminary notice that 
%\[ \int_{\mathbb{R}^N} (V(x) - V_{\infty}) \omega_{i, R}^2 \le  C \int_{\mathbb{R}^N}  \abs{x}^\sigma e^{-\beta \abs{x}+c' \abs{x}^{\gamma'}} \omega_{i, R}^2. \]
In this case we take
\[ u=\omega^2 ,\quad  v=  \abs{x}^\sigma e^{-\beta \abs{x}+c' \abs{x}^{\gamma'}},
\quad  \xi_{i}=Rg_i z,\; \text{with } |\xi_{i}|=R .
\]
Note that, \eqref{decaypertexp} implies
\begin{align}
\label{eq:u1}
 u &\sim C \abs{x}^{-N+1+2\tau_2} e^{-2\sqrt{V_\infty} \abs{x} + 2c_\gamma \abs{x}^\gamma} 
 \\
\label{eq:u2}
&\le C \abs{x}^{-N+1+2\tau_2} e^{-\mu(Gz) \sqrt{V_\infty} \abs{x} + 2^{1-\gamma} c_\gamma \mu(Gz)^\gamma \abs{x}^\gamma} .
\end{align}
If $\beta > \mu_G \sqrt{V_\infty}= \mu(Gz) \sqrt{V_\infty}$,  
we apply Lemma \ref{lemma estensione1} with
\[ 
a=-N+1+2\tau_2, \; b=\mu(Gz) \sqrt{V_\infty},\; c=2^{1-\gamma} c_\gamma \mu(Gz)^\gamma \quad  a'=\sigma, \; b'=\beta.
\] 
Thus,  ${\mathcal A}_{V}=o(\eps_R)$, taking into account Lemma
\ref{est epsilon2} and recalling that $\gamma>0$.

Let now $\beta=\mu_G\sqrt{V_\infty}= \mu(Gz) \sqrt{V_\infty}$.
The case $\mu_{G}<2$,  is taken into account both in conclusions
(ii) and (iii) of Theorem \ref{mainthm4}, and we will handle it at the same time:
we take into consideration \eqref{eq:u1} and
we  apply Lemma \ref{lemma estensione1}   with
\[ 
a=-N+1+2\tau_2,\; b=2 \sqrt{V_\infty},\;  c=2c_\gamma,
\quad a'=\sigma, \; b'=\beta,   \]
yielding 
\({\mathcal A}_{V} \le C\, R^{\sigma} e^{-\mu_G\sqrt{V_\infty} R + c' R^{\gamma'}}.\)
Then Lemma \ref{est epsilon2} and \eqref{VgreatN-1} yield the conclusion if 
either $\gamma'<\gamma$  or $\gamma'=\gamma$ 
and $c'<2^{1-\gamma}c_{\gamma}\mu_{G}^{\gamma}$ or
$\gamma'=\gamma$ , $c'=2^{1-\gamma}c_{\gamma}\mu_{G}^{\gamma}$
and $\sigma$ satisfies the hypotheses in conclusion (iii) in Theorem \ref{mainthm4}.
In the last case $\beta=\mu_G \sqrt{V_\infty}$ and $\mu_G=2$, 
 \eqref{VgreatN-1} lead us to assume the hypotheses in conclusion (ii) namely
 $\gamma'<\gamma$, then 
 Lemma \ref{lemma estensione1} implies
\[ 
{\mathcal A}_{V} \le C\, R^{-N+1+2\tau_2} e^{-2\sqrt{V_\infty} R + 2c_\gamma R^\gamma}, \]
Then, one deduces that 
${\mathcal A}_{V}=o(\varepsilon_R)$  exploiting Lemma 
\ref{est epsilon2},  using that $2^{1-\gamma}c_{\gamma}(\mu_{G})^{\gamma}=2c_{\gamma}$ as $\mu_{G}=2$ and recalling that $\gamma>0$.
 \end{proof}

\begin{remark}\label{rmk:condizioniottimali}
An inspection of the proofs above provides examples of potentials 
 not satisfying our assumptions and for which the associated 
integral term $\mathcal{A}_V$ is  not $o(\varepsilon_R)$. 

In particular, take $V(x)=V_\infty + \abs{x}^\sigma e^{-\beta \abs{x}+c' \abs{x}^{\gamma'}}$, $\mu_G=2$, 
$\gamma'=\gamma>0$ and $c'>0$. In this case, by Lemma \ref{lemma estensione2}, one gets 
\[\int_{\mathbb{R}^N} (V(x) - V_{\infty}) \omega_{i, R}^2 \sim C\,  R^{\frac{3-N}{2} + \sigma -\frac \gamma 2+2\tau_2} e^{-2 \sqrt{V_\infty}R + \tilde c R^\gamma}, \]
where $\tilde c= \left((c')^{\frac{1}{1-\gamma}} + (2c_\gamma)^{\frac{1}{1-\gamma}} \right)^{1-\gamma}$. 
On the other hand, from Lemma \ref{est epsilon2} we deduce that $\eps_{R}$ decays as follows
\[
\eps_{R}\sim R^{-\frac{N-1}2+\frac{\gamma}2+2\tau_{2}}e^{-\mu(Gz)\sqrt{V_{\infty}}R+2^{1-\gamma}c_{\gamma}(\mu(Gz)R)^{\gamma}}.
\]
Notice that $\mu_G=2$ implies $\mu(Gz)=2$ for any $z \in \Sigma$. Hence we need to take into account the exponential correction and as it holds
$\tilde c > 2 c_\gamma$ for any choice of $c'>0$,  
we deduce that $\mathcal{A}_V$ is not $o(\varepsilon_R)$. 
\end{remark}

%%%%%%%%%%%%%%%%%%%%%%%%%%%%%%%%%%%%
\subsection{Proof of Theorem \ref{mainthm2} and \ref{mainthm4}}\label{proofs2} 
We will follow the same strategy of Theorem \ref{mainthm1}. Here, the Nehari manifold is $C^1$, as  $\mathcal{I}_V$ is $C^2$ if $p \ge 2$. Moreover, we point out that conclusions (1) and (2) of Lemma \ref{Nehari homeo} are still true in the setting $p \ge 2$. 

The analog of Proposition \ref{prop:somma} will be the following 
\begin{proposition}\label{prop:somma2}
If $p \ge 2$, then 
\begin{equation}\label{eq:p>21}
 \int_{\mathbb{R}^N}   \left(I_{\alpha} \ast \chi^p_{R, 
z} \right)\chi^p_{R, z} \ge \sum_{i=1}^{\ell(G)} \int_{\R^N} (I_\alpha \ast \omega_{i, R}^p) \omega_{i, R}^p + 2(p-1) \eps_R. 
\end{equation}
For $p=2$ a sharper estimate holds: 
\begin{equation}\label{eq:p>22}
 \int_{\mathbb{R}^N} \left(I_{\alpha} \ast \chi_{R, z}^2\right)\chi_{R, z}^2 
\ge \sum_{i=1}^{\ell(G)}  \int_{\mathbb{R}^N} (I_{\alpha} \ast \omega_{i, R}^2)  \omega_{i, R}^2
+ 4 \varepsilon_R. 
\end{equation}
\end{proposition}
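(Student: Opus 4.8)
The plan is to treat the nonlocal term as a symmetric positive bilinear form and combine its monotonicity with a sharp pointwise Bernoulli-type inequality. Writing $B(f,g):=\int_{\R^N}(I_{\alpha}\ast f)g=\iint_{\R^N\times\R^N}I_{\alpha}(\zeta-\theta)f(\zeta)g(\theta)\,d\zeta\,d\theta$, the kernel $I_{\alpha}$ is strictly positive, so $B$ is symmetric and, on nonnegative functions, monotone in each slot: $f\ge\tilde f\ge0$ and $g\ge\tilde g\ge0$ force $B(f,g)\ge B(\tilde f,\tilde g)$. In this notation the claim reads $B(\chi_{R,z}^p,\chi_{R,z}^p)\ge\sum_i B(\omega_{i,R}^p,\omega_{i,R}^p)+2(p-1)\eps_R$, and by \eqref{defeps} each interaction is $\eps_R^{ij}=B(\omega_{i,R}^p,\omega_{i,R}^{p-1}\omega_{j,R})$, while $\eps_R=\sum_{i\ne j}\eps_R^{ij}$.

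The first step is to establish, for $p\ge2$ and nonnegative $a_1,\dots,a_{\ell(G)}$, the pointwise inequality
\begin{equation}
\Big(\sum_i a_i\Big)^p\ \ge\ \sum_i a_i^p+(p-1)\sum_{i\ne j}a_i^{p-1}a_j.
\tag{$\star$}
\end{equation}
I would prove $(\star)$ by induction on the number of terms. The base case is the two-variable estimate $(a+b)^p\ge a^p+b^p+(p-1)(a^{p-1}b+ab^{p-1})$, a routine one-variable computation (normalize $t=b/a$ and check $h(t):=(1+t)^p-1-t^p-(p-1)(t+t^{p-1})\ge0$ on $[0,1]$), which is an identity when $p=2$. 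For the inductive step, group $b=\sum_{i\le n}a_i$ and $c=a_{n+1}$, apply the two-term inequality and the inductive hypothesis; the only leftover term is absorbed by the superadditivity $b^{p-1}=\big(\sum_{i\le n}a_i\big)^{p-1}\ge\sum_{i\le n}a_i^{p-1}$, valid since $p-1\ge1$.

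Applying $(\star)$ with $a_i=\omega_{i,R}$ bounds $\chi_{R,z}^p$ from below by the nonnegative function $F:=\sum_i\omega_{i,R}^p+(p-1)\sum_{i\ne j}\omega_{i,R}^{p-1}\omega_{j,R}$, so monotonicity of $B$ in both arguments gives $B(\chi_{R,z}^p,\chi_{R,z}^p)\ge B(F,F)$. Expanding $B(F,F)$ and discarding the manifestly nonnegative contributions, namely the off-diagonal squares $B(\omega_{i,R}^p,\omega_{k,R}^p)$ with $i\ne k$, the term $(p-1)^2 B\big(\sum_{i\ne j}\omega_{i,R}^{p-1}\omega_{j,R},\,\sum_{k\ne l}\omega_{k,R}^{p-1}\omega_{l,R}\big)$, and the cross terms $B(\omega_{i,R}^p,\omega_{k,R}^{p-1}\omega_{l,R})$ with $k\ne i$, one retains $\sum_i B(\omega_{i,R}^p,\omega_{i,R}^p)$ together with the genuine interactions $B(\omega_{i,R}^p,\omega_{i,R}^{p-1}\omega_{l,R})=\eps_R^{il}$. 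The factor $2$ from the symmetry of $B$ in $B(F,F)$ and the coefficient $(p-1)$ from $(\star)$ combine to produce exactly $2(p-1)\eps_R$, which is \eqref{eq:p>21}.

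For the sharper bound \eqref{eq:p>22} I would use that $(\star)$ is an identity when $p=2$, so $\chi_{R,z}^2=\sum_i\omega_{i,R}^2+\sum_{i\ne j}\omega_{i,R}\omega_{j,R}$ and $B(\chi_{R,z}^2,\chi_{R,z}^2)$ can be expanded exactly with no loss. The gain is that now, for $i\ne k$, both $B(\omega_{i,R}^2,\omega_{i,R}\omega_{l,R})=\eps_R^{il}$ and $B(\omega_{i,R}^2,\omega_{k,R}\omega_{i,R})=\eps_R^{ik}$ are interaction terms, because $\omega_{i,R}^{p-1}=\omega_{i,R}$; keeping both the $k=i$ and the $l=i$ families of cross terms (disjoint subsets of $\{(k,l):k\ne l\}$) doubles the count and yields $4\eps_R$ after again discarding the nonnegative remainder. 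The main obstacle is pinning down the correct constant $p-1$ in $(\star)$ uniformly in the number of terms, the naive constant $p$ on all cross terms already failing for two terms at $p=2$; once $(\star)$ and the positivity of $B$ are secured, the rest is bookkeeping of which cross terms collapse to $\eps_R^{ij}$, the case $p=2$ being special precisely because there $(\star)$ is saturated and $\omega_{i,R}^{p-1}=\omega_{i,R}$.
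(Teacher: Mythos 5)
Your proof is correct, but it is worth recording how it sits relative to the paper's: the paper does not argue this proposition at all, it simply quotes Lemma 5.3 of \cite{clasal} for \eqref{eq:p>21} and invokes ``direct computations'' (as in \cite{MaPeScProc}) for \eqref{eq:p>22}. Your argument is a self-contained reconstruction of exactly that cited mechanism: your pointwise inequality $(\star)$ is, for two summands, precisely the Bernoulli-type formula (5.2) of \cite{clasal} on which their Lemma 5.3 rests, and your $p=2$ expansion is the ``direct computation''. The structural ingredients are all sound: monotonicity of $B(f,g)=\iint I_\alpha(\zeta-\theta)f(\zeta)g(\theta)\,d\zeta\,d\theta$ in each nonnegative slot; the induction step via superadditivity of $x\mapsto x^{p-1}$; keeping only the $k=i$ family of cross terms to get $2(p-1)\eps_R$ for $p>2$ (you rightly discard the $l=i$ family, since $B(\omega_{i,R}^p,\omega_{k,R}^{p-1}\omega_{i,R})$ is not an $\eps_R^{ij}$ from \eqref{defeps} unless $p=2$); and, for $p=2$, keeping both disjoint families $k=i$ and $l=i$, which is exactly where the saturation of $(\star)$ and the identity $\omega_{i,R}^{p-1}=\omega_{i,R}$ buy the improvement of the constant from $2(p-1)=2$ to $4$. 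What your route buys over the paper's is transparency: it shows the $p=2$ gain is purely a bookkeeping phenomenon, which is what the paper's own remark after the proposition alludes to.

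The one place where you compress genuine content is the base case $h(t)=(1+t)^p-1-t^p-(p-1)(t+t^{p-1})\ge 0$ on $[0,1]$: it is true, but calling it routine hides the only real work in the whole argument (it is the substance of (5.2) in \cite{clasal}). To close it, it suffices to show $h'\ge 0$ on $[0,1]$, i.e. $p\left[(1+s)^{p-1}-s^{p-1}\right]\ge (p-1)+(p-1)^2s^{p-2}$. For $p\ge 3$, the mean value theorem gives $(1+s)^{p-2}-s^{p-2}\ge (p-2)s^{p-3}$, so the derivative of the left-hand side dominates that of the right-hand side, and the values at $s=0$ compare correctly ($p\ge p-1$). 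For $2\le p<3$, concavity of $x\mapsto x^{p-2}$ gives $(1+s)^{p-1}-s^{p-1}=\int_0^1(p-1)(s+x)^{p-2}\,dx\ge \tfrac{p-1}{2}\left[s^{p-2}+(1+s)^{p-2}\right]$, and the claim reduces to $\tfrac p2(1+s)^{p-2}-1\ge\tfrac{p-2}{2}s^{p-2}$, which holds because the left-hand side is at least $\tfrac{p-2}{2}$ while the right-hand side is at most $\tfrac{p-2}{2}$. With that inserted, your proof is complete.
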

\begin{proof}
The first statement is an immediate consequence of \cite[Lemma 5.3]{clasal}, 
whereas the second one follows by direct computations, see also 
\cite{MaPeScProc}. 
\end{proof}
\begin{remark}
The inequality proved in Proposition \ref{prop:somma2} for $p>2$ is  not 
consistent with the case $p=2$. This is because \eqref{eq:p>21} lies on
an algebraic inequality of Bernoulli's type (see formula (5.2) in \cite{clasal}),
while \eqref{eq:p>22} is obtained by direct computations.
We believe that it would be possible to improve \eqref{eq:p>21}
following the argument of \cite[Lemma 2.2]{acclpa}.
\end{remark}
In order to prove our existence results the following estimate will be crucial. 
\begin{proposition}\label{estimatesIV2}
Let $z$ be fixed in \eqref{eq:z}. Assume \eqref{p>2} and  \eqref{VgeqN-1} or  \eqref{palpha} and \eqref{VgreatN-1}.
Then, the following  inequality holds 
\[ \mathcal{I}_V(T(\chi_{R, z}) \chi_{R, z}) \le 
\begin{cases}
 \ell(G) c^G_{\infty} -\frac{p-2}{2p} \eps_{R}+o(\eps_{R}), & \text{if $p>2$},
 \medskip\\
 \ell(G) c^G_{\infty} -\frac12 \eps_{R}+o(\eps_{R}), & \text{if $p=2$},
\end{cases}
\]
as $R\to +\infty$ and 
where $T(\chi_{R, z}) $ is defined in Lemma \ref{Nehari homeo}.
\end{proposition}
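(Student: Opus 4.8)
The plan is to follow verbatim the scheme of Proposition~\ref{estimatesIV}, simply replacing the two ingredients that were specific to $p<2$ by their $p\ge 2$ counterparts, namely Lemma~\ref{estimatesV2} in place of Lemma~\ref{estimatesV} and Proposition~\ref{prop:somma2} in place of Proposition~\ref{prop:somma}. First I would invoke Conclusion~(1) of Lemma~\ref{Nehari homeo}, which is still valid for $p\ge 2$, to write $T_R:=T(\chi_{R,z})$ explicitly as in \eqref{eq:T}, that is $T_R^{2p-2}=\|\chi_{R,z}\|_V^2\big/\int_{\R^N}(I_\alpha\ast\chi_{R,z}^p)\chi_{R,z}^p$, and substitute this value into \eqref{eq:defI} to obtain the reduced expression
\[
\mathcal{I}_V(T_R\chi_{R,z})=\Big(\tfrac12-\tfrac1{2p}\Big)\frac{\big(\|\chi_{R,z}\|_V^2\big)^{\frac{p}{p-1}}}{\big[\int_{\R^N}(I_\alpha\ast\chi_{R,z}^p)\chi_{R,z}^p\big]^{\frac1{p-1}}}.
\]

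Next I would bound numerator and denominator separately. For the numerator, expanding $\|\chi_{R,z}\|_V^2$ as in the proof of Proposition~\ref{estimatesIV} but using Lemma~\ref{estimatesV2} to control the potential contribution $\mathcal A_V$ as $o(\eps_R)$, together with the identity in \eqref{defeps} for the cross terms, gives $\|\chi_{R,z}\|_V^2\le a+\eps_R+o(\eps_R)$, where $a:=\sum_{i}\|\omega_{i,R}\|^2=\ell(G)\|\omega\|^2$. For the denominator I would use Proposition~\ref{prop:somma2}, recalling that $\omega\in\mathcal N_\infty^G$ yields $\sum_i\int_{\R^N}(I_\alpha\ast\omega_{i,R}^p)\omega_{i,R}^p=\ell(G)\|\omega\|^2=a$, so that $\int_{\R^N}(I_\alpha\ast\chi_{R,z}^p)\chi_{R,z}^p\ge a+2(p-1)\eps_R$ when $p>2$ and $\ge a+4\eps_R$ when $p=2$.

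I would then insert both estimates into the reduced expression and Taylor-expand through $(a+t)^s=a^s+s\,a^{s-1}t+o(t)$, observing that the right-hand side is increasing in the numerator and decreasing in the denominator, so the two one-sided bounds combine consistently into an upper bound. A short computation, using $c_\infty^G=(\tfrac12-\tfrac1{2p})\|\omega\|^2$ so that the leading order equals $\ell(G)c_\infty^G$, produces in the case $p>2$ the first-order coefficient $\tfrac{p-1}{2p}\cdot\tfrac{2-p}{p-1}=-\tfrac{p-2}{2p}$, and in the case $p=2$, thanks to the sharper factor $4$ rather than $2(p-1)=2$, the coefficient $-\tfrac12$.

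The algebra itself is routine; the only delicate point worth emphasising is that at $p=2$ one must rely on the sharper bound \eqref{eq:p>22} and not merely on the general $p\ge 2$ inequality \eqref{eq:p>21}. Indeed, setting $p=2$ in $-\tfrac{p-2}{2p}$ would give a vanishing first-order correction, which would be insufficient to derive the strict inequality $c_V^G<\ell(G)c_\infty^G$ needed in the proofs of Theorems~\ref{mainthm2} and \ref{mainthm4}; the extra factor in \eqref{eq:p>22} is exactly what renders the $\eps_R$-correction strictly negative. All the genuinely hard asymptotic work, i.e.\ the sharp decay of $\eps_R$ in Lemma~\ref{est epsilon2} and the smallness of $\mathcal A_V$ in Lemma~\ref{estimatesV2}, has already been carried out, so no further estimate on the interaction integrals is required here.
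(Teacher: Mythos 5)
Your proposal is correct and follows essentially the same route as the paper's proof: the explicit formula for $T_R$ from Lemma \ref{Nehari homeo}, the numerator bound $\|\chi_{R,z}\|_V^2 \le a + \eps_R + o(\eps_R)$ via Lemma \ref{estimatesV2} and \eqref{defeps}, the denominator bound via Proposition \ref{prop:somma2} with $b_p = 2(p-1)$ or $4$, and the Taylor expansion yielding the coefficient $-\frac{b_p-p}{2p}\eps_R$. Your closing remark on why the sharper estimate \eqref{eq:p>22} is indispensable at $p=2$ (otherwise the first-order correction vanishes) is exactly the right observation and matches the role this estimate plays in the paper.
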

\begin{proof}
Let us first notice that, following Conclusion (1) of Lemma \ref{Nehari homeo}, it is easy to obtain that $T_{R}:=T(\chi_{R, z}) $ is given by
\[
T_{R}^{2p-2}
=
\frac{\|\chi_{R, z}\|^{2}_{V}}{\dys\int_{\mathbb{R}^N}   \left(I_{\alpha} \ast \chi^p_{R, 
z} \right)\chi^p_{R, z}}.
\]
Therefore, taking into account \eqref{normainfty}, \eqref{defeps}, Proposition \ref{prop:somma2}, Lemma \ref{estimatesV2} and that $\omega_{i}$ is a solution of Problem \eqref{Choqlimit} it results
\[\begin{split}
\mathcal{I}_{V}(T_R \chi_{R, z})
&=
\left(\frac12-\frac1{2p}\right)T_{R}^{2}\|\chi_{R,z}\|_{V}^{2}
%\\
=%&
\left(\frac12-\frac1{2p}\right)\frac{(\|\chi_{R,z}\|_{V}^{2})^{\frac{p}{p-1}}}{\left[\dys\int_{\mathbb{R}^N}   \left(I_{\alpha} \ast \chi^p_{R, 
z} \right)\chi^p_{R, z}\right]^{\frac1{p-1}}}
\\
&\leq
\left(\frac12-\frac1{2p}\right)
\dfrac{\left[\dys \sum_{i=1}^{\ell(G)}\|\omega_{i,R}\|^{2}+\eps_{R}+o(\eps_{R})
\right]^{\frac{p}{p-1}}}{
\left[\dys \sum_{i=1}^{\ell(G)}\|\omega_{i,R}\|^{2}
+b_p \eps_{R}\right]^{\frac{1}{p-1}}},
\end{split}
\]
where
\[ b_p=\begin{cases}
2(p-1) &\text{ if } p>2 \\
4 &\text{ if } p=2.
\end{cases} \]
Notice that $b_p >p$ for any $p \ge 2$. 
Using the expansion $(a+t)^{\alpha}=a^{\alpha}+\alpha a^{\alpha-1}t+o(t)$  and the notation $a:= \sum  \|\omega_{i,R}\|^{2}$, we get
\[\begin{split}
\mathcal{I}_{V}(T_R \chi_{R, z})
&
\le \left(\frac12-\frac1{2p}\right)
\left[a+\eps_{R}+o(\eps_{R})\right]^{\frac{p}{p-1}}
\left[a+b_p\eps_{R}\right]^{-\frac{1}{p-1}}
\\&
=\left(\frac12-\frac1{2p}\right)\left[
a^{\frac{p}{p-1}}+\frac{p}{p-1}a^{\frac1{p-1}}\eps_{R}+o(\eps_{R})\right]
\left[a^{-\frac{1}{p-1}}-\frac{b_p}{p-1}a^{-\frac{p}{p-1}}\eps_{R}+o(\eps_{R})\right]
\\
&=
\left(\frac12-\frac1{2p}\right)\left[
a- \eps_{R}\frac{b_p-p}{p-1} +o(\eps_{R})\right]
=\ell(G)c^G_{\infty}-\frac{b_p -p}{2p}\eps_{R}+o(\eps_{R}),  
\end{split}
\]
where the last equality comes from the fact that $c^{G}_{\infty}=\left(\frac12-\frac1{2p}
\right) \|\omega_{i,R}\|^{2}$.
\end{proof}

We now prove our main results in case $p \ge 2$. 
\begin{proof}[Proof of Theorems \ref{mainthm2} and \ref{mainthm4}]
 Let us take a minimizing sequence for 
$c_V^G$ and exploit  Ekeland's Variational Principle \cite{eke} to construct a 
minimizing sequence which is  also a  Palais-Smale for ${\mathcal I}_{V}$ restricted on 
$\mathcal{N}^G_V$, then arguing as in Corollary 3.2 in \cite{clasal1} 
(see also Lemma 2.2 and Lemma 2.5 in 
\cite{ClappMaia}) we obtain a subsequence $u_{n}$ which is a Palais-Smale 
sequence in the whole $H^{1}_{G}$. 

Take $z$ satisfying \eqref{eq:z},  exploit Proposition \ref{estimatesIV2} to
 apply Proposition 3.1 in 
\cite{CingolaniClappSecchi} and deduce that $u_{n}$  is compact.
Then, there exists $u \in \mathcal{N}^G_V$ 
such that $\mathcal{I}_V(u)=c_V^G$. Also $\abs{u} \in \mathcal{N}^G_V$ and
$c_{V}=\mathcal{I}_{V}(u)=\mathcal{I}_{V}(|u|)$ so that we can take $u$ positive. 

Hence, we have a $G$-invariant positive solution. 
\end{proof}
%\section{Appendix}
\appendix \section{Technical Lemma }\label{app:lemma}

\begin{lemma}[Lemma 4.1 in \cite{ClappMaia2}]\label{lemma cm}
Let $u, v : \mathbb{R}^N \to \mathbb{R}$ be two continuous functions such that
\[ u(x) \le C(1+\abs{x})^a, \quad v(x) \le C(1+\abs{x})^{a'}  \]
as $\abs{x} \to \infty$, where $a, a'<0$ such that $a+a'<-N$. Let $\xi \in \mathbb{R}^N $ such that $\abs{\xi} \to \infty$. We denote $u_{\xi}(x)=u(x-\xi)$. Then the following asymptotic estimate holds:
\[ \int_{\mathbb{R}^N} u_{\xi} v\le C \abs{\xi}^{\tau} \]
where $\tau = \max \{ a, a', a+a' +N\}<0$. 
\end{lemma}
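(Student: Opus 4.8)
The plan is to reduce everything to the pure polynomial weights. By continuity the hypotheses upgrade to the global bounds $u(x)\le C(1+\abs{x})^a$ and $v(x)\le C(1+\abs{x})^{a'}$ for \emph{all} $x$ (enlarging $C$, using $a,a'<0$ so that $(1+\abs{x})^{a}$ is bounded below on compacta), so that
\[
\int_{\R^N} u_\xi v \le C \int_{\R^N} (1+\abs{x-\xi})^a (1+\abs{x})^{a'}\,dx.
\]
I would then split $\R^N=A\cup B\cup C$ according to the position of $x$ relative to the two centres $0$ and $\xi$, with $A=\{\abs{x}\le\abs{\xi}/2\}$, $B=\{\abs{x-\xi}\le\abs{\xi}/2\}$, and $C$ the complement, and estimate each piece.

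On $A$ one has $\abs{x-\xi}\ge\abs{\xi}-\abs{x}\ge\abs{\xi}/2$, so the factor $(1+\abs{x-\xi})^a\le C\abs{\xi}^a$ can be pulled out, leaving $\int_{\abs{x}\le\abs{\xi}/2}(1+\abs{x})^{a'}\,dx$; in polar coordinates this is $O(\abs{\xi}^{a'+N})$ when $a'+N>0$ and $O(1)$ when $a'+N<0$, so $A$ contributes $O(\abs{\xi}^{\max\{a,\,a+a'+N\}})$. The region $B$ is handled identically with the roles of $a$ and $a'$ interchanged, yielding $O(\abs{\xi}^{\max\{a',\,a+a'+N\}})$. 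Both exponents are $\le\tau$.

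The delicate region is $C$, where both $\abs{x}>\abs{\xi}/2$ and $\abs{x-\xi}>\abs{\xi}/2$: here pulling out a single factor leaves a possibly divergent integral, so one must use the full decay $a+a'<-N$. The hard part will be precisely this, and the resolution is to split $C$ by comparing the two distances, setting $E^+=\{x:\abs{x-\xi}\ge\abs{x}\}$ and $E^-=\{x:\abs{x-\xi}<\abs{x}\}$ as in the decomposition \eqref{eq:E+} used earlier in the paper. On $C\cap E^+$ one has $(1+\abs{x-\xi})^a\le(1+\abs{x})^a$ (since $a<0$), hence the integrand is dominated by $(1+\abs{x})^{a+a'}$; integrating over $\{\abs{x}\ge\abs{\xi}/2\}$ and using $a+a'+N<0$ gives $O(\abs{\xi}^{a+a'+N})$. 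Symmetrically, on $C\cap E^-$ one bounds the integrand by $(1+\abs{x-\xi})^{a+a'}$ and integrates over $\{\abs{x-\xi}\ge\abs{\xi}/2\}$, again obtaining $O(\abs{\xi}^{a+a'+N})$.

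Collecting the three contributions yields $\int_{\R^N}u_\xi v\le C\abs{\xi}^\tau$ with $\tau=\max\{a,a',a+a'+N\}$, which is negative precisely because $a,a'<0$ and $a+a'<-N$. The only further technical point is the borderline exponent $a'+N=0$ (or $a+N=0$), where the radial integral on $A$ (resp.\ $B$) produces a logarithmic factor; this is harmless, since it only affects a subdominant term and can be absorbed by enlarging $\tau$ by an arbitrarily small amount, or simply excluded as these equalities do not occur in the applications of the lemma.
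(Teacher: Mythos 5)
The paper itself contains no proof of this lemma: it is stated in the appendix and quoted verbatim from \cite{ClappMaia2} (Lemma 4.1 there), so there is nothing in-paper to compare your argument against, and I judge it on its own merits. Your argument is the standard proof of such interaction estimates and its core is sound: the upgrade to global polynomial bounds by continuity, the split into $A=\{\abs{x}\le\abs{\xi}/2\}$, $B=\{\abs{x-\xi}\le\abs{\xi}/2\}$ and the complement $C$, pulling the far factor out of the integrals over $A$ and $B$, and the bisector split $E^{\pm}$ on $C$, where the integrand is dominated by $(1+\abs{x})^{a+a'}$ or $(1+\abs{x-\xi})^{a+a'}$ and the strict inequality $a+a'+N<0$ gives convergence and the bound $O(\abs{\xi}^{a+a'+N})$. (One remark that applies equally to the statement you were handed: to multiply the two upper bounds pointwise you need $u,v\ge 0$, or bounds on $\abs{u},\abs{v}$; with signed functions the statement is false as written. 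In all applications in this paper the integrands are nonnegative, so this is only a matter of phrasing.)

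The one point you get wrong is the characterization of the borderline case as ``subdominant''. If, say, $a'=-N$, then $a+a'+N=a$ and $\tau=\max\{a,a'\}$; when $a\ge a'$ the logarithm produced by region $A$ multiplies precisely the dominant term $\abs{\xi}^{a}=\abs{\xi}^{\tau}$, so it cannot be absorbed. In fact the lemma as stated is \emph{false} at this borderline: take $u(x)=(1+\abs{x})^{-N}$ and $v(x)=(1+\abs{x})^{a'}$ with $-N<a'<0$ (so $a+a'<-N$ and $\tau=a'$); on $\{\abs{x-\xi}\le\abs{\xi}/2\}$ one has $(1+\abs{x})^{a'}\ge c\abs{\xi}^{a'}$, hence
\[
\int_{\R^N}u_\xi v \;\ge\; c\,\abs{\xi}^{a'}\int_{\abs{y}\le\abs{\xi}/2}(1+\abs{y})^{-N}\,dy \;\ge\; c'\,\abs{\xi}^{a'}\log\abs{\xi},
\]
which is not $O(\abs{\xi}^{\tau})$. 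So no proof can close that case, and of your two remedies only the second is the right one: either weaken the conclusion to $C_\eps\abs{\xi}^{\tau+\eps}$, or exclude $a=-N$ and $a'=-N$. The exclusion is indeed what matters for this paper: in the application in Lemma \ref{est epsilon} one has $a=a'=-\frac{N-\alpha}{2-p}$, and in Lemma \ref{estimatesV} one has $a=-\beta$, $a'=-2\frac{N-\alpha}{2-p}$; hypothesis \eqref{pless2} forces $\frac{N-\alpha}{2-p}>N$ and \eqref{Vless2} forces $\beta>\frac{N-\alpha}{2-p}$, so in both cases $a,a'<-N$ strictly, regions $A$ and $B$ contribute $O(\abs{\xi}^{a})$ and $O(\abs{\xi}^{a'})$ with no logarithmic correction, and your argument goes through exactly as written.
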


\begin{lemma}[Lemma 3.7 in \cite{AmbrosettiColoradoRuiz}]\label{ACR}
Let $u, v : \mathbb{R}^N \to \mathbb{R}$ be two positive continuous radial functions such that
\[ u(x) \sim \abs{x}^a e^{-b\abs{x}}, \quad v(x) \sim \abs{x}^{a'} e^{-b'\abs{x}}  \]
as $\abs{x} \to \infty$, where $a, a' \in \mathbb{R}$, and $b, b' >0$. Let $\xi \in \mathbb{R}^N $ such that $\abs{\xi} \to \infty$. We denote $u_{\xi}(x)=u(x-\xi)$. Then the following asymptotic estimates hold:
\begin{itemize}
\item[(i)] If $b < b'$,
\[ \int_{\mathbb{R}^N} u_{\xi} v\sim e^{-b\abs{\xi}} \abs{\xi}^{a}. \]
A similar expression holds if $b > b'$, by replacing $a$ and $b$ with $a'$ and $b'$. 
\item[(ii)] If $b=b'$, suppose that $a \ge a'$. Then:
\[ \int_{\mathbb{R}^N} u_{\xi} v\sim
\begin{cases}
e^{-b\abs{\xi}} \abs{\xi}^{a+a'+\frac{N+1}{2}} & \text{ if } a' > -\frac{N+1}{2},\\
e^{-b\abs{\xi}} \abs{\xi}^{a} \log |\xi| & \text{ if } a' = -\frac{N+1}{2},\\
e^{-b\abs{\xi}} \abs{\xi}^{a}& \text{ if } a' < -\frac{N+1}{2}.
\end{cases} \]
\end{itemize}
\end{lemma}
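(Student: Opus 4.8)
The plan is to prove Lemma~\ref{ACR} by a Laplace-type analysis of $\int_{\R^N} u_\xi v=\int_{\R^N} u(x-\xi)\,v(x)\,dx$, whose integrand behaves, up to polynomial prefactors, like $e^{-\Phi(x)}$ with $\Phi(x)=b\abs{x-\xi}+b'\abs{x}$. Exploiting radiality I would fix $\xi=(\xi_0,0,\dots,0)$, $\xi_0=\abs{\xi}\to\infty$, and write $x=(r,y)\in\R\times\R^{N-1}$, so that $\abs{x}=\sqrt{r^2+\abs{y}^2}$ and $\abs{x-\xi}=\sqrt{(\xi_0-r)^2+\abs{y}^2}$. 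The dichotomy between the two conclusions is governed by the location of the minimum of $\Phi$: on the segment joining $0$ and $\xi$ one has $\Phi=b\xi_0+(b'-b)r$, so for $b<b'$ the minimum is the single point $x=0$ with value $b\xi_0$, whereas for $b=b'$ the value $b\xi_0$ is attained along the whole segment, and the integral concentrates differently in the two situations.

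For conclusion (i), with $b<b'$, the mass concentrates near $x=0$. For the upper bound I would split $\R^N$ into a fixed ball $B_\delta(0)$ and its complement. On $B_\delta(0)$ the factor $v$ is bounded and, Taylor expanding $\abs{x-\xi}=\xi_0-x_1+O(\abs{x}^2/\xi_0)$, one gets $u_\xi(x)\sim\xi_0^a e^{-b\xi_0}e^{bx_1}$; since $b<b'$ the integral $\int_{\R^N} v(x)e^{bx_1}\,dx$ converges and yields the leading order $C\,\xi_0^a e^{-b\xi_0}$. On the complement the strict inequality $b<b'$ gives an extra exponentially small gain, e.g.\ from $\Phi\ge b\xi_0+\tfrac{b'-b}{2}\abs{x}$ off a neighbourhood of the origin, so that part is negligible. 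The matching lower bound is obtained by restricting the integral to $B_1(0)$, where $v\ge c>0$ and $u_\xi\gtrsim\xi_0^a e^{-b\xi_0}$, exactly as in the lower bound step of Lemma~\ref{lemma estensione1}.

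For conclusion (ii), with $b=b'$, the minimum of $\Phi$ is degenerate along the segment and the integral concentrates on a thin tube around it. Here I would expand, for $\abs{y}$ small compared with $r$ and with $\xi_0-r$, the radii as $\abs{x}\approx r+\abs{y}^2/(2r)$ and $\abs{x-\xi}\approx(\xi_0-r)+\abs{y}^2/(2(\xi_0-r))$, so that the transverse integration is Gaussian,
\[
\int_{\R^{N-1}}e^{-\frac b2\left(\frac1r+\frac1{\xi_0-r}\right)\abs{y}^2}\,dy\sim C\left(\frac{r(\xi_0-r)}{\xi_0}\right)^{\frac{N-1}{2}},
\]
reducing the problem to the one-dimensional asymptotics
\[
\int_{\R^N} u_\xi v\sim C\,e^{-b\xi_0}\,\xi_0^{-\frac{N-1}{2}}\int_\delta^{\xi_0-\delta} r^{\,a'+\frac{N-1}{2}}(\xi_0-r)^{\,a+\frac{N-1}{2}}\,dr.
\]
The three regimes then come from an endpoint (Beta-function) analysis of this last integral after the rescaling $r=\xi_0 s$: if $a'>-\tfrac{N+1}{2}$ both exponents exceed $-1$, the rescaled integral converges to a Beta function and one obtains $\xi_0^{a+a'+\frac{N+1}{2}}$; the borderline $a'=-\tfrac{N+1}{2}$ turns the integrand into $\sim r^{-1}$ near $r=0$ and produces the factor $\log\abs{\xi}$; while $a'<-\tfrac{N+1}{2}$ makes the lower endpoint integrable, so the bulk dominates and gives $\xi_0^a$.

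I expect the main obstacle to be the sharp one-dimensional analysis in conclusion (ii): one must check that the tubular Gaussian approximation is uniform in $r$ on compact subintervals away from the endpoints, estimate the two caps $r<\delta$ and $r>\xi_0-\delta$ where the quadratic expansion degenerates, and establish a matching lower bound by restricting to the tube and using a second-order Taylor estimate of $\Phi$, in the spirit of the final steps of Lemma~\ref{lemma estensione2}. The borderline case $a'=-\tfrac{N+1}{2}$, where the logarithm emerges, is the most delicate point and must be isolated carefully.
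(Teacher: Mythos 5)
Your proposal is correct, and it is essentially the same approach that underlies this statement in the paper: note that the paper never proves Lemma~\ref{ACR} itself (it is quoted from Lemma 3.7 of \cite{AmbrosettiColoradoRuiz}), but its two generalizations, Lemmas~\ref{lemma estensione1} and \ref{lemma estensione2}, are proved by exactly the Laplace-type scheme you outline: fix $\xi=(\xi_0,0,\dots,0)$, split off the regions near $0$ and near $\xi$, fold by the symmetry $r\mapsto \xi_0-r$, separate $\abs{y}>r$ from $\abs{y}<r$, and carry out the transverse Gaussian integration via two-sided quadratic bounds on $\abs{x}$ and $\abs{\xi-x}$ (Steps 2, 4, 5 there). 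The one place where you genuinely deviate is the one-dimensional integral at the end: you rescale $r=\xi_0 s$ and read the three regimes off a Beta-function endpoint analysis, which is a clean and correct way to obtain the exponent $a+a'+\frac{N+1}{2}$, the $\log\abs{\xi}$ at the borderline $a'=-\frac{N+1}{2}$, and the regime $\abs{\xi}^a$ when $a'<-\frac{N+1}{2}$ (where, as you note, the fixed cap near the origin contributes at the same order, so it must be kept); the paper's own one-dimensional analysis in Lemma~\ref{lemma estensione2} looks different only because the exponential corrections there move the maximum to an interior point $\hat r$. The technical checks you flag as remaining — uniformity of the Gaussian approximation for $\delta$ large but fixed, the caps contributing $O(\abs{\xi}^{a}e^{-b\abs{\xi}})$ and $O(\abs{\xi}^{a'}e^{-b\abs{\xi}})$, and the matching lower bound via a second-order Taylor expansion of $\Phi$ restricted to the tube — all go through by the same devices as in Steps 1, 4 and 5 of Lemma~\ref{lemma estensione1}, so there is no gap in the plan, only routine work left to write out.
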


	%%%%%%%%%%%%%%%%%%%%%%%%%%%%%%%%%%%%%%%%%%%%%%%%%%%%%%%%%%
%%%%%%%%%%%%%%%%%%%%%%%%%%%%%%%%%%%%%%%%%%%%%%%%%%%%%%%%%%%%%%
\section*{Acknowledgments}
The authors wish to thank M\'onica Clapp for inspiring conversations.


\begin{thebibliography}{99}

\bibitem{Ackermann} N. Ackermann, On a periodic Schr\"odinger equation with nonlocal superlinear part, \textit{Math. Z.} \textbf{248} (2004), 423--443. 

\bibitem{acclpa} N. Ackermann, M. Clapp and F. Pacella, 
\emph{Alternating sign multibump solutions of nonlinear
elliptic equations in expanding tubular domains},
Comm. Partial differential equations \textbf{38}, no. 5
(2013), 751--779.

\bibitem{AmbrosettiColoradoRuiz} A. Ambrosetti, E. Colorado, D. Ruiz, Multi-bump solitons to linearly coupled systems of nonlinear Schr\"odinger equations, \textit{Calc. Var.} \textbf{30} (2007), 85--112. 

\bibitem{BahriLi} A. Bahri, Y.Y. Li, \emph{On a min-max procedure for the 
existence of a positive solution for certain scalar field equations
in $\R^{N}$}   Rev. Mat. Iberoamericana \textbf{6}, no. 1/2,
(1990), 1--15. 

\bibitem{BerLions} H. Berestycki, P.-L. Lions, Nonlinear scalar field equations. I. Existence of a ground state. \textit{Arch. Rational Mech. Anal.} \textbf{82} (1983), no. 4, 313--345.

\bibitem{CingolaniClappSecchi} S. Cingolani, M. Clapp, S. Secchi, Multiple solutions to a magnetic nonlinear Choquard equation \textit{Z. Angew. Math. Phys.} \textbf{63} (2012), 233--248.

\bibitem{ClappMaia} M. Clapp, L. Maia, A positive bound state for an asymptotically linear or superlinear
Schr\"odinger equation, \textit{J. Differential Equations} \textbf{260} (2016), 3173--3192.

\bibitem{ClappMaia2} M. Clapp, L. Maia, Existence of a positive solution to a nonlinear scalar field equation with zero mass at infinity, \textit{Adv. Nonlinear Stud.} \textbf{18} (2018), 745--762. 

\bibitem{ClappMaiaPellacci} M. Clapp, L. Maia, B. Pellacci, Positive multipeak solutions to a zero mass problem in exterior domains, \textit{Communications in Contemporary Mathematics} \textbf{23}, (2019), 1950062 (22 pp.). 
\bibitem{clasal} M. Clapp, D. Salazar, Positive and sign changing solutions to a nonlinear Choquard equation, \textit{J. Math. Anal. Appl.} \textbf{407} (2013), 1--15. 


\bibitem{clasal1} M. Clapp, D. Salazar, Multiple Sign Changing Solutions of Nonlinear Elliptic Problems in Exterior Domains, \textit{Advanced Nonlinear Studies}, \textbf{12}, (2012), 427--443.

\bibitem{DAveniaMederskiPomponio} P. d'Avenia, J. Mederski, A. Pomponio, Nonlinear scalar field equation with competing nonlocal term, preprint arXiv:2010.13184. 

\bibitem{eke} I. Ekeland, On the variational principle, \textit{J. Math. Anal. Appl.} \textbf{47} (1974), pp. 324--353.

\bibitem{FQT} P. Felmer, A. Quaas, Jinggang Tan, Positive solutions of nonlinear Schr\"odinger equation with the fractional Laplacian, \textit{Proc. Roy. Soc. Edinburgh Sect. A} \textbf{142} (6) (2012) 1237--1262.


\bibitem{GhimentiVanSchaft} M. Ghimenti, J. Van Schaftingen, 
Nodal solutions for the Choquard equation, \textit{ J. Funct. Anal.} \textbf{271} (2016), 107--135. 


\bibitem{Lieb} E.H. Lieb, Existence and uniqueness of the minimizing solutions of Choquard's nonlinear equation, \textit{Studies in Appl. Math.} \textbf{57} (1976/77), 93--105.

\bibitem{Lions} P.L. Lions, The concentration-compactness principle in the calculus of variations. The locally compact case. I. \textit{Ann. Inst. H. Poincaré Anal. Non Linéaire} \textbf{1} (1984), 109--145. 

\bibitem{MaZhao} L. Ma, L. Zhao, Classification of positive solitary solutions of the nonlinear Choquard equation, \textit{Arch. Ration. Mech. Anal.} \textbf{195} (2010), 455--467. 

\bibitem{MaPeScProc} L. Maia, B. Pellacci, D. Schiera, Positive bound states to the nonlinear Choquard equations in the presence of nonsymmetric potentials, 
\textit{Minimax theory and applications} (2) \textbf{7} (2022).

\bibitem{MorozVanSchaftJFPTA} V. Moroz, J. Van Schaftingen, A guide to the Choquard equation, \textit{Journal of Fixed Point Theory and Applications} \textbf{19} (2017), 773--813.

\bibitem{MorozVanSchaftJDE} V. Moroz, J. Van Schaftingen, Nonexistence and optimal decay of supersolutions to Choquard equations in exterior domains, \textit{J. Differential Equations} \textbf{254} (2013), 3089--3145.

\bibitem{MorozVanSchaftJFA} V. Moroz, J. Van Schaftingen, Groundstates of nonlinear Choquard equations: existence, qualitative properties and decay asymptotics, \textit{J. Funct. Anal.} \textbf{265} (2013), 153--184.

\bibitem{pal} R.S. Palais, The principle of symmetric criticality,  \textit{Comm. Math. Phys.} \textbf{69} (1979), 19--30. 

\bibitem{Pekar} S. Pekar, Untersuchung über die Elektronentheorie der Kristalle, 
Akademie Verlag, Berlin, 1954.

\bibitem{Riesz}  M. Riesz, L’int\'egrale de Riemann–Liouville et le probl\`eme de Cauchy, 
\textit{Acta Math.} \textbf{81} (1949) 1--223.

\bibitem{struwe} 
M. Struwe, A global compactness result for elliptic boundary value problems involving limiting nonlinearities, \textit{Math Z.} \textbf{187} (1984), 511--517.

\bibitem{SzulkinWeth} A. Szulkin, T. Weth, Ground state solutions for some indefinite variational problems, \textit{J. Funct. Anal.} \textbf{257} (2009), 3802--3822. 

\bibitem{VanShaftXia} J. Van Schaftingen, J. Xia, Choquard equations under
confining external potentials, \textit{Nonlinear Differ. Equ. Appl. } 
 (2017), 1--24. DOI 10.1007/s00030-016-0424-8

\bibitem{WangQuXiao} J. Wang, M. Qu, L. Xiao, Existence of positive solutions to the nonlinear Choquard equation with competing potentials, \textit{Electr. J. Differ. Equ.} \textbf{63} (2018), 1--21. 

\bibitem{wangyi}  T. Wang, T. Yi, \emph{Uniqueness of positive solutions of
the Choquard type equations}, \textit{Applicable Analysis}, \textbf{96} (2017),
409--417.

\bibitem{Xiang} C. L. Xiang, Uniqueness and nondegeneracy of ground states for Choquard equations in three dimensions, \textit{Calc. Var. Partial Differential Equations} \textbf{55} (2016), Art. 134, 25 pp. 


\end{thebibliography}
\end{document}